\newtheorem{theo}{Theorem}[section]
\newtheorem{lemma}[theo]{Lemma}
\newtheorem{corollary}[theo]{Corollary}
\newtheorem{prop}[theo]{Proposition}
\newcommand {\ZZ} {\mathbb {Z}}
\newcommand {\CC} {\mathbb {C}}
\newcommand {\QQ} {\mathbb {Q}}
\newcommand {\NN} {\mathbb {N}}
\newcommand{\Cal}{\mathcal}
\newcommand{\nsimeq}{\hbox{$\simeq${\raise0.3ex\hbox{\kern -0.55em ${\scriptscriptstyle /}$}}\ }}
\renewcommand{\ss}{\mathfrak{s}}
\newcommand{\kk}{\mathfrak{k}}
\newcommand{\hh}{\mathfrak{h}}
\renewcommand{\gg}{\mathfrak{g}}
\renewcommand {\phi} {\varphi}
\newcommand{\refth}[1]{Theorem \ref{#1}}
\newcommand{\refle}[1]{Lemma \ref{#1}}
\newcommand{\refcor}[1]{Corollary \ref{#1}}
\newcommand{\refprop}[1]{Proposition \ref{#1}}
\newcommand{\refeq}[1]{(\ref{#1})}
\newcommand{\rk}{\mathrm{rk}}
\renewcommand{\sp}{\mathrm{sp}}
\newcommand{\gl}{\mathrm{gl}}
\renewcommand{\sl}{\mathrm{sl}}
\newcommand{\so}{\mathrm{so}}
\renewcommand{\o}{\mathrm{o}}
\newcommand {\quotst}{\div}
\def\cplus{\hbox{$\subset${\raise0.3ex\hbox{\kern -0.55em ${\scriptscriptstyle +}$}}\ }}
\def\clplus{\hbox{$\subset${\raise0.3ex\hbox{\kern -0.55em ${\scriptscriptstyle +}$}}\ }}
\def\crplus{\hbox{$\supset${\raise1.05pt\hbox{\kern -0.55em ${\scriptscriptstyle +}$}}\ }}
\author{Sergei Markouski}
\title{
Locally simple subalgebras of diagonal Lie algebras\\
}\date{}
\begin{document}
\maketitle \abstract{ We describe, up to isomorphism, all locally
simple subalgebras of any diagonal locally simple Lie algebra.

Key words (2010 MSC): 17B05 and 17B65.}

\section{Introduction}
A Lie algebra $\gg$ is \emph{locally finite} if any finite subset
$S$ of $\gg$ is contained in a finite-dimensional Lie subalgebra
$\gg(S)$ of $\gg$. If, for any $S$, $\gg(S)$ can be chosen simple
(semisimple), $\gg$ is called \emph{locally simple (semisimple)}.
In 1998, A. Baranov introduced the class of diagonal locally
finite Lie algebras and established their general properties, see
\cite{B1}, \cite{B2}. Moreover, an explicit description of the
more special class of diagonal locally simple Lie algebras was
obtained by A. Baranov and A. Zhilinskii in \cite{BZ}, where they
classified diagonal direct limits of simple complex Lie algebras
up to isomorphism. In the present paper we work with the latter
class of Lie algebras, and throughout the paper a diagonal Lie
algebra will be assumed locally simple. Particular examples of
such algebras are the classical infinite-dimensional complex Lie
algebras $\sl(\infty)$, $\so(\infty)$, and $\sp(\infty)$, which
can be defined as the unions $\cup_{i\in\ZZ_{> 1}}\sl(i)$,
$\cup_{i\in\ZZ_{> 1}}\o(i)$, and $\cup_{i\in\ZZ_{> 1}}\sp(2i)$,
respectively, for any inclusions $\sl(i)\subset\sl(i+1)$,
$\o(i)\subset\o(i+1)$, and $\sp(2i)\subset\sp(2i+2)$, $i>1$.
Moreover, the latter Lie algebras are the only
countable-dimensional finitary locally simple complex Lie
algebras, see \cite{B3}, \cite{B4}, \cite{BS}.

The semisimple subalgebras of semisimple finite-dimensional
complex Lie algebras were described by A. Malcev and E. Dynkin
more than half a century ago \cite{M}, \cite{D}. Recently, I.
Dimitrov and I. Penkov characterized all locally semisimple
subalgebras of $\sl(\infty)$, $\so(\infty)$, and $\sp(\infty)$
\cite{DP}. The same problem is of interest for the more general
class of diagonal Lie algebras. It makes sense to first restrict
the problem to describing, up to isomorphism, all locally simple
subalgebras of diagonal Lie algebras. The purpose of this paper is
to present a solution of the latter problem.

\section{Preliminaries}
The base field is $\CC$. We assume that all Lie algebras
considered are finite dimensional or countable dimensional. When
considering classical simple Lie algebras, we consider the three
types $A$, $C$, and $O$, where $O$ stands for both types $B$ and
$D$.

A classical simple Lie subalgebra $\gg_1$ of a finite-dimensional
classical simple Lie algebra $\gg_2$ is called \emph{diagonal} if
there is an isomorphism of $\gg_1$-modules
$$ \displaystyle V_2\downarrow\gg_1\cong\underbrace{V_1\oplus\ldots\oplus
V_1}_l\oplus\underbrace{V_1^{\ast}\oplus\ldots\oplus
V_1^{\ast}}_r\oplus\underbrace{T_1\oplus\ldots\oplus T_1}_z,
$$ where $V_i$ is the natural $\gg_i$-module
($i=1,2$), $V_1^{\ast}$ is the dual of $V_1$, and $T_1$ is the
one-dimensional trivial $\gg_1$-module. The triple $(l,r,z)$ is
called the \emph{signature} of $\gg_1$ in $\gg_2$. An injective
homomorphism $\varepsilon:\gg_1\rightarrow\gg_2$ is
\emph{diagonal} if $\varepsilon(\gg_1)$ is a diagonal subalgebra
of $\gg_2$. The \emph{signature} of $\varepsilon$ is by definition
the signature of $\varepsilon(\gg_1)$ in $\gg_2$.

An \emph{exhaustion} $$\gg_1\subset\gg_2\subset\cdots$$ of a
locally finite Lie algebra $\gg$ is a direct system of
finite-dimensional Lie subalgebras of $\gg$ such that the direct
limit Lie algebra $\underrightarrow{\lim}\,\gg_n$ is isomorphic to
$\gg$. A locally simple Lie algebra $\ss$ is \emph{diagonal} if it
admits an exhaustion by simple subalgebras $\ss_i$ such that all
inclusions $\ss_i\subset\ss_{i+1}$ are diagonal.

The following result is due to A. Baranov.
\begin{prop}
Any locally simple subalgebra of a diagonal Lie algebra is
diagonal.
\end{prop}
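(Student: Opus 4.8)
The plan is to manufacture, out of the given data, an exhaustion of $\kk$ by finite-dimensional simple subalgebras all of whose inclusions are diagonal; by definition this proves $\kk$ diagonal. Fix once and for all an exhaustion $\gg=\bigcup_n\gg_n$ with every $\gg_n$ simple and every inclusion $\gg_n\subseteq\gg_{n+1}$ diagonal. I would begin by recording two elementary facts about diagonal embeddings of finite-dimensional classical simple Lie algebras. \textbf{(i)} A composition of diagonal embeddings is diagonal: restricting a direct sum of copies of the natural, the conatural and the trivial module along a diagonal embedding again produces such a sum. \textbf{(ii)} (\emph{Sandwich property.}) If $\aa\subseteq\bb\subseteq\mathfrak c$ are classical simple and both $\aa\subseteq\mathfrak c$ and $\bb\subseteq\mathfrak c$ are diagonal, then $\aa\subseteq\bb$ is diagonal. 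For (ii): diagonality of $\bb\subseteq\mathfrak c$ means $V_{\mathfrak c}\downarrow\bb$ is a sum of natural, conatural and trivial $\bb$-modules, and it has a nontrivial summand since $\bb$ acts faithfully on $V_{\mathfrak c}$; hence the natural module $V_\bb$ — or, in type $A$, possibly its dual — is a direct summand of $V_{\mathfrak c}\downarrow\bb$. Restricting to $\aa$, the $\aa$-module $V_\bb\downarrow\aa$ (or its dual) is a direct summand of $V_{\mathfrak c}\downarrow\aa$, which by diagonality of $\aa\subseteq\mathfrak c$ is a sum of natural, conatural and trivial $\aa$-modules. Since that property is inherited by direct summands and by duals, $V_\bb\downarrow\aa$ has it, i.e. $\aa\subseteq\bb$ is diagonal.

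I would then reduce the proposition to the following assertion. \textbf{Claim.} For every finite-dimensional subalgebra $F$ of $\kk$ there are a finite-dimensional simple subalgebra $\kk'$ with $F\subseteq\kk'\subseteq\kk$ and an index $N$ such that the inclusion $\kk'\subseteq\gg_N$ is diagonal. Granting the Claim, pick any exhaustion $\kk=\bigcup_j\ll_j$ by finite-dimensional simple subalgebras (possible since $\kk$ is locally simple) and build a chain $\kk_1\subseteq\kk_2\subseteq\cdots$ recursively. Apply the Claim to $F=\ll_1$ to get $\kk_1$ and an index $N_1$ with $\kk_1\subseteq\gg_{N_1}$ diagonal. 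Given $\kk_{m-1}$ simple with $\kk_{m-1}\subseteq\gg_{N_{m-1}}$ diagonal, apply the Claim to the finite-dimensional subalgebra of $\kk$ generated by $\kk_{m-1}$ and $\ll_m$; this yields a finite-dimensional simple $\kk_m$ with $\kk_m\supseteq\kk_{m-1}$ and $\kk_m\supseteq\ll_m$, together with an index which, upon enlarging it and using (i), may be taken to be some $N_m>N_{m-1}$ with $\kk_m\subseteq\gg_{N_m}$ diagonal. Now $\kk_{m-1}\subseteq\gg_{N_m}$ is diagonal by (i), being the composite of $\kk_{m-1}\subseteq\gg_{N_{m-1}}$ with the consecutive diagonal inclusions $\gg_{N_{m-1}}\subseteq\cdots\subseteq\gg_{N_m}$; so, applying (ii) to $\kk_{m-1}\subseteq\kk_m\subseteq\gg_{N_m}$, the inclusion $\kk_{m-1}\subseteq\kk_m$ is diagonal. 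Because $\kk_m\supseteq\ll_m$ for every $m$, we have $\bigcup_m\kk_m=\kk$, and $\{\kk_m\}$ is the sought diagonal exhaustion.

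It remains to prove the Claim, which carries the entire content of the proposition and is the step I expect to be hard; it is also the only place where diagonality of $\gg$, as opposed to mere local simplicity of $\kk$, is used. (That $\gg$ be diagonal is essential: a non-diagonal locally simple Lie algebra — for instance a limit of iterated adjoint embeddings — is by this very proposition not a subalgebra of any diagonal Lie algebra, so no analogue of the Claim can hold for it.) To establish the Claim I would lean on Baranov's structural analysis of diagonal Lie algebras: since all the inclusions $\gg_n\subseteq\gg_m$ are diagonal, the natural modules $V_{\gg_n}$ form a very rigid directed system, and for a fixed finite-dimensional subalgebra $\ll\subseteq\gg$ the Jordan--Hölder constituents of $V_{\gg_n}\downarrow\ll$, taken over all $n$, remain within a fixed finite list (up to duals and trivial modules, they already occur at the first stage that contains $\ll$). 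Feeding this into local simplicity of $\kk$ — which furnishes, above any given $F$, ever larger simple subalgebras of $\kk$ — one runs a stabilization argument: enlarging $\ll\supseteq F$ inside $\kk$ enough, and correspondingly choosing $N$, forces the restriction $V_{\gg_N}\downarrow\kk'$ to consist only of natural, conatural and trivial $\kk'$-modules, which is precisely diagonality of $\kk'\subseteq\gg_N$. The subtlety — and what I expect to be the main obstacle — is that an individual inclusion $\ll\subseteq\gg_n$ of a simple subalgebra $\ll\subseteq\kk$ may well fail to be diagonal, and its defect can be repaired only by passing to a larger simple subalgebra of $\kk$, not by passing to a larger $\gg_n$; so the argument must play the two exhaustions against each other simultaneously, and turning the above stabilization heuristic into a proof requires the detailed signature bookkeeping of the classification of diagonal Lie algebras in \cite{BZ}, together with the general properties established in \cite{B1}.
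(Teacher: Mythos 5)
Your framework is a sensible plan for a hands-on, constructive proof, and your preparatory facts are correct: (i) composability of diagonal embeddings is standard, and your sandwich property (ii) is a valid consequence of complete reducibility (every direct summand of a module of the form $l V_\aa\oplus r V_\aa^{\ast}\oplus z T_\aa$ is again of that form). The reduction of the Proposition to your Claim, using (i) and (ii), is also sound.

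However, the argument as written has a genuine gap, which you yourself flag: the Claim is never proved. It is asserted, followed by a heuristic about stabilization of Jordan--H\"older constituents of $V_{\gg_n}\downarrow\ll$ and a remark that turning this into a proof ``requires the detailed signature bookkeeping of the classification of diagonal Lie algebras.'' But the Claim is precisely where the entire content of the Proposition resides, and nothing in the proposal establishes it; indeed, as you note, an individual inclusion $\ll\subseteq\gg_n$ of a simple subalgebra of $\kk$ need not be diagonal, and there is no a priori reason why enlarging $\ll$ inside $\kk$ should repair the defect. So the proposal is a plan, not a proof. The paper avoids all of this by going through a characterization of diagonal Lie algebras that you do not use: Corollary~5.11 of \cite{B1} says a locally simple Lie algebra is diagonal if and only if it admits an injective homomorphism into the Lie algebra associated with some locally finite associative algebra. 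Since the ambient diagonal algebra $\ss'$ admits such a homomorphism $\ss'\hookrightarrow\gg$, and $\ss\hookrightarrow\ss'$, the composite $\ss\hookrightarrow\gg$ shows $\ss$ is diagonal. That characterization is exactly the structural input that makes the statement essentially transitive; without invoking it (or proving something equivalent to your Claim from scratch, which would amount to reproving parts of \cite{B1}), the argument cannot close.
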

\begin{proof}
Let $\ss$ be a locally simple subalgebra of a diagonal Lie algebra
$\ss'$. Corollary 5.11 in \cite{B1} claims that a locally simple
Lie algebra is diagonal if and only if it admits an injective
homomorphism into a Lie algebra associated with some locally
finite associative algebra. Hence $\ss'$ admits an injective
homomorphism into a Lie algebra $\gg$ associated with some locally
finite associative algebra. Then there is an injective
homomorphism $\ss\rightarrow\ss'\rightarrow\gg$, so $\ss$ is
diagonal.
\end{proof}

This result reduces the study of locally simple subalgebras of
diagonal Lie algebras to the study of diagonal subalgebras.

Next we introduce the notion of index of a simple subalgebra in a
simple Lie algebra. This notion goes back to E. Dynkin \cite{D}.
For a simple finite-dimensional Lie algebra $\gg$ we denote by
$\langle\;,\;\rangle_\gg$ the invariant non-degenerate symmetric
bilinear form on $\gg$ normalized so that
$\langle\alpha,\alpha^\vee\rangle_\gg =2$ for any long root
$\alpha$ of $\gg$. If $\phi:\ss\to\gg$ is an injective
homomorphism of simple Lie algebras, then $\langle
x,y\rangle_\phi:=\langle \phi(x),\phi(y)\rangle_\gg$ is an
invariant non-degenerate symmetric bilinear form on $\ss$.
Consequently,
\[\langle x,y\rangle_\phi=I_\ss^\gg(\phi)\langle x, y\rangle_\ss\]
for some scalar $I_\ss^\gg(\phi)$. By definition $I_\ss^\gg(\phi)$
is the \emph{index} of $\ss$ in $\gg$. If $\phi$ is clear from the
context, we will simply write $I_\ss^\gg$. If $U$ is any
finite-dimensional $\ss$-module, then the \emph{index}
$I_{\ss}(U)$ of $U$ is defined as $I_\ss^{\sl(U)}$, where $\ss$ is
mapped into $\sl(U)$ through the module $U$. The following
properties of the index are established in \cite{D}.

\begin{prop}\label{Dprop}

\begin{itemize}
\item[(i)] $I_\ss^\gg\in\ZZ_{\geq 0}$. \item[(ii)] $I_\ss^\kk
I_\kk^\gg=I_\ss^\gg$. \item[(iii)] $I_{\ss}(U_1\oplus\cdots\oplus
U_n)=I_{\ss}(U_1)+\cdots+I_{\ss}(U_n)$.\item[(iv)] If $U$ is an
$\ss$-module with highest weight $\lambda$ (with respect to some
Borel subalgebra), then $I_\ss(U)=\frac{\dim U}{\dim
\ss}\langle\lambda,\lambda+2\rho\rangle_\ss$, where $2\rho$ is the
sum of all the positive roots of $\ss$.
\end{itemize}
\end{prop}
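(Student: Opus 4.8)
The plan is to establish (ii)--(iv) by direct computation and then deduce (i) from them. For (ii) I would restrict the normalized form $\langle\cdot,\cdot\rangle_\gg$ to $\ss$ through the chain $\ss\subset\kk\subset\gg$ in two ways: restricting directly gives $I_\ss^\gg\langle\cdot,\cdot\rangle_\ss$, whereas restricting first to $\kk$ and then to $\ss$ gives $I_\kk^\gg I_\ss^\kk\langle\cdot,\cdot\rangle_\ss$; since $\ss$ is simple these two invariant forms are proportional, so $I_\ss^\gg=I_\ss^\kk I_\kk^\gg$. For (iii) and (iv) the key input is that the normalized form on $\sl(W)$ is $(X,Y)\mapsto\tr_W(XY)$, so that for any finite-dimensional $\ss$-module $W$ with action $\rho$ one has $\tr_W(\rho(x)\rho(y))=I_\ss(W)\langle x,y\rangle_\ss$. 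If $W=\bigoplus_i W_i$ then $\rho$ is block diagonal and $\tr_W=\sum_i\tr_{W_i}$, which yields (iii) at once.

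For (iv) I would use the Casimir element $\Omega=\sum_a e_ae^a$ of $\ss$ formed from a pair of $\langle\cdot,\cdot\rangle_\ss$-dual bases $\{e_a\},\{e^a\}$. It is central, hence acts on the irreducible module $U$ of highest weight $\lambda$ by a scalar, which the standard Verma-module computation identifies with $\langle\lambda,\lambda+2\rho\rangle_\ss$. Evaluating $\tr_U(\Omega)$ in two ways,
\[
\dim U\cdot\langle\lambda,\lambda+2\rho\rangle_\ss=\tr_U(\Omega)=\sum_a\tr_U\!\big(\rho(e_a)\rho(e^a)\big)=I_\ss(U)\sum_a\langle e_a,e^a\rangle_\ss=I_\ss(U)\dim\ss ,
\]
and dividing gives (iv).

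Finally (i). Positivity is immediate from (iv): $\langle\lambda,\lambda+2\rho\rangle_\ss=\langle\lambda+\rho,\lambda+\rho\rangle_\ss-\langle\rho,\rho\rangle_\ss>0$ for $\lambda\neq0$, so $I_\ss(W)>0$ for any faithful $\ss$-module $W$; for a general injective $\phi\colon\ss\to\gg$ I would choose a faithful $\gg$-module $W$ (e.g.\ the adjoint) and use (ii) to write $I_\ss^\gg=I_\ss(W)/I_\gg(W)>0$. The genuine work is the integrality of $I_\ss^\gg$, which I would obtain by reducing to $\ss=\sl(2)$. Namely, the $\sl(2)$-subalgebra $\aa\subset\ss$ attached to a long root of $\ss$ has $I_\aa^\ss=1$ (the normalized form of $\ss$ restricts to the normalized form of $\aa$, both being determined by their value on the coroot), so $I_\ss^\gg=I_\aa^\gg$ by (ii) and it suffices to show $I_{\sl(2)}^\gg\in\ZZ$. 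Since $\gg$ is classical here, with natural module $W$ we have $I_{\sl(2)}^\gg=I_{\sl(2)}(W)/I_\gg(W)$, where $I_\gg(W)=1$ for types $A$, $C$ and $I_\gg(W)=2$ for type $O$; and by (iii)--(iv), writing the $\sl(2)$-module $W\cong\bigoplus_iV_{m_i}$ one gets $I_{\sl(2)}(W)=\sum_i\binom{m_i+2}{3}\in\ZZ$, which is moreover even when $W$ carries a nondegenerate invariant symmetric form, since each orthogonal constituent ($m_i$ even) contributes a multiple of $4$ and the symplectic constituents occur with even multiplicity. This yields $I_{\sl(2)}^\gg\in\ZZ$ and completes (i).

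The single real obstacle is this last integrality statement; parts (ii)--(iv) and positivity in (i) are routine trace identities. (For a non-classical target $\gg$ one proceeds along the same lines after recording that the adjoint representation has index $2h^\vee_\gg$ and classifying the indices of $\sl(2)$-modules, which is in essence Dynkin's original argument in \cite{D}.)
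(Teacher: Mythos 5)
The paper does not actually prove this proposition: it simply cites Dynkin \cite{D}, so there is no proof to compare against, only the original source. Your argument is correct and recapitulates, in a streamlined form, the substance of Dynkin's approach. Parts (ii)--(iv) are routine and you handle them well: (ii) by comparing invariant forms along the chain $\ss\subset\kk\subset\gg$, (iii) by additivity of the trace form on $\sl(W)$ across a direct sum decomposition, and (iv) by computing $\tr_U(\Omega)$ for the Casimir in two ways, noting that $\tr_U(\rho(x)\rho(y))=I_\ss(U)\langle x,y\rangle_\ss$ and $\sum_a\langle e_a,e^a\rangle_\ss=\dim\ss$. Positivity in (i) is correctly reduced to $\langle\lambda+\rho,\lambda+\rho\rangle>\langle\rho,\rho\rangle$. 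The genuinely nontrivial part is integrality, and your reduction is precisely Dynkin's: the long-root $\sl(2)$-subalgebra $\aa\subset\ss$ has $I_\aa^\ss=1$ because $\langle\cdot,\cdot\rangle_\ss$ restricted to $\aa$ is already normalized (one checks $\langle h_\alpha,h_\alpha\rangle_\ss=2$ for a long coroot $h_\alpha$, matching the normalized form on $\sl(2)$), so $I_\ss^\gg=I_\aa^\gg$. Your computation $I_{\sl(2)}(V_m)=\binom{m+2}{3}$ is right, as is $I_\gg(W)=1$ for types $A,C$ and $I_\gg(W)=2$ for type $O$; and the parity argument for orthogonal targets works because $\binom{2k+2}{3}=4(1^2+2^2+\cdots+k^2)$ is a multiple of $4$ while the symplectic isotypic spaces are even-dimensional. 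This settles (i) for classical $\gg$, which is the only case the paper uses; you correctly flag that the exceptional cases require the full classification of $\sl(2)$-indices from \cite{D}.
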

\begin{corollary}\label{Dcor}
Let $\ss$ and $\gg$ be finite-dimensional classical simple Lie
algebras of the same type ($A$, $C$, or $O$). If $\ss$ is a
diagonal subalgebra of $\gg$ of signature $(l,r,z)$, then
$I_\ss^\gg(\varepsilon)=l+r$.
\end{corollary}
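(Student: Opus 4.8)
The plan is to reduce the computation to the index of the natural module and then exploit the very definition of the signature. Write $V_\gg$ and $V_\ss$ for the natural modules of $\gg$ and $\ss$, let $\gg\hookrightarrow\sl(V_\gg)$ be the natural embedding, and consider the composite $\ss\xrightarrow{\varepsilon}\gg\hookrightarrow\sl(V_\gg)$. By multiplicativity of the index (\refprop{Dprop}(ii)),
\[I_\ss^\gg(\varepsilon)\cdot I_\gg^{\sl(V_\gg)}=I_\ss^{\sl(V_\gg)}=I_\ss(V_\gg\downarrow\ss),\]
the last equality being the definition of the index of an $\ss$-module. So it suffices to evaluate $I_\ss(V_\gg\downarrow\ss)$ and $I_\gg(V_\gg)$ separately.

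For the first, the defining property of the signature gives $V_\gg\downarrow\ss\cong V_\ss^{\oplus l}\oplus(V_\ss^{\ast})^{\oplus r}\oplus T^{\oplus z}$ with $T$ trivial, so additivity (\refprop{Dprop}(iii)) yields $I_\ss(V_\gg\downarrow\ss)=l\,I_\ss(V_\ss)+r\,I_\ss(V_\ss^{\ast})+z\,I_\ss(T)$. Now $I_\ss(T)=0$ (immediate from \refprop{Dprop}(iv) with $\lambda=0$), and $I_\ss(V_\ss^{\ast})=I_\ss(V_\ss)$: if $\rho$ is the representation on $V_\ss$, then $V_\ss^{\ast}$ is afforded by $x\mapsto-\rho(x)^{\mathsf T}$, and $\tr\bigl(\rho(x)^{\mathsf T}\rho(y)^{\mathsf T}\bigr)=\tr\bigl(\rho(x)\rho(y)\bigr)$, so the trace forms defining the two indices coincide. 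Hence $I_\ss(V_\gg\downarrow\ss)=(l+r)\,I_\ss(V_\ss)$.

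It remains to compute the index $I(V)$ of the natural module $V$ of a classical simple Lie algebra and to check that it depends only on the type. This is a direct application of \refprop{Dprop}(iv): the highest weight of $V$ is the first fundamental weight $\omega_1$, and substituting $\dim V$ and $\langle\omega_1,\omega_1+2\rho\rangle$ in the appropriate normalization for each of $A_n$, $B_n$, $C_n$, $D_n$ gives $I(V)=1$ in types $A$ and $C$ and $I(V)=2$ in type $O$ (both $B$ and $D$), independently of the rank. Since $\ss$ and $\gg$ have the same type by hypothesis, $I_\ss(V_\ss)=I_\gg(V_\gg)=:c$, and therefore $I_\ss^\gg(\varepsilon)=(l+r)\,c/c=l+r$. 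The only real work is the elementary root-system bookkeeping in this last step — computing $\rho$, $\dim$, and the normalized invariant form for each classical type — together with the (conventional) understanding that a classical simple Lie algebra of type $O$ means $\so(n)$ for $n$ large enough that the exceptional low-rank isomorphisms do not interfere; everything else is a formal consequence of \refprop{Dprop}.
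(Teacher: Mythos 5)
Your proof is correct, and it is slightly more self-contained than the paper's. The core step — using the signature decomposition together with additivity (\refprop{Dprop}(iii)) and $I_\ss(V_\ss)=I_\ss(V_\ss^\ast)$ to get $I_\ss(V_\gg\downarrow\ss)=(l+r)I_\ss(V_\ss)$ — is shared. The difference is in how the two arguments pass from this to $I_\ss^\gg$. The paper handles type $A$ by observing $\gg=\sl(V_\gg)$, and for types $O$ and $C$ it cites the identities $I_\ss^{\sp(U)}=I_\ss(U)$ and $I_\ss^{\so(U)}=\tfrac12 I_\ss(U)$ from \cite{DP} (or \cite{D}); those identities simultaneously relate $I_\ss^\gg$ to $I_\ss(V_\gg\downarrow\ss)$ and, by specializing to $\ss=\gg$, supply the index of the natural module. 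You instead embed $\gg\hookrightarrow\sl(V_\gg)$ uniformly for all three types, apply multiplicativity (\refprop{Dprop}(ii)) to write $I_\ss^\gg=I_\ss(V_\gg\downarrow\ss)/I_\gg(V_\gg)$, and then observe that $I_\ss(V_\ss)$ and $I_\gg(V_\gg)$ cancel because the Dynkin index of the natural module depends only on the type — which you justify by a direct application of \refprop{Dprop}(iv). In effect your argument re-derives the cited \cite{DP} identities as a by-product ($I_{\sp(U)}^{\sl(U)}=1$, $I_{\so(U)}^{\sl(U)}=2$) rather than invoking them, at the modest cost of an explicit, if routine, root-system computation of $\langle\omega_1,\omega_1+2\rho\rangle$ for each classical family. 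Both are fine; yours makes the dependence on the value of the natural-module index explicit, and also makes it clear that only the \emph{equality} $I_\ss(V_\ss)=I_\gg(V_\gg)$, not its actual value, is ultimately needed.
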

\begin{proof}
Indeed, if $V$ is the natural $\ss$-module then clearly
$I_\ss(V)=I_\ss(V^{\ast})$, and (iii) implies the result for type
$A$ algebras. If $\ss$ and $\gg$ are of type $O$ or $C$ then the
result follows from the observation in \cite{DP} that
$I_\ss^{\sp(U)}=I_\ss(U)$ and $I_\ss^{\so(U)}=\frac{1}{2}I_\ss(U)$
when $U$ admits a corresponding invariant form. This latter
observation is also a corollary from \cite{D}.
\end{proof}

Let us now recall several notions introduced by Baranov and
Zhilinskii, and state the main result of \cite{BZ}, namely the
classification of diagonal Lie algebras.

Let $p_1=2,p_2=3,\dots$ be the increasing sequence of all prime
numbers. A map from the set $\{p_1,p_2,\ldots\}$ into the set
$\{0,1,2,\ldots\}\bigcup\{\infty\}$ is called a \emph{Steinitz
number}. The Steinitz number which has value $\alpha_1$ at $p_1$,
$\alpha_2$ at $p_2$, etc$.$ will be denoted by
$p_1^{\alpha_1}p_2^{\alpha_2}\cdots$. Let
$\Pi=p_1^{\alpha_1}p_2^{\alpha_2}\cdots$ and
$\Pi'=p_1^{\alpha'_1}p_2^{\alpha'_2}\cdots$ be two Steinitz
numbers. We put
$\Pi\,\Pi'=p_1^{\alpha_1+\alpha'_1}p_2^{\alpha_2+\alpha'_2}\cdots$,
and we say that $\Pi$ \emph{divides} $\Pi'$ (or $\Pi|\Pi'$) if and
only if $\alpha_1\leq\alpha'_1,\,\alpha_2\leq\alpha'_2,\,\dots$.
In the latter case we write
$\quotst(\Pi',\Pi)=p_1^{\alpha'_1-\alpha_1}p_2^{\alpha'_2-\alpha_2}\cdots$,
where by convention $p_i^{\infty-\infty}=1$ for any $i$. We also
define the greatest common divisor $\mathrm{GCD}(\Pi,\Pi')$ as
$p_1^{\min(\alpha_1,\alpha'_1)}p_2^{\min(\alpha_2,\alpha'_2)}\cdots$.

Let $q\in\QQ$. We write $\Pi=q\Pi'$ (or $q\in\frac{\Pi}{\Pi'}$) if
there exists $n\in\NN$ such that $nq\in\NN$ and $n\Pi=nq\Pi'$. If
there exists $0\neq q\in\QQ$ such that $\Pi=q\Pi'$, then we say
that $\Pi$ and $\Pi'$ are $\QQ$-\emph{equivalent} and denote this
relation by $\displaystyle \Pi\stackrel{\QQ}{\sim}\Pi'$. Suppose
$q\in\frac{\Pi}{\Pi'}$ for some $0\neq q\in\QQ$. If $p^\infty$
divides $\Pi$, then $p^\infty$ also divides $\Pi'$ and so
$\Pi=qp^k\Pi'$ for all $k\in\ZZ$. Hence in this case
$\{qp^k\}_{k\in\ZZ}$ is a subset of $\frac{\Pi}{\Pi'}$ in our
notation. On the other hand, if there is no prime $p$ with
$p^\infty$ dividing $\Pi$, then the set $\frac{\Pi}{\Pi'}$
consists of the only element $q$. If $\Cal S = (s_1,s_2,\ldots)$
is a sequence of positive integers, $\mathrm{Stz}(\Cal S)$ denotes
the infinite product $\displaystyle \prod_{i=1}^{\infty}s_i$
considered as a Steinitz number.

Let $\ss$ be an infinite-dimensional diagonal Lie algebra, so
there is an exhaustion $\ss=\cup_i\ss_i$ with all inclusions
$\ss_i\subset\ss_{i+1}$ being diagonal. Without loss of generality
we may assume that all $\ss_i$ are of the same type $X$ ($X=A$,
$C$, or $O$), and we say that $\ss$ is of type $X$. Note that a
diagonal Lie algebra can be of more than one type. The triple
$(l_i,r_i,z_i)$ denotes the signature of the homomorphism
$\ss_i\rightarrow\ss_{i+1}$ and $n_i$ denotes the dimension of the
natural $\ss_i$-module. We assume that $r_i=0$ if $X$ is not $A$
(for all classical Lie algebras of type other than $A$ the natural
representation is isomorphic to its dual). We also assume that
$l_i\geq r_i$ for all $i$ for type $A$ algebras. (This does not
restrict generality as one can apply outer automorphisms to a
suitable subexhaustion if necessary.) Finally, if not stated
otherwise, we assume that $n_1=1,\,l_1=n_2,\,r_1=z_1=0.$ Denote by
$\Cal T$ the sequence of all such triples
$\{(l_i,r_i,z_i)\}_{i\in\NN}$. We will write $\ss=X(\Cal T)$ which
make sense up to isomorphism.

Set $s_i=l_i+r_i$, $c_i=l_i-r_i$ ($i\geq 1$), $\Cal S =
(s_i)_{i\in\NN}$, $\Cal C=(c_i)_{i\in\NN}$. Put
$\delta_i=\frac{s_1\cdots s_{n-1}}{n_i}$. Then
$\delta_{i+1}=\frac{s_1\cdots s_n}{n_{i+1}}=\frac{s_1\cdots
s_{n-1}}{n_i+(z_i/s_i)}\leq\delta_i$. The limit
$\displaystyle\delta=\lim_{i\rightarrow\infty}\delta_i$ is called
the \emph{density index} of $\Cal T$ and is denoted by
$\delta(\Cal T)$. Since $\delta_2=s_1/n_2=1$, we have
$0\leq\delta\leq 1$. If $\delta =0$ then the sequence of triples
$\Cal T$ is called \emph{sparse}. If there exists $i$ such that
$\delta_j=\delta_i\neq 0$ for all $j>i$, the sequence is called
\emph{pure}. We say that $\Cal T$ is \emph{dense} if $0<\delta
<\delta_i$ for all $i$.

If there exists $i$ such that $c_j=s_j$ for all $j\geq i$, then
$\Cal T$ is called \emph{one-sided} (in which case we can and will
assume that $c_j=s_j$ for all $j\geq 1$). Otherwise it is called
\emph{two-sided}. If, for each $i$, there exists $j>i$ such that
$c_j=0$, then $\Cal T$ is called \emph{symmetric}. Otherwise it is
called \emph{non-symmetric}. In the latter case we will assume
that $c_i>0$ for all $i\geq 1$. Set $\sigma_i=\frac{c_1\cdots
c_i}{s_1\cdots s_i}$. The limit
$\displaystyle\sigma=\lim_{i\rightarrow\infty}\sigma_i$ is called
the \emph{symmetry index} of $\Cal T$ and is denoted by
$\sigma(\Cal T)$. Observe that $0\leq\sigma\leq 1$. Two-sided
non-symmetric sequences $\Cal T$ with $\sigma(\Cal T)=0$ are
called \emph{weakly non-symmetric}, and those with $\sigma(\Cal
T)\neq 0$ are called \emph{strongly non-symmetric}.

The classification of the infinite-dimensional diagonal locally
simple Lie algebras is given by the following two theorems.

\begin{theo}\textbf{\cite{BZ}}\label{BZh1}
Let $X=A$, $C$, or $O$. Let $\Cal T=\{(l_i,r_i,z_i)\}$ and $\Cal
T'=\{(l'_i,r'_i,z'_i)\}$, where $r_i=r'_i=0$ if $X\neq A$. Set
$\delta=\delta(\Cal T)$, $\sigma=\sigma(\Cal T)$,
$\delta'=\delta(\Cal T')$, $\sigma'=\sigma(\Cal T')$. Then $X(\Cal
T)\cong X(\Cal T')$ if and only if the following conditions hold.

\begin{itemize}
\item[($\Cal A_1$)] The sequences $\Cal T$ and $\Cal T'$ have the
same density type.

\item[($\Cal A_2$)] $\mathrm{Stz}(\Cal
S)\stackrel{\QQ}{\sim}\mathrm{Stz}(\Cal S')$.

\item[($\Cal A_3$)]
$\frac{\delta}{\delta'}\in\frac{\mathrm{Stz}(\Cal
S)}{\mathrm{Stz}(\Cal S')}$ for dense and pure sequences.

\item[($\Cal B_1$)] The sequences $\Cal T$ and $\Cal T'$ have the
same symmetry type.

\item[($\Cal B_2$)] $\mathrm{Stz}(\Cal
C)\stackrel{\QQ}{\sim}\mathrm{Stz}(\Cal C')$ for two-sided
non-symmetric sequences.

\item[($\Cal B_3$)] There exists $\alpha\in\frac{\mathrm{Stz}(\Cal
S)}{\mathrm{Stz}(\Cal S')}$ such that
$\alpha\frac{\sigma}{\sigma'}\in\frac{\mathrm{Stz}(\Cal
C)}{\mathrm{Stz}(\Cal C')}$ for two-sided strongly non-symmetric
sequences. Moreover, $\alpha=\frac{\delta}{\delta'}$ if in
addition the triple sequences are dense or pure.
\end{itemize}

\end{theo}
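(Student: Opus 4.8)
The plan is to reduce $X(\Cal T)\cong X(\Cal T')$ to the existence of an \emph{intertwining ladder} between the two exhaustions and then to translate the existence of such a ladder into arithmetic conditions on the data $(\Cal S,\Cal C,\delta,\sigma)$ via the Dynkin index. Concretely, two direct limits of simple Lie algebras are isomorphic if and only if, after passing to subexhaustions $\ss_{i_1}\subset\ss_{i_2}\subset\cdots$ of $X(\Cal T)$ and $\ss'_{j_1}\subset\ss'_{j_2}\subset\cdots$ of $X(\Cal T')$, one can insert diagonal embeddings $\ss_{i_k}\hookrightarrow\ss'_{j_k}\hookrightarrow\ss_{i_{k+1}}$ whose successive composites are the inclusions of the two chains. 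The composite of diagonal embeddings of classical Lie algebras of one fixed type $X$ is again diagonal, and by Proposition~\refprop{Dprop}(ii) together with Corollary~\refcor{Dcor} its index is the product of the intervening values $s_\bullet$; likewise the dimension of the natural module multiplies by that index and then grows by the accumulated trivial summands. So the whole problem becomes: for $i<i'$ and a simple $\gg'$ of type $X$ containing $\ss_{i'}$, which composite signatures $\ss_i\hookrightarrow\gg'$ occur? The answer is governed precisely by ``the index divides the right Steinitz quantity'' (sizes of $\mathrm{Stz}(\Cal S)$), ``the natural module is large enough'' (densities $\delta_i$), and in type $A$ ``the $V$-versus-$V^{\ast}$ bookkeeping is consistent'' (the net multiplicities $c_i$ and the ratios $\sigma_i$) --- and these are exactly conditions $(\Cal A_1)$--$(\Cal B_3)$.

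For necessity, an isomorphism yields an intertwining ladder by a routine back-and-forth argument. Writing $A_k=I^{\ss'_{j_k}}_{\ss_{i_k}}$ and $B_k=I^{\ss_{i_{k+1}}}_{\ss'_{j_k}}$, multiplicativity of the index gives $A_kB_k=s_{i_k}\cdots s_{i_{k+1}-1}$ and $B_kA_{k+1}=s'_{j_k}\cdots s'_{j_{k+1}-1}$, so regrouping the telescoping products yields $\mathrm{Stz}(\Cal S)=A_1\,\mathrm{Stz}(\Cal S')$ with $A_1\in\ZZ_{>0}$, whence $(\Cal A_2)$. A parallel computation with the $n_i$, using $n_{i+1}=s_in_i+z_i$ and $\delta_i n_i=s_1\cdots s_{i-1}$, shows $\delta/\delta'$ lies in $\frac{\mathrm{Stz}(\Cal S)}{\mathrm{Stz}(\Cal S')}$ when both are nonzero, and that whether $\delta$ vanishes, is eventually attained, or is a strict infimum is preserved --- this is $(\Cal A_1)$ and $(\Cal A_3)$. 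In type $A$ the net multiplicities compose multiplicatively (a $V^{\ast}$-summand branches to $\ss_i$ with the opposite sign), so running the same argument with $\Cal C$ and with $\sigma_i=\frac{c_1\cdots c_i}{s_1\cdots s_i}$ gives $(\Cal B_1)$, $(\Cal B_2)$, and $(\Cal B_3)$, the constraint on $\alpha$ being forced by the $\Cal S$-bookkeeping and sharpened to $\alpha=\delta/\delta'$ once density fixes the scale.

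For sufficiency one builds the ladder by hand: given $\ss_{i_k}$ embedded in $\ss'_{j_k}$, conditions $(\Cal A_2)$--$(\Cal A_3)$ (and the matching density type) provide, for $i_{k+1}$ large enough, both the index room and the natural-module room to diagonally embed $\ss'_{j_k}$ into $\ss_{i_{k+1}}$ with a composite signature that in the two-sided non-symmetric case also respects $(\Cal B_2)$--$(\Cal B_3)$; since the only obstruction to realizing a prescribed signature $(l,r,z)$ between classical Lie algebras of one type is the evident compatibility with the natural module decomposition, all the content sits in the Steinitz arithmetic and in the limits $\delta,\sigma$. Alternating produces mutually inverse limit maps, hence the isomorphism. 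The hard part is exactly this construction for dense and pure sequences, where one must match not just the prime content of $\mathrm{Stz}(\Cal S)$ and $\mathrm{Stz}(\Cal C)$ but the exact limiting values $\delta$ and $\sigma$: the interleaving indices and signatures have to be chosen so delicately that the densities and symmetry ratios of the interleaved exhaustion converge to the common targets, a balancing argument further complicated by the type-$O$ versus type-$C$ discrepancy (the factor $\frac{1}{2}$ in Corollary~\refcor{Dcor}) and by the convention $p^{\infty-\infty}=1$, which is what makes the \emph{set} $\frac{\Pi}{\Pi'}$, rather than a single rational, the correct home for $\alpha$ and for $\delta/\delta'$.
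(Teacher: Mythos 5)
This theorem is quoted from Baranov--Zhilinskii \cite{BZ} and the paper offers no proof of it; it is an imported classification result on which the rest of the paper relies. So there is no ``paper's own proof'' to compare against, and your proposal has to be judged as a free-standing sketch.

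Your sketch does capture the right strategy, which is indeed the one underlying \cite{BZ}: reduce $X(\Cal T)\cong X(\Cal T')$ to the existence of an interleaving (back-and-forth) ladder of diagonal embeddings between subexhaustions, and then read off the invariants from the telescoping of Dynkin indices (via \refprop{Dprop}(ii) and \refcor{Dcor}) and of the signed multiplicities $c_i$, which also compose multiplicatively (if two diagonal maps have signatures $(l,r,z)$ and $(l',r',z')$ the composite has $l''-r''=(l-r)(l'-r')$). The necessity argument as you outline it is essentially sound: with $A_k=I^{\ss'_{j_k}}_{\ss_{i_k}}$, $B_k=I^{\ss_{i_{k+1}}}_{\ss'_{j_k}}$ you get $A_kB_k=s_{i_k}\cdots s_{i_{k+1}-1}$ and $B_kA_{k+1}=s'_{j_k}\cdots s'_{j_{k+1}-1}$, and telescoping gives $\mathrm{Stz}(\Cal S)\stackrel{\QQ}{\sim}\mathrm{Stz}(\Cal S')$; the $\Cal C$-version gives $(\Cal B_2)$, and tracking $n_i$ alongside gives the $\delta$- and $\sigma$-statements.

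That said, what you have written is a sketch with real gaps, and you flag the biggest one yourself. The sufficiency direction in the dense and pure cases requires an actual construction of the interleaving diagonal maps whose accumulated trivial summands $z$ are tuned so that the density $\delta$ and symmetry index $\sigma$ of the merged exhaustion converge to the correct targets; ``index room and natural-module room'' names the problem rather than solving it, and the existence of a diagonal map with a prescribed signature needs the analogue of \refle{BZhLemma} (Lemma~2.6 of \cite{BZ}), which has its own hypotheses (parity constraints between types $O$ and $C$, the condition $p'=q'$ from $A$ to $O$ or $C$). Similarly, the necessity of $(\Cal A_1)$ and $(\Cal B_1)$ (preservation of density type and of symmetry type) is asserted but not derived --- one must rule out, say, a pure $\Cal T$ being isomorphic to a dense $\Cal T'$, which is a genuine limiting argument, not just bookkeeping. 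And the claim that $\alpha$ is ``forced'' and ``sharpened to $\delta/\delta'$'' in $(\Cal B_3)$ is stated without the computation that actually pins it down. So the plan is the right one, but as it stands it would not pass as a proof of the theorem; the delicate analytic matching that you defer is where the substance of \cite{BZ} lives.
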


\begin{theo}\textbf{\cite{BZ}}\label{BZh2}
Let $\Cal T=\{(l_i,r_i,z_i)\}$, $\Cal T'=\{(l'_i,0,z'_i)\}$, and
$\Cal T''=\{(l''_i,0,z''_i)\}$.

\begin{itemize}
\item[(i)] $A(\Cal T)\cong O(\Cal T')$ (resp., $A(\Cal T)\cong
C(\Cal T')$) if and only if $\Cal T$ is two-sided symmetric,
$2^{\infty}$ divides $\mathrm{Stz}(\Cal S')$, and the conditions
$(\Cal A_1),\,(\Cal A_2),\,(\Cal A_3)$ of \refth{BZh1} hold.

\item[(ii)] $O(\Cal T')\cong C(\Cal T'')$ if and only if
$2^{\infty}$ divides both $\mathrm{Stz}(\Cal S')$, and
$\mathrm{Stz}(\Cal S'')$, and the conditions $(\Cal A_1),\,(\Cal
A_2),\,(\Cal A_3)$ of \refth{BZh1} hold.
\end{itemize}

\end{theo}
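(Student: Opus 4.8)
This is one of the classification theorems of \cite{BZ}; I sketch the line of argument one would follow. Both parts rest on the same device --- interlacing an exhaustion of one classical type inside an exhaustion of another --- and on three elementary facts: (a) a self-dual module $W$ over a simple Lie algebra of type $A$, which is exactly what a symmetric signature $(l,l,z)$ produces, carries a nondegenerate invariant symmetric form, and also an antisymmetric one when $z$ is even, so $\sl(m)$ embeds diagonally into a suitable $\so(m')$ (resp.\ $\sp(m')$); (b) besides the natural inclusions $\so(k),\sp(k)\subset\sl(k)$, one has $\sl(m)\hookrightarrow\so(2m)$ and $\sl(m)\hookrightarrow\sp(2m)$ via $V\oplus V^{\ast}$, and $\so(k)\hookrightarrow\sp(2k)$, $\sp(2k)\hookrightarrow\so(4k)$ via $W\oplus W$ with the suitably (anti)symmetrized form; (c) the chain $\so(2m)\subset\so(2m+1)\subset\so(2m+2)$ lets one absorb a one-dimensional trivial summand. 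Since $s_i$ is multiplicative under composition of signatures, every interlacing step that uses the passage $V\rightsquigarrow V\oplus V^{\ast}$ (or $W\rightsquigarrow W\oplus W$) doubles the relevant $s$-multiplicity; this is the origin of the ``$2^{\infty}$'' hypotheses, and of the requirement that $\Cal T$ be two-sided symmetric in (i).

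For the ``if'' direction of (i), given a two-sided symmetric $\Cal T$ one passes to a subexhaustion of $A(\Cal T)$ in which every signature is symmetric (possible by two-sidedness) and, by (a) (together with (c) in the $C$-case), interlaces orthogonal (resp.\ symplectic) subalgebras $\sl(m_i)\subset\so(m_{i+1})\subset\sl(m_{i+1})$. From the resulting composite type-$O$ signatures one reads off that the new exhaustion $\Cal T^{\flat}$ of $O(\Cal T^{\flat})\cong A(\Cal T)$ satisfies $2^{\infty}\mid\mathrm{Stz}(\Cal S^{\flat})$, that $\mathrm{Stz}(\Cal S^{\flat})\stackrel{\QQ}{\sim}\mathrm{Stz}(\Cal S)$, and that it has the same density type and (up to the permitted ambiguity) the same density index as $\Cal T$. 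The hypotheses $(\Cal A_1)$--$(\Cal A_3)$ then say precisely that $\Cal T^{\flat}$ and $\Cal T'$ define isomorphic type-$O$ (resp.\ type-$C$) algebras, so \refth{BZh1} yields $O(\Cal T^{\flat})\cong O(\Cal T')$, hence $A(\Cal T)\cong O(\Cal T')$. Part (ii) is the same argument with (b) and (c) replacing (a): from an $O$-exhaustion with all $s'_i$ even one interlaces $\so(n'_i)\subset\sp(2n'_i)\subset\so(n'_{i+1})$ to obtain a $C$-exhaustion, and symmetrically in the other direction; the density- and symmetry-type conditions are automatic since a diagonal $O$- or $C$-algebra is always symmetric.

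For the ``only if'' direction of (i), an isomorphism $\varphi\colon A(\Cal T)\xrightarrow{\ \sim\ }O(\Cal T')$ produces, by the standard back-and-forth between the two exhaustions, a cofinal alternating chain $\sl(m_{i_0})\subset\so(n'_{j_1})\subset\sl(m_{i_1})\subset\so(n'_{j_2})\subset\cdots$. Invoking the structure theory of diagonal embeddings so that each link is diagonal, one checks that the composite type-$A$ steps $\sl(m_{i_{t-1}})\subset\sl(m_{i_t})$ have symmetric signature --- because the natural module of $\so(n'_{j_t})$ restricts self-dually to $\sl(m_{i_{t-1}})$ --- whence $\Cal T$ is two-sided symmetric; and that in the composite type-$O$ steps $\so(n'_{j_t})\subset\so(n'_{j_{t+1}})$ the natural module of $\so(n'_{j_{t+1}})$, restricted through the intermediate $\sl(m_{i_t})$ and down to $\so(n'_{j_t})$, has the form $W\oplus W\oplus(\text{trivials})$, forcing the $s$-value to be even --- whence $2^{\infty}\mid\mathrm{Stz}(\Cal S')$. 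Finally $(\Cal A_1)$--$(\Cal A_3)$ are forced by \refth{BZh1} applied separately on each side. The $C$-case of (i) and both directions of (ii) are handled identically, using (b) and (c).

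The step I expect to be the real work is this ``only if'' direction: one must justify that the alternating chain extracted from $\varphi$ can be chosen so that \emph{every} individual inclusion is a diagonal embedding --- a priori a subalgebra of a diagonal algebra produced this way need only be diagonal as an abstract Lie algebra, not a diagonal subalgebra --- and then track the density index, the symmetry index, and both Steinitz numbers through the successive doublings carefully enough to conclude that ``$2^{\infty}\mid\mathrm{Stz}(\Cal S')$'' is not just sufficient but necessary, with the residual discrepancy being exactly what $(\Cal A_1)$--$(\Cal A_3)$ allow. Handling the occasional odd trivial summand via (c), so that it never obstructs the interlacing, is the other point needing care.
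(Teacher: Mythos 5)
The paper does not prove this theorem: it is stated with the marker $\textbf{\cite{BZ}}$ and quoted verbatim from Baranov--Zhilinskii, so there is no proof in this source to compare your sketch against. What you have written is a plausible reconstruction of the argument one would expect to find in \cite{BZ}, and the overall architecture --- interlacing an $A$-exhaustion with an $O$- (resp.\ $C$-) exhaustion using self-dual modules, tracking the doubling of the $s$-multiplicities to see where the $2^{\infty}$ condition comes from, and back-and-forth between cofinal chains for the ``only if'' direction --- is the standard one for this kind of classification theorem.

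One remark on the concern you flag as ``the real work.'' The point you worry about, namely that an intermediate inclusion in an alternating chain need not a priori be diagonal, is in fact handled by a general lemma that the present paper quotes from \cite{BZ} (the unnumbered lemma just before \refcor{diagCor}): if $\hh\subset\gg\subset\ss$ are finite-dimensional classical simple Lie algebras with $\rk\hh>10$ and $\hh\subset\ss$ is diagonal, then both $\hh\subset\gg$ and $\gg\subset\ss$ are diagonal. Applying this to each sandwich $\sl(m_{i_{t-1}})\subset\so(n'_{j_t})\subset\sl(m_{i_t})$, where the outer composite is diagonal by hypothesis on the $A$-exhaustion, immediately gives that every link of the alternating chain is diagonal once the rank is large enough. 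So this step is not a gap so much as an appeal to a known structural fact. With that in place, your sketch of the ``only if'' direction --- reading off two-sidedness, symmetry, and divisibility by $2^{\infty}$ from the composite signatures, then invoking \refth{BZh1} for $(\Cal A_1)$--$(\Cal A_3)$ --- is the right shape. But since the cited source \cite{BZ} is not reproduced here, I can only confirm that the outline is reasonable, not that it matches the original proof step for step.
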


\textbf{Remark.} It is easy to see from \refth{BZh1} that a
diagonal Lie algebra $X(\Cal T)$ is finitary (i.e. isomorphic to
$\sl(\infty)$, $\so(\infty)$, or $\sp(\infty)$) if and only if
$\mathrm{Stz}(\Cal S)$ is finite.

As we see from the above classification, the density type and the
symmetry type are well-defined for a diagonal Lie algebra. We will
call an algebra \emph{pure}, \emph{dense}, or \emph{sparse} if its
sequence of triples $\Cal T$ can be chosen pure, dense, or sparse,
respectively. We will also call an algebra \emph{one-sided},
\emph{two-sided symmetric}, \emph{two-sided strongly
non-symmetric}, or \emph{two-sided weakly non-symmetric} if its
sequence of triples $\Cal T$ can be chosen with the respective
property.

For an arbitrary sequence $\Cal S=\{s_i\}_{i\geq 1}$ by
$\sl(\mathrm{Stz}(\Cal S))$ (respectively, $\so(\mathrm{Stz}(\Cal
S))$, $\sp(\mathrm{Stz}(\Cal S))$) we will denote the pure Lie
algebra $A(\{(s_i,0,0)\}_{i\geq 1})$ (resp.,
$O(\{(s_i,0,0)\}_{i\geq 1})$, $C(\{(s_i,0,0)\}_{i\geq 1})$).

We need two branching rules for Lie algebras of type $A$.
Throughout this paper $F_n^\lambda$ denotes an irreducible
$\sl(n)$-module with highest weight
$\lambda=(\lambda_1,\dots,\lambda_n)$, $\lambda_i\in\ZZ_{\geq 0}$.
Note that the isomorphism class of $F_n^\lambda$ is determined by
the differences
$\lambda_1-\lambda_2,\dots,\lambda_{n-1}-\lambda_n$.

\begin{theo} (Gelfand-Tsetlin rule \cite{Z})\label{GTrule}
Consider a subalgebra $\sl(n)\subset\sl(n+1)$ of signature
$(1,0,1)$. Then, there is an isomorphism of $\sl(n)$-modules
\begin{equation}\label{GTform}
F_{n+1}^\lambda\downarrow\sl(n)\cong\bigoplus_\mu F_n^\mu,
\end{equation} where the summation runs over all integral weights
$\mu=(\mu_1,\dots,\mu_n)$ satisfying
$\lambda_1\geq\mu_1\geq\lambda_2\geq\cdots\geq\mu_n\geq\lambda_{n+1}$.
\end{theo}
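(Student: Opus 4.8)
The plan is to deduce the rule from the classical branching $\gl(n+1)\downarrow\gl(n)$, which in turn reduces to an identity for Schur polynomials. Up to an inner automorphism of $\sl(n+1)$ the inclusion of signature $(1,0,1)$ is the standard block embedding $\sl(n)\subset\sl(n+1)$, and this extends to $\gl(n)\oplus\gl(1)\subset\gl(n+1)$ (the $\gl(1)$ being the last diagonal entry). Since all $\lambda_i\in\ZZ_{\geq 0}$, we may view $F_{n+1}^\lambda$ as the restriction to $\sl(n+1)$ of the irreducible polynomial $\gl(n+1)$-module $E^\lambda_{n+1}$ of highest weight $\lambda$; the $\sl(n)$-module structure on $F_{n+1}^\lambda$ is then obtained by restricting $E^\lambda_{n+1}$ to $\gl(n)\oplus\gl(1)$, then to $\gl(n)$, then to $\sl(n)$. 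So it suffices to decompose $E^\lambda_{n+1}\downarrow\bigl(\gl(n)\oplus\gl(1)\bigr)$ and afterwards forget the $\gl(1)$-grading.

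By the Weyl character formula the character of the irreducible polynomial $\gl(m)$-module of highest weight $\nu$ on the diagonal torus with eigenvalues $y_1,\dots,y_m$ is the Schur polynomial $s_\nu(y_1,\dots,y_m)$. Writing $t$ for the $\gl(1)$-variable $x_{n+1}$, the sought decomposition is equivalent to the polynomial identity
\[
s_\lambda(x_1,\dots,x_n,t)=\sum_\mu t^{\,|\lambda|-|\mu|}\,s_\mu(x_1,\dots,x_n),
\]
the sum being over all $\mu=(\mu_1,\dots,\mu_n)$ with $\lambda_1\geq\mu_1\geq\lambda_2\geq\cdots\geq\mu_n\geq\lambda_{n+1}$. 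Indeed, since the right-hand side exhibits $s_\lambda(x_1,\dots,x_n,t)$ as a sum over distinct $\mu$ of $s_\mu(x_1,\dots,x_n)$ times a single power of $t$, and since a finite-dimensional semisimple $\bigl(\gl(n)\oplus\gl(1)\bigr)$-module is determined by its character, this identity yields $E^\lambda_{n+1}\downarrow\bigl(\gl(n)\oplus\gl(1)\bigr)\cong\bigoplus_\mu E^\mu_n$ with $\gl(1)$ acting on the $\mu$-summand by the scalar $|\lambda|-|\mu|$; in particular this branching is multiplicity-free.

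To prove the displayed identity I would use the combinatorial formula $s_\nu=\sum_T y^T$, the sum over semistandard Young tableaux $T$ of shape $\nu$ (which agrees with the bialternant definition of $s_\nu$). In a semistandard tableau of shape $\lambda$ with entries in $\{1,\dots,n+1\}$ the cells containing $n+1$ form a horizontal strip $\lambda/\mu$ (at most one per column, each at the foot of its column), where $\mu$ is the shape of the cells with entries $\leq n$; the condition that $\lambda/\mu$ be a horizontal strip is precisely the interlacing $\lambda_1\geq\mu_1\geq\lambda_2\geq\cdots\geq\mu_n\geq\lambda_{n+1}$. Deleting the cells labelled $n+1$ leaves a semistandard tableau $S$ of shape $\mu$ with entries in $\{1,\dots,n\}$; conversely every such $\mu$ together with every semistandard filling $S$ of $\mu$ extends in a \emph{unique} way (by placing $n+1$ on $\lambda/\mu$) to a tableau of shape $\lambda$, which contributes the monomial $y^S\,t^{|\lambda|-|\mu|}$. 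Summing over all tableaux gives the identity. (One may also argue via the Jacobi--Trudi determinant or the Lindstr\"om--Gessel--Viennot lemma.)

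Finally, restricting $E^\mu_n$ from $\gl(n)$ to $\sl(n)$ gives $F^\mu_n$, whose isomorphism class depends only on the differences $\mu_1-\mu_2,\dots,\mu_{n-1}-\mu_n$. Dropping the $\gl(1)$-grading in the decomposition above and restricting to $\sl(n)$ therefore yields $F_{n+1}^\lambda\downarrow\sl(n)\cong\bigoplus_\mu F^\mu_n$ over the interlacing $\mu$, which is \eqref{GTform}; note that distinct interlacing tuples $\mu$ may give isomorphic $\sl(n)$-modules, so this decomposition need not be multiplicity-free, and the statement records the direct sum indexed by tuples exactly as produced by the $\gl$-computation. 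The only substantive step is the Schur-polynomial identity of the third paragraph (together with the compatibility of the tableau and bialternant definitions of $s_\nu$); everything else is bookkeeping. The point demanding a little care is \emph{completeness} --- that every interlacing $\mu$ occurs, and with $\gl(1)$ acting by $|\lambda|-|\mu|$ --- which is exactly what the "conversely \dots\ uniquely" clause guarantees.
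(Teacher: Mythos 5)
Your proof is correct. Note that the paper does not prove this theorem at all: it is stated as a known result with a citation to Zhelobenko's book~\cite{Z}, so there is no in-paper proof to compare against. What you have written is the standard combinatorial derivation of the Gelfand--Tsetlin branching rule: reduce the $\sl(n)\subset\sl(n+1)$ restriction to the $\gl(n+1)\downarrow\gl(n)\oplus\gl(1)$ restriction (legitimate up to an inner automorphism, and permitted since the paper's convention has $\lambda_i\in\ZZ_{\geq 0}$, so $F_{n+1}^\lambda$ comes from a polynomial $\gl(n+1)$-module), translate via Weyl's character formula into the Schur polynomial identity
\[
s_\lambda(x_1,\dots,x_n,t)=\sum_\mu t^{\,|\lambda|-|\mu|}\,s_\mu(x_1,\dots,x_n)
\]
over interlacing $\mu$, and prove that identity by the bijection peeling off the $n{+}1$-labelled cells (a horizontal strip) from semistandard tableaux. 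You also correctly flag the one subtlety in returning to $\sl(n)$: distinct interlacing $\mu$ can become isomorphic $\sl(n)$-modules, so \eqref{GTform} is a direct sum indexed by $\gl(n)$-weights and need not be multiplicity-free as an $\sl(n)$-decomposition. The argument is complete; nothing is missing.
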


Consider the $\sl(n)\oplus\sl(n)$-module $F_n^\mu\otimes F_n^\nu$.
By Theorem 2.1.1 of \cite{HTW} its restriction to
$\sl(n):=\{x\oplus x,\,x\in\sl(n)\}$ decomposes as
$\displaystyle\bigoplus_\lambda c_{\mu\nu}^\lambda F_n^\lambda$,
where $c_{\mu\nu}^\lambda$ is the Littlewood-Richardson
coefficient. One can iterate this branching rule to obtain the
decomposition for higher tensor products. Let
$c_{\mu_1\dots\mu_k}^\lambda$ denote the coefficient obtained in
this manner, so,
\begin{equation}\label{br1} F_n^{\mu_1}\otimes\cdots\otimes
F_n^{\mu_k}\downarrow\sl(n)\cong\bigoplus_\lambda
c_{\mu_1\dots\mu_k}^\lambda F_n^\lambda,
\end{equation} where the summation runs over all integral dominant weights $\lambda$ with $\lambda_i\geq 0$. We will call the numbers
$c_{\mu_1\dots\mu_k}^\lambda$ \emph{generalized
Littlewood-Richardson coefficients}.

The following branching rule was communicated to us by J.
Willenbring.
\begin{prop}\label{LRprop}
Consider a diagonal subalgebra $\sl(n)\subset\sl(kn)$ of signature
$(k,0,0)$. Then, there is an isomorphism of $\sl(n)$-modules
\begin{equation}\label{LRform}
\displaystyle F_{kn}^\lambda\downarrow\sl(n)\cong\bigoplus_\nu
(\sum_{\mu_1,\dots,\mu_k}c_{\mu_1\dots\mu_k}^\lambda
c_{\mu_1\dots\mu_k}^\nu)F_n^\nu, \end{equation} where one
summation runs over all integral dominant weights $\nu$ with
$\nu_i\geq 0$ for all $i$ and the other summation runs over all
sets of integral dominant weights $\mu_1,\dots,\mu_k$ with
$(\mu_j)_i\geq 0$ for all $i,j$.
\end{prop}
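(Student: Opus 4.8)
The plan is to realize the signature-$(k,0,0)$ embedding $\sl(n)\subset\sl(kn)$ as a composition of two maps that we already understand: first the ``outer tensor'' embedding $\sl(n)\hookrightarrow\sl(n)^{\oplus k}\hookrightarrow\sl(n^k)$ given on natural modules by $V_{n^k}\cong V_n^{\otimes k}$, and then identify $\sl(kn)$ appropriately so that $V_{kn}$ becomes a quotient/summand of the relevant tensor construction. Concretely, $V_{kn}\cong V_n\otimes \CC^k$ as a module over $\sl(n)\oplus\gl(k)$, so the restriction of an $\sl(kn)$-module $F_{kn}^\lambda$ to the subalgebra $\sl(n)\oplus\gl(k)$ is governed by $(\gl(n),\gl(k))$ Howe duality: $F_{kn}^\lambda\downarrow(\sl(n)\oplus\gl(k))\cong\bigoplus_\nu F_n^\nu\otimes G_k^\nu$, where $G_k^\nu$ is the $\gl(k)$-module with the same highest weight $\nu$ (read as a $\gl(k)$-weight), the sum running over $\nu$ with at most $\min(n,k)$ parts. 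The diagonal $\sl(n)$ of signature $(k,0,0)$ sits inside this $\sl(n)\oplus\gl(k)$ as $\sl(n)$ tensored with the \emph{diagonal torus line} of $\gl(k)$ acting by scalars, so restricting further to it simply multiplies each $F_n^\nu$ by $\dim G_k^\nu$. Hence $F_{kn}^\lambda\downarrow\sl(n)\cong\bigoplus_\nu (\dim G_k^\nu)\,F_n^\nu$.

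It remains to check that $\dim G_k^\nu$ equals $\sum_{\mu_1,\dots,\mu_k} c_{\mu_1\dots\mu_k}^\lambda c_{\mu_1\dots\mu_k}^\nu$. Here is where the generalized Littlewood–Richardson coefficients enter. The key combinatorial fact is that $\dim G_k^\nu = \sum_{\mu_1,\dots,\mu_k} c_{\mu_1\dots\mu_k}^\nu$, since iterating the decomposition of a tensor product of $k$ $\gl(k)$-modules — starting from $\nu$ and stripping off one box-column at a time, or more cleanly, writing $\dim G_k^\nu$ as the number of semistandard tableaux / the value of a character at the identity — matches exactly the iterated Littlewood–Richardson expansion that defines $c_{\mu_1\dots\mu_k}^\nu$ (the content here is that the $\gl(k)$ weight multiplicity of the zero weight in a product, or the full dimension, is computed by the same iterated LR rule as in \refeq{br1}). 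But we want the coefficient of $F_n^\nu$ to come out symmetric in $\lambda$ and $\nu$, which is forced by Howe duality itself: $F_{kn}^\lambda\downarrow\sl(n)$ has multiplicity of $F_n^\nu$ equal to $\dim G_k^\nu$, and independently the same Howe-duality statement with the roles of the two factors exchanged (or the classical Cauchy-type identity) shows this multiplicity equals $\sum_{\mu_1,\dots,\mu_k} c_{\mu_1\dots\mu_k}^\lambda c_{\mu_1\dots\mu_k}^\nu$. Alternatively, one observes $c_{\mu_1\dots\mu_k}^\lambda$ is nonzero only when $|\lambda|=\sum|\mu_j|$, and for $\lambda$ with at most $\min(n,k)$ parts one has the plethystic identity expressing $\dim G_k^\nu$ precisely as that double sum against $\lambda$ — this is the branching rule J. Willenbring communicated, and I would cite Howe duality (Howe, ``Perspectives on invariant theory'') together with \cite{HTW} for the tableau combinatorics.

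The main obstacle is the bookkeeping that translates $\gl(k)$-representation-theoretic dimensions into the iterated Littlewood–Richardson sum symmetrically in $\lambda$ and $\nu$; in particular one must be careful that the weights $\mu_j$ are allowed arbitrarily many parts a priori but the nonvanishing of $c_{\mu_1\dots\mu_k}^\lambda$ and $c_{\mu_1\dots\mu_k}^\nu$ confines them, and that passing from $\gl(n)$ to $\sl(n)$ (where weights are only defined up to the differences of coordinates) does not lose or double-count any summand. A second, more technical point is justifying that the signature-$(k,0,0)$ diagonal subalgebra is conjugate to the one sitting inside $\sl(n)\oplus\gl(k)$ as described: this follows because any two diagonal subalgebras of the same signature in $\sl(kn)$ are conjugate (the module $V_{kn}\downarrow\sl(n)$ determines the embedding up to conjugacy), so we are free to pick the Howe-dual realization. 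Once these are in place, the displayed formula \refeq{LRform} follows by equating the two computations of the multiplicity of $F_n^\nu$.
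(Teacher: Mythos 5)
Your proposed approach has a fundamental error: the Howe-duality step is wrong. The $(\mathrm{GL}(n),\mathrm{GL}(k))$ Howe duality (Cauchy identity) says that the \emph{symmetric algebra} $\mathrm{Sym}(\CC^n\otimes\CC^k)$ decomposes as $\bigoplus_\nu F_n^\nu\otimes F_k^\nu$; it does \emph{not} say that an arbitrary irreducible $\gl(kn)$-module $F_{kn}^\lambda$ restricts to $\gl(n)\times\gl(k)$ (tensor embedding) as a multiplicity-free diagonal sum $\bigoplus_\nu F_n^\nu\otimes G_k^\nu$. That restriction is governed by nontrivial branching coefficients (essentially plethysm), and the $\gl(n)$- and $\gl(k)$-highest weights that appear together need not coincide. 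A minimal counterexample is $n=k=2$, $\lambda=(1,1,0,0)$: then
$F_4^{(1,1,0,0)}\cong\Lambda^2(V_2\otimes V_2)\cong\bigl(\mathrm{Sym}^2 V_2\otimes\Lambda^2\CC^2\bigr)\oplus\bigl(\Lambda^2 V_2\otimes\mathrm{Sym}^2\CC^2\bigr)$,
in which the two summands are $F_2^{(2,0)}\otimes G_2^{(1,1)}$ and $F_2^{(1,1)}\otimes G_2^{(2,0)}$ — not of the form $F_2^\nu\otimes G_2^\nu$. So the claimed decomposition of $F_{kn}^\lambda\downarrow(\sl(n)\oplus\gl(k))$ is already false, and the subsequent deduction that the multiplicity of $F_n^\nu$ in $F_{kn}^\lambda\downarrow\sl(n)$ is $\dim G_k^\nu$ collapses.

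There is a second, independent error in the combinatorial step. The asserted identity $\dim G_k^\nu=\sum_{\mu_1,\dots,\mu_k}c_{\mu_1\dots\mu_k}^\nu$ is false: take $k=2$ and $\nu=(1,1)$; then $\dim G_2^{(1,1)}=\dim\Lambda^2\CC^2=1$, whereas the right-hand side is $c_{(1,1),\emptyset}^{(1,1)}+c_{(1),(1)}^{(1,1)}+c_{\emptyset,(1,1)}^{(1,1)}=3$. And there is a logical tension you never resolve: the target multiplicity $\sum_{\mu_1,\dots,\mu_k}c_{\mu_1\dots\mu_k}^\lambda c_{\mu_1\dots\mu_k}^\nu$ genuinely depends on $\lambda$, whereas $\dim G_k^\nu$ does not, so the two cannot in general be equal. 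The paper's proof avoids all of this by factoring the signature-$(k,0,0)$ embedding as $\sl(n)\hookrightarrow\sl(n)\oplus\cdots\oplus\sl(n)\hookrightarrow\sl(kn)$ (diagonal followed by block-diagonal), applying the block-diagonal branching rule of Theorem 2.2.1 of \cite{HTW} iteratively to obtain $F_{kn}^\lambda\downarrow\sl(n)^{\oplus k}\cong\bigoplus_{\mu_1,\dots,\mu_k}c_{\mu_1\dots\mu_k}^\lambda F_n^{\mu_1}\otimes\cdots\otimes F_n^{\mu_k}$, and then applying the diagonal tensor-product rule \refeq{br1} to each summand. Both stages genuinely produce the generalized Littlewood--Richardson coefficients, and their composition is the stated formula.
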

\begin{proof}
Consider the block-diagonal subalgebra
$\sl(l)\oplus\sl(m)\subset\sl(n)$ ($n=l+m$). By Theorem 2.2.1 of
\cite{HTW} $F_n^\lambda\downarrow\sl(l)\oplus\sl(m)$ decomposes as
$\displaystyle\bigoplus_{\mu\nu} c_{\mu\nu}^\lambda F_l^\mu\otimes
F_m^\nu$. Let now the direct sum of $k$ copies of $\sl(n)$ be a
subalgebra $\sl(kn)$ with block diagonal inclusion. By iteration
of this branching rule we see that the decomposition of
$F_{kn}^\lambda\downarrow\sl(n)\oplus\cdots\oplus\sl(n)$ is
determined by the generalized Littlewood-Richardson coefficients:
\begin{equation}\label{br2}
F_{kn}^\lambda\downarrow\sl(n)\oplus\cdots\oplus\sl(n)\cong\bigoplus_{\mu_1\dots\mu_k}
c_{\mu_1\dots\mu_k}^\lambda F_n^{\mu_1}\otimes\cdots\otimes
F_n^{\mu_k},
\end{equation}
where $sl(n)\oplus\cdots\oplus\sl(n)$ is the block-diagonal
subalgebra of $\sl(kn)$, and the summation runs over all integral
dominant weights $\mu_1,\dots,\mu_k$ with $(\mu_j)_i\geq 0$.

Consider now a subalgebra $\sl(n)\subset\sl(kn)$ of signature
$(k,0,0)$. One can obtain \refeq{LRform} as a combination of the
two branching rules \refeq{br1} and \refeq{br2}.
\end{proof}

\textbf{Remark.} In \refprop{LRprop} the sum is taken over all
integral dominant weights $\nu$ with $\nu_i\in\ZZ_{\geq 0}$ for
all $i$. In order for $F_n^\nu$ to have a non-zero coefficient in
\refeq{LRform} both Littlewood-Richardson coefficients
$c_{\mu_1\dots\mu_k}^\lambda$ and $c_{\mu_1\dots\mu_k}^\nu$ must
be non-zero for some $\mu_1,\dots,\mu_k$. But for that we must
have $\displaystyle\sum_{i=1}^{kn}\lambda_i=\sum_{i=1}^n\nu_i$.
Therefore the summation in \refeq{LRform} may be taken to run over
only those weights $\nu$ with fixed
$\displaystyle\sum_{i=1}^n\nu_i$. Hence all modules $F_n^\nu$
which are present in \refeq{LRform} with non-zero coefficients are
pairwise non-isomorphic. Indeed, if $F_n^{\nu'}\cong F_n^\nu$ both
have non-zero coefficients in \refeq{LRform}, then the weight
$\nu'$ can be obtained by shifting the weight $\nu$ by an integer,
so $\displaystyle\sum_{i=1}^n\nu_i=\sum_{i=1}^n\nu'_i$ implies
$\nu'=\nu$. This argument allows us to refer to a non-zero
coefficient
$\displaystyle(\sum_{\mu_1,\dots,\mu_k}c_{\mu_1\dots\mu_k}^\lambda
c_{\mu_1\dots\mu_k}^\nu)$ as the multiplicity of $F_n^\nu$ in
\refeq{LRform}.
\begin{corollary}\label{LRcor}
For a diagonal subalgebra $\sl(n)\subset\sl(kn)$ of signature
$(k,0,0)$ the restriction $F_{kn}^\lambda\downarrow\sl(n)$ has a
submodule with highest weight
$$(\nu_1,\dots,\nu_n)=(\lambda_1+\cdots+\lambda_k,\lambda_{k+1}+\cdots+\lambda_{2k},\dots,\lambda_{kn-k+1}+\cdots+\lambda_{kn}).$$
\end{corollary}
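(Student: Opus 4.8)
The plan is to extract the claimed highest weight as a ``top corner'' of the decomposition in \refeq{LRform} by exhibiting a single choice of the auxiliary weights $\mu_1,\dots,\mu_k$ for which both generalized Littlewood--Richardson coefficients $c_{\mu_1\dots\mu_k}^\lambda$ and $c_{\mu_1\dots\mu_k}^\nu$ are nonzero, where $\nu=(\lambda_1+\cdots+\lambda_k,\lambda_{k+1}+\cdots+\lambda_{2k},\dots,\lambda_{kn-k+1}+\cdots+\lambda_{kn})$. Concretely, first I would split the $kn$ parts of $\lambda$ into $n$ consecutive blocks of size $k$, and take $\mu_j$ to be the partition obtained from the $j$-th entry of each block, i.e. $\mu_j=(\lambda_j,\lambda_{k+j},\lambda_{2k+j},\dots,\lambda_{kn-k+j})$ for $j=1,\dots,k$. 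Each $\mu_j$ is a dominant weight with nonnegative entries since $\lambda$ is, and one checks directly from the definition of $c^\lambda_{\mu_1\dots\mu_k}$ that this particular arrangement witnesses a nonzero coefficient.

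The two nonvanishing statements I need are of slightly different flavour. For $c_{\mu_1\dots\mu_k}^\lambda\neq 0$: recall that $c_{\mu_1\dots\mu_k}^\lambda$ counts (with multiplicity) the appearance of $F_n^\lambda$ inside $F_n^{\mu_1}\otimes\cdots\otimes F_n^{\mu_k}$ restricted to the diagonal $\sl(n)$. But in the iterated branching rule \refeq{br2}, $F_{kn}^\lambda$ restricted to the block-diagonal $\sl(n)^{\oplus k}$ contains $F_n^{\mu_1}\otimes\cdots\otimes F_n^{\mu_k}$ exactly when the $\mu_j$ arise as an admissible ``Gelfand--Tsetlin-type'' slicing of $\lambda$; the block decomposition I chose is precisely such an admissible slicing (the interlacing inequalities reduce to $\lambda_i\ge\lambda_{i+1}$, which hold). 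For $c_{\mu_1\dots\mu_k}^\nu\neq 0$: here I want $F_n^\nu$ to occur in $F_n^{\mu_1}\otimes\cdots\otimes F_n^{\mu_k}\downarrow\sl(n)$, and since $\nu=\mu_1+\cdots+\mu_k$ entrywise, this is the standard Littlewood--Richardson fact that the ``Cartan component'' (the component with highest weight equal to the sum of the highest weights) occurs in a tensor product of irreducibles with multiplicity one. Iterating the two-factor case $c^{\mu+\nu}_{\mu\nu}=1$ gives $c_{\mu_1\dots\mu_k}^{\mu_1+\cdots+\mu_k}\geq 1$.

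Combining these, the coefficient $\sum_{\mu_1,\dots,\mu_k}c_{\mu_1\dots\mu_k}^\lambda c_{\mu_1\dots\mu_k}^\nu$ in \refeq{LRform} is nonzero for this $\nu$, so $F_n^\nu$ is a submodule of $F_{kn}^\lambda\downarrow\sl(n)$, which is exactly the assertion (the fact that $\nu$ as written is already dominant with nonnegative entries is immediate from dominance of $\lambda$, and the Remark preceding the corollary guarantees the coefficient is a genuine multiplicity). The main obstacle I anticipate is making the first nonvanishing, $c_{\mu_1\dots\mu_k}^\lambda\neq 0$, clean: one must be careful that the iterated rule \refeq{br2} is applied to the correct block-diagonal embedding and that the chosen slicing genuinely satisfies all the interlacing constraints at every stage of the iteration, rather than just the outermost one. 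A safe route is to avoid \refeq{br2} and instead argue entirely within $\sl(kn)$: the block decomposition corresponds to a chain $\sl(n)\subset\sl(2n)\subset\cdots\subset\sl(kn)$ of signature-$(1,0,n)$ type steps, so by repeated application of the Gelfand--Tsetlin rule \refth{GTrule} one reads off that $F_{kn}^\lambda$ restricted down this chain contains the weight whose $i$-th coordinate is the sum over the $i$-th block; this is the same $\nu$, and it sidesteps any combinatorics with the $\mu_j$.
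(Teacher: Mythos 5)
Your main argument is exactly the paper's: you pick the same ``strided'' auxiliary weights $\mu_j=(\lambda_j,\lambda_{k+j},\dots,\lambda_{kn-k+j})$ (the paper uses index $i$ instead of $j$ but it is the identical choice) and then show both coefficients $c_{\mu_1\dots\mu_k}^\lambda$ and $c_{\mu_1\dots\mu_k}^\nu$ are nonzero. The paper stops at ``it is easy to check''; your Cartan-component observation for the second coefficient --- that $\nu=\mu_1+\cdots+\mu_k$ entrywise and the Cartan component of a tensor product always occurs with multiplicity one --- is a correct and genuinely concrete way to discharge that half. For the first coefficient your reasoning is vaguer (``admissible Gelfand--Tsetlin-type slicing''); the branching of $\sl(kn)$ to the block-diagonal $\sl(n)^{\oplus k}$ is governed by iterated Littlewood--Richardson coefficients, not by a Gelfand--Tsetlin interlacing rule, so the phrase ``the interlacing inequalities reduce to $\lambda_i\ge\lambda_{i+1}$'' does not carry the weight you want it to. The conclusion is still correct, but this step is not yet a proof; a genuine verification would exhibit an explicit Littlewood--Richardson filling of $\lambda/\mu_1$ of content $\mu_2$ (and iterate), which is what ``easy to check'' in the paper implicitly asks for.

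The ``safe route'' at the end, however, has a genuine error and should be discarded. The chain $\sl(n)\subset\sl(2n)\subset\cdots\subset\sl(kn)$ of signature-$(1,0,n)$ steps composes to a subalgebra $\sl(n)\subset\sl(kn)$ of signature $(1,0,(k-1)n)$, which is \emph{not} the diagonal subalgebra of signature $(k,0,0)$ in the statement. Applying the Gelfand--Tsetlin rule \refth{GTrule} down such a chain never produces sums of $\lambda$-entries: it only produces interlaced truncations $\mu$ with $\lambda_1\geq\mu_1\geq\lambda_2\geq\cdots$, so you would never read off the block sums $\nu_i=\lambda_{k(i-1)+1}+\cdots+\lambda_{ki}$. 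The block sums really do require the diagonal (rather than corner) embedding, and hence the tensor-product/LR machinery of \refprop{LRprop}; there is no way to sidestep the combinatorics with the $\mu_j$ via a corner chain.
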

\begin{proof}
Indeed, if we set
$\mu_i=(\lambda_i,\lambda_{k+i},\dots,\lambda_{kn-k+i})$ for
$i\in\{1,\dots,k\}$, then it easy to check that both coefficients
$c_{\mu_1\dots\mu_k}^\lambda$ and $c_{\mu_1\dots\mu_k}^\nu$ are
non-zero, and therefore the highest weight module $F_n^\nu$ is
present in \refeq{LRform} with non-zero multiplicity.
\end{proof}

If $\ss$ and $\gg$ are two diagonal Lie algebras, then
constructing a homomorphism $\theta:\ss\rightarrow\gg$ is
equivalent to constructing commutative diagram
\begin{equation}\label{diag0}
\xymatrix{
\ss_1\ar[d]_{\theta_1}\ar[r]^{\phi_1}&\ss_2\ar[d]_{\theta_2}\ar[r]^{\phi_2}&\dots\\
\gg_1\ar[r]^{\psi_1}&\gg_2\ar[r]^{\psi_2}&\dots\\
}
\end{equation} for some exhaustions
$\displaystyle\ss_1\stackrel{\phi_1}{\rightarrow}\ss_2\stackrel{\phi_2}{\rightarrow}\dots$
and
$\displaystyle\gg_1\stackrel{\psi_1}{\rightarrow}\gg_2\stackrel{\psi_2}{\rightarrow}\dots$
of $\ss$ and $\gg$ respectively. An injective homomorphism
$\theta$ is called \emph{diagonal} if all $\theta_i$ can be chosen
diagonal for sufficiently large $i$.

To deal with diagonal homomorphisms we will need the following
result.

\begin{lemma}\label{BZhLemma}
Let $\varepsilon_1:\ss_1\rightarrow\ss_2$ and
$\varepsilon_2:\ss_1\rightarrow\gg$ be diagonal injective
homomorphisms of finite-dimensional simple classical Lie algebras
of signatures $(l,r,z)$ and $(p,q,u)$ respectively. Let a triple
of non-negative integers $(p',q',u')$ satisfy the following
conditions:
$$
p+q=(l+r)(p'+q'),\,p-q=(l-r)(p'-q'),\,n=n_2(p'+q')+u',
$$
where $n$ and $n_2$ are the dimensions of the natural $\gg$- and
$\ss_2$-modules respectively. Then, under the assumption that
$\ss_2$ and $\gg$ are of the same type $X$, there exists a
diagonal injective homomorphism $\theta:\ss_2\rightarrow\gg$ of
signature $(p',q',u')$ such that
$\varepsilon_2=\theta\circ\varepsilon_1$. If $\ss_2$ and $\gg$ are
of different types $X$ and $Y$, the statement holds under the
following additional conditions on the triple $(p',q',u')$:
\begin{align*}
&p'=q'\text{ if }(X,Y)=(A,O)\text{ or }(X,Y)=(A,C);\\&p'\text{ is
even if }(X,Y)=(O,C)\text{ or }(X,Y)=(C,O).
\end{align*}
\end{lemma}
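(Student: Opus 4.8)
The plan is to construct the homomorphism $\theta$ explicitly by choosing, inside $\gg$, a classical subalgebra isomorphic to $\ss_2$ through which $\varepsilon_2$ factors. First I would recall that a diagonal embedding of signature $(p,q,u)$ of $\ss_1$ into $\gg$ is realized, in terms of the natural modules, by the isomorphism $V_\gg\downarrow\ss_1\cong p\,V_{\ss_1}\oplus q\,V_{\ss_1}^*\oplus u\,T_{\ss_1}$; likewise $\varepsilon_1$ is given by $V_{\ss_2}\downarrow\ss_1\cong l\,V_{\ss_1}\oplus r\,V_{\ss_1}^*\oplus z\,T_{\ss_1}$. The arithmetic hypotheses $p+q=(l+r)(p'+q')$ and $p-q=(l-r)(p'-q')$ are exactly what is needed so that the multiplicities match up after inflating through a putative $(p',q',u')$-embedding of $\ss_2$: indeed, if $\theta:\ss_2\to\gg$ has signature $(p',q',u')$ then $V_\gg\downarrow\ss_2\cong p'V_{\ss_2}\oplus q'V_{\ss_2}^*\oplus u'T_{\ss_2}$, and restricting further to $\ss_1$ via $\varepsilon_1$ gives $(lp'+rq')V_{\ss_1}\oplus(rp'+lq')V_{\ss_1}^*\oplus(\text{trivials})$, whose non-trivial multiplicities are $lp'+rq'$ and $rp'+lq'$; the two displayed equations say precisely that $\{lp'+rq',rp'+lq'\}=\{p,q\}$ (with the natural/dual assignment consistent with the convention $l\ge r$, $p'\ge q'$), and $n=n_2(p'+q')+u'$ records the dimension count for the trivial summand.

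Having identified the correct multiplicities, the second step is to actually build $\theta$. I would take the image $\varepsilon_2(\ss_1)\subset\gg$ and enlarge it: on the natural $\gg$-module $V_\gg$, group the $p$ copies of $V_{\ss_1}$ into $p'$ blocks of $l$ copies plus, within each such block, $r$ copies of $V_{\ss_1}^*$ coming from the $q$-part (this regrouping is where $p=(l+r)p'+(\text{cross term})$ is used), so that each block carries a natural action of a copy of $\ss_2$ extending the diagonal $\ss_1$-action of signature $(l,r,z)$; the $u'$ leftover trivial directions absorb the trivial pieces left over after the $z$-trivials inside each block are accounted for, which is the content of $n=n_2(p'+q')+u'$. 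This produces an embedding $\sl(V_{\ss_2})$-type subalgebra of $\gg$ through which $\varepsilon_2$ factors, and by construction $\varepsilon_2=\theta\circ\varepsilon_1$ with $\theta$ of signature $(p',q',u')$. When $\ss_2$ and $\gg$ have the same type $X$ one must check the constructed subalgebra preserves the relevant bilinear or symplectic form on $V_\gg$; this is automatic once the blocks are arranged to pair natural with dual copies respecting the form, and it is here that the type-$O$ versus type-$C$ bookkeeping enters.

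The third step handles the different-types case. If $(X,Y)=(A,O)$ or $(A,C)$, then $V_\gg$ carries a symmetric or symplectic form, and a type-$A$ subalgebra $\sl(W)$ preserves such a form on $W\oplus W^*$; the condition $p'=q'$ is exactly what allows the $p'$ natural and $q'$ dual copies of $V_{\ss_2}$ to be paired off into form-preserving blocks $W\oplus W^*$, so that $\ss_2=\sl(W)$ sits inside $\so$ or $\sp$. If $(X,Y)=(O,C)$ or $(C,O)$, the subalgebra $\ss_2$ already preserves a form of one sign on $V_{\ss_2}$, and to land inside a group preserving a form of the other sign one takes an even number of copies and uses the standard fact that $V\oplus V$ carries a form of the opposite parity; this is why $p'$ must be even. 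In each case one verifies the signature of the resulting $\theta$ is $(p',q',u')$ and that the factorization persists.

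The main obstacle I expect is the combinatorial regrouping in the second step: one must show that the arithmetic identities $p+q=(l+r)(p'+q')$, $p-q=(l-r)(p'-q')$, $n=n_2(p'+q')+u'$ not only force the right multiplicities numerically but can be realized by an actual block decomposition of $V_\gg$ that is simultaneously $\ss_1$-equivariant for $\varepsilon_2$, extends to an $\ss_2$-action, and — in the equal-type case — respects the ambient form. Making the bookkeeping of natural-versus-dual copies precise under the standing conventions $l\ge r$ and $p'\ge q'$, and checking that the trivial summand has exactly the predicted dimension $u'$ rather than merely the right parity, is the delicate point; everything else is a matter of assembling standard facts about diagonal embeddings and invariant forms.
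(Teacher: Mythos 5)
The paper dispatches this lemma essentially by citation: it invokes Lemma 2.6 of \cite{BZ}, which handles the case where $\ss_1$, $\ss_2$, $\gg$ are all of one type, and then observes that the proof of that cited result goes through for mixed types as long as $\ss_2$ actually admits \emph{some} diagonal injective homomorphism into $\gg$ of signature $(p',q',u')$ — which the parity/equality side conditions guarantee. You instead reconstruct the content of the cited lemma directly, so yours is a genuinely different, more self-contained route. Your multiplicity bookkeeping is correct: the two linear identities force $lp'+rq'=p$ and $rp'+lq'=q$ (not merely as an unordered pair, given the standing convention $l\ge r$, $p'\ge q'$, $p\ge q$), and the dimension identity $n=n_2(p'+q')+u'$ pins down the trivial summand as $u'=u-(p'+q')z$. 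The block-regrouping construction is sound in principle, and the factorization $\varepsilon_2=\theta\circ\varepsilon_1$ does follow once the block identifications are chosen to intertwine the fixed $\ss_1$-actions rather than arbitrary ones, a point worth stating explicitly.

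Where you would still need to do genuine work — and you flag it yourself — is the form compatibility when $\gg$ has type $O$ or $C$. It is not automatic that a regrouping of the $\ss_1$-isotypic pieces of $V_\gg$ can be chosen so that the ambient symmetric or alternating form restricts nondegenerately and with the correct sign to each $V_{\ss_2}$-block. One must argue that the form pairs the $V_{\ss_1}$- and $V_{\ss_1}^*$-isotypic components (or is nondegenerate on a self-dual isotypic component), then build each block as an isotropic or form-perfect subspace accordingly; this is precisely where the conditions $p'=q'$ (for $(A,O)$ and $(A,C)$) and $p'$ even (for $(O,C)$ and $(C,O)$) enter, and in the equal-type $O$ and $C$ cases one also uses $q'=0$ since the natural $\ss_2$-module is self-dual. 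None of what you wrote is wrong, but at those points it is a plan rather than a proof, whereas the paper's reference to Lemma 2.6 of \cite{BZ} absorbs exactly that verification. If you want your argument to stand on its own, the orthogonal/symplectic regrouping needs to be spelled out, along the lines of the Malcev--Dynkin analysis of invariant forms on isotypic decompositions.
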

\begin{proof}
Lemma 2.6 in \cite{BZ} states the same result in case all Lie
algebras $\ss_1$, $\ss_2$, $\gg$ are of the same type. The proof
of Lemma 2.6 in \cite{BZ} works also when the three algebras are
not of the same type, but only if $\ss_2$ can be mapped into $\gg$
by an injective homomorphism of signature $(p',q',u')$. It is easy
to check that the additional conditions guarantee the existence of
such a homomorphism.
\end{proof}

Consider the diagram in \refeq{diag0} without the commutativity
assumption. \refle{BZhLemma} implies that if all $\theta_i$ are
diagonal injective homomorphism such that for all $i\geq 1$ the
two diagonal injective homomorphisms $\psi_i\circ\theta_i$ and
$\theta_{i+1}\circ\phi_i$ of $\ss_i$ into $\gg_{i+1}$ have the
same signature, then there are diagonal injective homomorphisms
$\theta'_i$ with the same property making the diagram commutative.
Later on in this paper when constructing diagrams as in
\refeq{diag0} in concrete situations, we will check commutativity
by showing only that the signatures of $\psi_i\circ\theta_i$ and
$\theta_{i+1}\circ\phi_i$ coincide for all $i\geq 1$. It will then
be assumed that $\theta_i$ are replaced by corresponding diagonal
injective homomorphisms $\theta'_i$ making the diagram commute.

The following result can be found in \cite{BZ} (see also all
references in there, for instance \cite{B2}).

\begin{lemma}
Let $\hh\subset\gg\subset\ss$ be finite-dimensional classical
simple Lie algebras, $\rk\,\hh>10$. Assume that the inclusion
$\hh\subset\ss$ is diagonal. Then the inclusions $\hh\subset\gg$
and $\gg\subset\ss$ are also diagonal.
\end{lemma}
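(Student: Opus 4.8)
The plan is to peel the statement into two halves, reduce to a purely representation-theoretic claim about irreducible modules, and then attack that claim by dimension and index estimates. First note that, since $\hh$ is semisimple, it is reductive in $\gg$, so a maximal toral subalgebra of $\hh$ consists of elements semisimple in $\gg$ and therefore lies in a Cartan subalgebra of $\gg$; hence $\rk\gg\geq\rk\hh>10$ and in particular $\gg$ and $\ss$ are classical. I would then show it suffices to prove that $\gg\subset\ss$ is diagonal: if $V_\ss\downarrow\gg\cong V_\gg^{\oplus a}\oplus(V_\gg^{\ast})^{\oplus b}\oplus T^{\oplus c}$ then $a+b\geq1$ because $\gg$ acts faithfully on $V_\ss$, so restricting this isomorphism to $\hh$ exhibits $V_\gg\downarrow\hh$ (or $V_\gg^{\ast}\downarrow\hh$ if $a=0$) as a direct summand of $V_\ss\downarrow\hh$; as $\hh\subset\ss$ is diagonal, $V_\ss\downarrow\hh$ is a sum of copies of $V_\hh$, $V_\hh^{\ast}$ and the trivial module, hence so is $V_\gg\downarrow\hh$ (dualize if $a=0$), which says exactly that $\hh\subset\gg$ is diagonal.

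To prove $\gg\subset\ss$ is diagonal, decompose $V_\ss\downarrow\gg\cong\bigoplus_j W_j$ into irreducible $\gg$-modules; restricting each $W_j$ to $\hh$ and using once more that $\hh\subset\ss$ is diagonal, every irreducible $\hh$-constituent of every $W_j$ lies in $\{V_\hh,V_\hh^{\ast},T\}$. So the whole lemma follows from the claim: \emph{if $W$ is an irreducible module over a classical simple $\gg$, all of whose $\hh$-constituents lie in $\{V_\hh,V_\hh^{\ast},T\}$ for a classical simple subalgebra $\hh\subseteq\gg$ with $\rk\hh>10$, then $W\in\{V_\gg,V_\gg^{\ast},T\}$.} The first ingredient for the claim is the elementary observation that for a classical simple $\kk$ with $\rk\kk>10$ the only irreducible $\kk$-modules of dimension $\leq2\,\rk\kk+1$ are $V_\kk$, $V_\kk^{\ast}$ and the trivial module --- every other module ($\Lambda^2V_\kk$, $S^2V_\kk$, the adjoint, the spin modules, and so on) is strictly larger once $\rk\kk>10$; this, together with the exceptional isomorphisms in low rank ($\so(6)\cong\sl(4)$, $\so(5)\cong\sp(4)$, triality for $\so(8)$, $\so(3)\cong\sl(2)\cong\sp(2)$, the small irreducible embeddings), is exactly what the crude bound $10$ is there to control. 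Granting this, $W\downarrow\hh$ has all constituents of dimension $\leq2\,\rk\hh+1$, and since $W$ is faithful over $\gg$ it is faithful over $\hh$, so $W\downarrow\hh$ contains a copy of $V_\hh$ or $V_\hh^{\ast}$.

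I would then prove the claim in two parts. Part (a): show that $V_\gg\downarrow\hh$ is itself a sum of copies of $V_\hh$, $V_\hh^{\ast}$ and the trivial module, i.e.\ $\hh\subset\gg$ is diagonal. Part (b): assuming $\hh\subset\gg$ is diagonal, show that any \emph{bad} $W$ (highest weight other than $0$ and the fundamental weights attached to $V_\gg$ and $V_\gg^{\ast}$) restricts to $\hh$ with a constituent outside $\{V_\hh,V_\hh^{\ast},T\}$. Part (b) is essentially bookkeeping: $W$ is a subquotient of a Schur/tensor construction built from $V_\gg$ (for $O$-type $\gg$ there is also the spin case, which the index estimate below removes outright), so by the branching rules \refth{GTrule}, \refprop{LRprop} and \refcor{LRcor} (and their $C$- and $O$-analogues) the "leading" $\hh$-constituent of $W\downarrow\hh$ is the same construction applied to a copy of $V_\hh$ inside $V_\gg\downarrow\hh$, and for a bad $W$ this is again bad over $\hh$ since $\rk\hh>10$; the model case $\gg=\sl(V_\gg)$, $W=\Lambda^2V_\gg$ is just the remark that $\Lambda^2(V_\gg\downarrow\hh)$ contains $\Lambda^2V_\hh$ or $\Lambda^2V_\hh^{\ast}$. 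Dynkin's index is the tool that makes "bad" quantitative: the multiplicativity $I_\hh^{\sl(W)}=I_\hh^\gg\,I_\gg^{\sl(W)}$ and additivity over direct sums (\refprop{Dprop}, \refcor{Dcor}) together with $I_\kk(U)=\frac{\dim U}{\dim\kk}\langle\lambda,\lambda+2\rho\rangle$ convert the constituent conditions into numerical inequalities that fail for $\rk\hh>10$, which in particular eliminates the large-index modules (spin modules, high tensor powers).

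The main obstacle is Part (a): deducing, from the bare existence of one faithful, nicely restricting $\gg$-module, that the natural module $V_\gg$ already restricts to $\hh$ as a sum of copies of $V_\hh$, $V_\hh^{\ast}$ and the trivial module. This is a genuine structural statement about how a large-rank simple $\hh$ can sit inside a classical $\gg$, and carrying it out rests on Dynkin's classification of simple subalgebras of simple Lie algebras \cite{D} --- peeling off regular subalgebras, handling irreducible ("$S$"-) subalgebras separately --- with the hypothesis $\rk\hh>10$ being exactly what kills the finitely many sporadic small-rank embeddings that would otherwise be counterexamples. Because a self-contained argument is long, the usual course, which this paper follows, is to quote the statement from \cite{BZ} and the references therein (for instance \cite{B2}).
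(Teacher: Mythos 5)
There is no proof in the paper to compare against: the Lemma is stated with the remark ``The following result can be found in \cite{BZ} (see also all references in there, for instance \cite{B2})'' and is used as a quoted black box. You correctly identify this at the end of your proposal, so on the level of ``what the paper actually does,'' you and the paper agree --- the burden is discharged by citation, not by argument.

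On the mathematics of your sketch: the opening reduction is sound. If $\gg\subset\ss$ is diagonal, then $V_\gg$ or $V_\gg^{\ast}$ really is a direct summand of $V_\ss\downarrow\gg$ by faithfulness of the action of the simple $\gg$ on $V_\ss$, and restricting a summand of a module with only good $\hh$-constituents again gives a module with only good $\hh$-constituents, so $\hh\subset\gg$ is diagonal; your dualization remark covers the $a=0$ case. The low-dimension observation is also correct for classical $\kk$ of rank $>10$, and the index multiplicativity/additivity from \refprop{Dprop} and \refcor{Dcor} is the right numerical tool for killing spin modules and higher tensor constructions in ``Part (b).''

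The genuine gap, which you flag yourself, is Part (a): showing $\hh\subset\gg$ is diagonal is not something your reduction provides, it is the other half of the Lemma restated. Your reduction turns the two-part Lemma into the single statement ``$\gg\subset\ss$ is diagonal,'' and then your proof of that statement presupposes exactly the half you eliminated. As you say, breaking this loop requires an independent structural input --- Dynkin's classification of subalgebras of classical Lie algebras, or the equivalent machinery in \cite{B2}, with $\rk\hh>10$ there to suppress the low-rank sporadic embeddings. Since the paper itself takes this Lemma from \cite{BZ}/\cite{B2} without reproducing the argument, there is no internal proof for me to hold your sketch against; your sketch is a plausible roadmap for how such a proof would be organized, but it is a reduction to the hard step rather than a proof of it, and you are right to conclude by citing the literature as the paper does.
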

\begin{corollary}\label{diagCor}
Let $\hh\subset\gg\subset\ss$ be infinite-dimensional diagonal Lie
algebras. Assume that the inclusion $\hh\subset\ss$ is diagonal.
Then the inclusions $\hh\subset\gg$ and $\gg\subset\ss$ are also
diagonal.
\end{corollary}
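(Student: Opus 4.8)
The plan is to deduce Corollary \ref{diagCor} from the preceding finite-dimensional lemma by passing to suitable exhaustions. First I would fix exhaustions $\hh=\cup_i\hh_i$, $\gg=\cup_i\gg_i$, $\ss=\cup_i\ss_i$ by finite-dimensional classical simple Lie subalgebras, chosen along a single cofinal chain so that for each $i$ we have $\hh_i\subset\gg_i\subset\ss_i$ (this is possible because all three algebras are countable-dimensional and $\hh\subset\gg\subset\ss$: given $\hh_i$, enlarge it inside $\gg$ to some $\gg_i\supset\hh_i$, then enlarge inside $\ss$ to some $\ss_i\supset\gg_i$, and interleave with exhausting families of each algebra to guarantee cofinality). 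Passing to a subexhaustion, I may also assume $\rk\,\hh_i>10$ for all $i$, since $\dim\hh=\infty$.

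Next, the hypothesis that $\hh\subset\ss$ is diagonal means, by definition, that $\hh$ admits \emph{some} exhaustion by simple subalgebras with all inclusions into the corresponding terms of an exhaustion of $\ss$ diagonal. The key point is that diagonality of an inclusion $\hh\subset\ss$ of infinite-dimensional diagonal Lie algebras can be tested on \emph{any} pair of cofinal exhaustions: given two exhaustions of $\hh$ (resp.\ $\ss$), one interlaces them, and Lemma \ref{BZhLemma} together with the transitivity/index properties from Proposition \ref{Dprop} shows that if the inclusions are diagonal for one choice then they are for a common refinement, hence for the other. (Alternatively, one invokes the characterization via locally finite associative algebras used in the proof of Proposition 2.1, which is manifestly independent of the choice of exhaustion.) So, after possibly passing once more to a subexhaustion, I may assume that every inclusion $\hh_i\subset\ss_i$ in the chosen chain is diagonal.

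Now apply the finite-dimensional lemma term by term: for each $i$, from $\hh_i\subset\gg_i\subset\ss_i$ with $\rk\,\hh_i>10$ and $\hh_i\subset\ss_i$ diagonal, we conclude that $\hh_i\subset\gg_i$ and $\gg_i\subset\ss_i$ are diagonal. Since $\hh=\cup_i\hh_i\subset\cup_i\gg_i=\gg$ is then exhibited as a direct limit of diagonal inclusions of simple subalgebras, the inclusion $\hh\subset\gg$ is diagonal by definition; likewise $\gg\subset\ss$ is diagonal. This completes the argument.

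The main obstacle is the second step: justifying that diagonality of $\hh\subset\ss$ is detected along the particular cofinal chain I have built, rather than along the \emph{a priori} unrelated exhaustions witnessing the hypothesis. Concretely, one must argue that if $\hh\subset\ss$ is diagonal and $\hh=\cup\hh_i$, $\ss=\cup\ss_i$ are arbitrary exhaustions by simple subalgebras with $\hh_i\subset\ss_i$, then the inclusions $\hh_i\subset\ss_i$ are eventually diagonal; this is where the interlacing trick and Lemma \ref{BZhLemma} (controlling how signatures compose) do the real work, and the rest of the proof is formal.
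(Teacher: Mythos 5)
Your overall strategy—reducing to the finite-dimensional lemma stated just before the corollary by passing to suitable compatible exhaustions—is the intended one and your argument is correct, but the second step is both over-engineered and slightly mis-tooled. It is unnecessary to prove that diagonality of $\hh\subset\ss$ can be tested against \emph{arbitrary} exhaustions: instead, keep the exhaustions $\hh=\cup_n\hh_n$, $\ss=\cup_n\ss_n$ that witness the hypothesis (so each $\hh_n\subset\ss_n$ is diagonal and, passing to a tail, classical with $\rk\,\hh_n>10$), and interleave only the middle algebra, choosing $i_1<i_2<\cdots$, $k_1<k_2<\cdots$ and classical simple $\gg^{(n)}$ exhausting $\gg$ with $\hh_{i_n}\subset\gg^{(n)}\subset\ss_{k_n}$ and $k_n\geq i_n$. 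Then each $\hh_{i_n}\subset\ss_{k_n}$ factors as $\hh_{i_n}\subset\ss_{i_n}\subset\ss_{k_n}$, a composition of diagonal maps, hence is diagonal with no further argument, and the lemma applies termwise. If one does insist on the exhaustion-independence claim, the correct tool is the finite-dimensional lemma itself (sandwich a term of one exhaustion between two terms of the other and apply the lemma), not Lemma \ref{BZhLemma}—which realizes prescribed signatures rather than proving a map is diagonal—nor the index identities of Proposition \ref{Dprop}, which give only a necessary numerical condition; and the associative-algebra characterization from Proposition 2.1 concerns diagonality of a Lie algebra, not of an injective homomorphism, so it does not apply directly either.
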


We conclude this section by introducing a notion of equivalence of
infinite-dimensional Lie algebras. We say that $\gg_1$ is
\emph{equivalent} to $\gg_2$ ($\gg_1\sim\gg_2$) if there exist
injective homomorphisms $\gg_1\rightarrow\gg_2$ and
$\gg_2\rightarrow\gg_1$. For finite-dimensional Lie algebras this
equivalence relation is the same as isomorphism, but this is no
longer the case for infinite-dimensional Lie algebras.

\section{Classification of locally simple subalgebras of diagonal Lie algebras}
In this section all diagonal Lie algebras considered are assumed
to be infinite dimensional.

We start the classification by asking whether $\sl(\infty)$ admits
an injective homomorphism into any non-finitary diagonal Lie
algebra. As it turns out, the most basic example sufficed to
answer this question, as we were able to construct an injective
homomorphism of $\sl(\infty)$ into $\sl(2^\infty)$, so the answer
is yes. The following construction was suggested to us by I.
Dimitrov.

Let $F_n$ be the natural representation of $\sl(n)$. Note that
under the injective homomorphism $\sl(n)\to\sl(n+1)$ of signature
$(1,0,1)$, the exterior algebra $\bigwedge^{\cdot}(F_{n+1})$
decomposes as two copies of $\bigwedge^{\cdot}(F_n)$ as an
$\sl(n)$-module. Fix a map $\theta_n:\sl(n)\to\sl(2^n)$ such that
the natural representation of $\sl(2^n)$ decomposes as
$\bigwedge^{\cdot}(F_n)$ as an $\sl(n)$-module. Then there exists
a map $\theta_{n+1}:\sl(n+1)\to\sl(2^{n+1})$ such that the natural
representation of $\sl(2^{n+1})$ decomposes as
$\bigwedge^{\cdot}(F_{n+1})$ as an $\sl(n+1)$-module making the
following diagram commute:
\begin{equation}\label{diag1}
\xymatrix{
\sl(2)\ar[d]_{\theta_2}\ar[r]&\dots\ar[r]&\sl(n)\ar[d]_{\theta_n}\ar[r]&\sl(n+1)\ar[d]_{\theta_{n+1}}\ar[r]&\dots\\
\sl(2^2)\ar[r]&\dots\ar[r]&\sl(2^n)\ar[r]&\sl(2^{n+1})\ar[r]&\dots,\\
}
\end{equation} where the lower row consists of injective homomorphisms of
signature $(2,0,0)$. Therefore by induction, the diagram yields an
injective homomorphism of $\sl(\infty)$ into $\sl(2^{\infty})$.

We will prove now that similar injective homomorphisms exist in a
more general setting. The following result will be used later to
prove that in fact any finitary diagonal Lie algebra can be
similarly mapped into any diagonal Lie algebra.
\begin{prop}\label{prop1} $\sl(\infty)$ admits an injective homomorphism into any pure one-sided Lie algebra $\ss$ of type $A$.
\end{prop}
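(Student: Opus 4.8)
The plan is to mimic the construction of \refeq{diag1} but in a variable-base setting. A pure one-sided Lie algebra $\ss$ of type $A$ has, by definition, an exhaustion $\ss=\cup_i\sl(m_i)$ in which every inclusion $\sl(m_i)\subset\sl(m_{i+1})$ has signature $(k_i,0,0)$ with $k_i=m_{i+1}/m_i\ge 2$ and $z_i=0$ (one-sidedness forces $r_i=0$, and purity lets us assume the density index is already stabilised, but what we actually use is just that there are no trivial summands and $s_i=l_i=k_i$). We will build a commutative diagram
\begin{equation*}
\xymatrix{
\sl(2)\ar[d]_{\theta_1}\ar[r]&\sl(3)\ar[d]_{\theta_2}\ar[r]&\sl(4)\ar[d]_{\theta_3}\ar[r]&\dots\\
\sl(m_{1})\ar[r]&\sl(m_{2})\ar[r]&\sl(m_{3})\ar[r]&\dots\\
}
\end{equation*}
where the bottom row is the given exhaustion of $\ss$, the top row consists of the Gelfand--Tsetlin inclusions $\sl(n)\subset\sl(n+1)$ of signature $(1,0,1)$ from \refth{GTrule}, and each $\theta_i:\sl(n_i)\to\sl(m_i)$ is a diagonal injective homomorphism realised through a suitable $\sl(n_i)$-module $U_i$ of dimension $m_i$. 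The direct limit of the top row is $\sl(\infty)$, so such a diagram produces the desired injective homomorphism $\sl(\infty)\to\ss$.

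The heart of the argument is the choice of the modules $U_i$. Just as in \refeq{diag1} the module $\bigwedge^{\cdot}(F_n)$ works because it has dimension $2^n$ and restricts, along $\sl(n)\subset\sl(n+1)$, to two copies of $\bigwedge^{\cdot}(F_{n+1})$, here I would choose $U_i$ to be a direct sum of exterior powers of the natural module $F_{n_i}$, with multiplicities tuned so that $\dim U_i=m_i$ and so that $U_i$ restricted to $\sl(n_{i-1})$ (via the GT inclusion) is isomorphic to $U_{i-1}^{\oplus k_{i-1}}$, i.e. $k_{i-1}$ copies of $U_{i-1}$. Concretely, write $m_i=n_i\cdot d_i$ for suitable $d_i$ (here $d_i=m_i/n_i$ need not be an integer a priori, so one must be a little careful; the clean statement is that we pick $U_i$ with $\dim U_i=m_i$, which we \emph{can} do because by assumption $m_i$ is a value the underlying diagonal data allows). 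Using \refth{GTrule}, $F_{n}^{\omega_j}\downarrow\sl(n-1)\cong F_{n-1}^{\omega_j}\oplus F_{n-1}^{\omega_{j-1}}$ for the fundamental weights $\omega_j=\bigwedge^j F$, so a sum of exterior powers with binomial-type multiplicities restricts to a sum of exterior powers with the next binomial-type multiplicities, exactly doubling when all multiplicities are $1$. Replacing "doubling" by "multiplying by $k_{i-1}$" amounts to solving a triangular linear system for the multiplicity vectors, which is always solvable over $\QQ$ and — after passing to a subexhaustion of $\ss$ and using purity to absorb common factors — over $\ZZ_{\ge 0}$. This step is where purity and one-sidedness are genuinely used: purity guarantees the ratios $m_i/n_i$ do not collapse, so the multiplicity vectors stay bounded below, and one-sidedness ($r_i=0$) means we need only honest modules, not sums of modules and their duals.

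Once the modules $U_i$ are fixed with the restriction property $U_i\downarrow\sl(n_{i-1})\cong U_{i-1}^{\oplus k_{i-1}}$, the diagonal homomorphisms $\theta_i:\sl(n_i)\to\sl(m_i)=\sl(U_i)$ defined via $U_i$ automatically make the square commute up to the choices covered by \refle{BZhLemma}: indeed $\psi_{i-1}\circ\theta_{i-1}$ sends $\sl(n_{i-1})$ into $\sl(m_i)$ via the module $U_{i-1}^{\oplus k_{i-1}}$, while $\theta_i\circ\phi_{i-1}$ sends it in via $U_i\downarrow\sl(n_{i-1})$, and these two modules are isomorphic by construction, hence the two diagonal injective homomorphisms have the same signature. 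By the discussion following \refle{BZhLemma} we may then replace the $\theta_i$ by $\theta'_i$ making the diagram strictly commute, and we are done. The main obstacle I anticipate is the integrality of the multiplicity vectors: one must argue that, after discarding finitely many terms of the exhaustion of $\ss$ and replacing it by a subexhaustion (which does not change $\ss$), the triangular system for the exterior-power multiplicities has a nonnegative integer solution at every stage — this is exactly the place where the hypotheses "pure" and "one-sided" are invoked, and where a careful bookkeeping of Steinitz numbers $\mathrm{Stz}(\Cal S)$ versus powers of $2$ (the exterior-algebra dimension) is needed.
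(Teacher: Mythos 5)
You have correctly identified the framework the paper uses: exhaust $\sl(\infty)$ by the Gelfand--Tsetlin chain $\sl(n)\subset\sl(n+1)$ of signature $(1,0,1)$, realise each vertical map $\theta_k$ through an $\sl(k)$-module that is a direct sum of exterior powers $\bigwedge^i(F_k)$ with multiplicities $a_i^k$, and observe that commutativity of the diagram reduces to a triangular recursion on the multiplicity vectors (in the paper's notation, $a_i^k+a_{i+1}^k=n_k a_i^{k-1}$). This is exactly the paper's setup, and the appeal to \refle{BZhLemma} at the end to upgrade equality of signatures to a genuinely commutative diagram is also the paper's move.

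However, there is a genuine gap, and it is located precisely at the point you yourself flag as "the main obstacle I anticipate." The entire content of the proof is showing that the recursion admits a \emph{non-negative integer} solution for a suitable subexhaustion of $\ss$, and you do not do this; you merely observe that the system is solvable over $\QQ$ (which is trivial, since each row has one free parameter) and assert that purity, one-sidedness, and passing to a subexhaustion "absorb common factors." That last claim is not a proof, and your heuristic for \emph{why} it should work is not accurate: purity and one-sidedness are used only once, at the very start, to invoke \refth{BZh1} and identify $\ss\cong\sl(\Pi)$ for an infinite Steinitz number $\Pi$. After that, the only hypothesis in play is that $\Pi$ is infinite. In particular the claim that "purity guarantees the ratios $m_i/n_i$ do not collapse, so the multiplicity vectors stay bounded below" is not how the argument runs. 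The paper instead parametrises the triangle by a sequence $b_k=a_{k-1}^k/(n_1\cdots n_k)$, proves that non-negativity of all $a_i^k$ is equivalent to non-negativity of all iterated difference sequences $\{b_k^{(l)}\}$ together with $\sum_k b_k\le 1$, notes that geometric sequences $\{q^{-k}\}$ have this property, and then carries out a delicate analytic construction: it chooses the subexhaustion $n_k=m_{l_{k-1}+1}\cdots m_{l_k}$ to grow fast enough relative to a fixed base $q\ge 4$, and then builds $b_k$ as a convergent infinite combination $\sum_i c_i q^{-ik}$ whose coefficients $c_i$ are perturbed, via an explicit inverse-Vandermonde calculation, by error terms $\varepsilon_j$ chosen inductively so that each $b_k$ lands in $\frac{1}{n_1\cdots n_k}\ZZ_{\ge 0}$. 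None of this is suggested or supplied by your proposal, and it is where all the difficulty lies; without it you have a programme, not a proof.

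A smaller inaccuracy: the role of $2$ in the paper is confined to the motivating example $\sl(\infty)\hookrightarrow\sl(2^\infty)$. The general construction works with an arbitrary $q\ge 4$, so there is no "bookkeeping of $\mathrm{Stz}(\Cal S)$ versus powers of $2$" in the actual argument.
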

\begin{proof}
By \refth{BZh1} $\ss$ is isomorphic to $\sl(\Pi)$ for some
infinite Steinitz number $\Pi$. Then it is sufficient to show the
existence of a commutative diagram
\begin{equation}\label{diag2}
\xymatrix{
\sl(2)\ar[d]_{\theta_2}\ar[r]&\sl(3)\ar[d]_{\theta_3}\ar[r]&\dots\ar[r]&\sl(k)\ar[d]_{\theta_k}\ar[r]&\sl(k+1)\ar[d]_{\theta_{k+1}}\ar[r]&\dots\\
\sl(n_1 n_2)\ar[r]&\sl(n_1 n_2
n_3)\ar[r]&\dots\ar[r]&\sl(n_1\cdots n_k)\ar[r]&\sl(n_1\cdots
n_{k+1})\ar[r]&\dots }
\end{equation}
for suitable $\{n_i\}$, where $\theta_i$ are injective
homomorphisms and $n_1,n_2,\dots$ are chosen so that
$\displaystyle\prod_{i=1}^\infty n_i=\Pi$. Indeed, the diagram in
\refeq{diag2} yields an injective homomorphism
$\sl(\infty)\rightarrow\sl(n_1n_2\cdots)$, and $\sl(n_1n_2\cdots)$
is isomorphic to $\ss$ by \refth{BZh1}. We will choose the
homomorphisms $\theta_k$ so that
$$V_k\downarrow\sl(k)\cong a_0^k\bigwedge^0(F_k)\oplus
a_1^k\bigwedge^1(F_k)\oplus\cdots\oplus a_k^k\bigwedge^k(F_k)$$ as
$\sl(k)$-modules. Here $V_k$ stands for the natural $\sl(n_1\cdots
n_k)$-module, $F_k$ is the natural $\sl(k)$-module and the
coefficients $a_i^k$, $i=0,\dots,k$ are non-negative integers. The
above injective homomorphism of $\sl(\infty)$ into $\sl(2^\infty)$
corresponds to the particular case $n_k=2$ and $a_i^k=1$ for all
$k\geq 2,\,i=0,\dots,k$.

We see that if the numbers $\{a_i^k\}$ satisfy the conditions
$a_i^k+a_{i+1}^k=n_k a_i^{k-1},\,k\geq 3,\,i=0,\dots,k-1$ and
$a_0^2+2a_1^2+a_2^2=n_1 n_2$, then the homomorphisms $\theta_k$
can be chosen so that the diagram in \refeq{diag2} commutes.

We will add numbers $a_0^1,a_1^1,a_0^0$ to the set of coefficients
$\{a_i^k\}$ and will require $a_0^2+a_1^2=n_2a_0^1$,
$a_1^2+a_2^2=n_2a_1^1$, $a_0^1+a_1^1=n_1$, and $a_0^0=1$. Then the
numbers $\{a_i^k\}$ will form an infinite triangle
$$
\begin{array}{ll}
\;\;\;\;\;\;\;a_0^0\\
\;\;\;\;\;a_0^1\;a_1^1\\
\;\;\;a_0^2\;a_1^2\;a_2^2\\
\;\;\;\;\;\;\;\dots\\
\end{array}
$$
such that \begin{equation}\label{cond1} a_i^k+a_{i+1}^k=n_k
a_i^{k-1},\,k\geq 1\text{ and }a_0^0=1.\end{equation}

It is enough to prove that a triangle of non-negative integers
satisfying \refeq{cond1} exists for a suitable choice of $n_i$.
Set $b_k:=\frac{a_{k-1}^k}{n_1\cdots n_k}$ for $k\geq 1$. A simple
calculation shows that $a_k^k=n_1\cdots
n_k(a_0^0-b_1-b_2-\cdots-b_k)$. Notice that since $a_0^0=1$, the
numbers $b_1,b_2,\dots$ uniquely determine the entire triangle, as
the $l^{\text{th}}$ ``diagonal'' $\{a_k^{k+l}\}_{k\geq 0}$ of the
triangle is determined by the previous diagonal
$\{a_k^{k+l-1}\}_{k\geq 0}$ and the sequence $n_1,n_2,\dots$.

Now we will find conditions on $b_k$ under which all $a_i^k$ will
be non-negative. Since $a_k^{k+1}\geq 0$, the numbers $b_k$ should
be non-negative. In order for $a_k^k$ to be non-negative we should
have $\displaystyle\sum_{i=1}^k b_i<a_0^0$ for all $k$ (since
$b_i$ are non-negative, we can rewrite these conditions as
$\displaystyle\sum_{i=1}^\infty b_i\leq 1$). The entries of the
diagonal $\{a_k^{k+2}\}_{k\geq 0}$ can be found from
\refeq{cond1}: $a_k^{k+2}=n_1\cdots n_{k+2}(b_{k+1}-b_{k+2})$ for
$k\geq 0$. This requires the sequence $\{b_k-b_{k+1}\}$ to be
non-negative. If we set $b_k^{(1)}:=b_k-b_{k+1}$ for $k\geq 1$,
then in a similar way we obtain $a_k^{k+3}=n_1\cdots
n_{k+3}(b_{k+1}^{(1)}-b_{k+2}^{(1)})$. This requires the sequence
$\{b_k^{(2)}:=b_k^{(1)}-b_{k+1}^{(1)}\}$ to be non-negative.
Continuing this procedure, we get $a_k^{k+l}=n_1\cdots
n_{k+l}b_{k+1}^{(l-1)}$ for all $l\geq 3$, where by definition
$b_k^{(l+1)}=b_k^{(l)}-b_{k+1}^{(l)}$. Now we see that the
non-negative integers $a_i^k$ satisfying \refeq{cond1} exist if
there exists a non-negative sequence $\{b_k\}_{k\geq 1}$ with
$b_k\in\frac{1}{n_1\cdots n_k}\ZZ_{\geq 0}$ and
$\displaystyle\sum_{k=1}^\infty b_k\leq 1$ such that
\begin{equation}\label{cond2} \text{all iterated sequences of
differences }\{b_k^{(l)}\}_{k\geq 1}\text{ are non-negative.}
\end{equation}

Note that the sequence $\{b_k=\frac{1}{q^k}\},\,q>1$ satisfies
\refeq{cond2} as $b_k^{(l)}=\frac{1}{q^k}(1-\frac{1}{q})^l>0$ for
all $k,l\geq 1$. (In the case $n_k=n$ for all $k$, taking $q=n$
yields an injective homomorphism
$\sl(\infty)\hookrightarrow\sl(n^\infty)$.) We will find the
desired sequence $\{b_k\}$ as a convergent infinite linear
combination of geometric sequences.

Let us put $q=4$ (the following construction would work for any
$q\geq 4$) and let $\Pi=m_1 m_2\cdots$. Choose a strictly
increasing sequence of integers $\{l_k\}_{k\geq 0}$ so that
$l_0=0$ and $m_1 m_2\cdots m_{l_k}>\frac{(q-1)q^{k^2+1}}{q-2}$ for
$k\geq 1$, which is possible as $\Pi$ is infinite. Take
$n_k=m_{l_{k-1}+1}\cdots m_{l_k}$ for $k\geq 1$. Then clearly $n_1
n_2\cdots=\Pi$.

Let us now construct the sequence $\{b_k\}$ for the chosen
$n_1,n_2,\dots$. For $i\geq 1$ we denote $\displaystyle c_i=1+
\sum_{j=i}^\infty\frac{\varepsilon_j}{\frac{1}{q^i}(\frac{1}{q^i}-\frac{1}{q})\cdots(\frac{1}{q^i}-\frac{1}{q^{i-1}})(\frac{1}{q^i}-\frac{1}{q^{i+1}})\cdots(\frac{1}{q^i}-\frac{1}{q^j})}$,
where the numbers $\varepsilon_j$, satisfying
\begin{equation}\label{cond3}
0\leq\varepsilon_j<\frac{q-2}{(q-1)q^{j^2+1}}, \end{equation} are
to be chosen later, and put $\displaystyle b_k=\sum_{i=1}^\infty
c_i \left(\frac{1}{q^i}\right)^k$. We will show that for the
numbers $\varepsilon_j$, satisfying \refeq{cond3}, the series for
$c_i$ converges to a positive number for $i\geq 1$, the series for
$b_k$ converges for $k\geq 1$, and $\displaystyle\sum_{k=1}^\infty
b_k\leq 1$. Moreover, we will show that by varying $\varepsilon_j$
inside corresponding intervals we can make each $b_k$ to be of the
form $\frac{1}{n_1\cdots n_k}\ZZ_{\geq 0}$. We will have then
$\displaystyle b_k^{(l)}=\sum_{i=1}^\infty c_i
\left(\frac{1}{q^i}\right)^k\left(1-\frac{1}{q^i}\right)^l\geq 0$,
so $\{b_k^{(l)}\}$ will be a sequence of non-negative numbers for
any $l$. Hence the final condition in \refeq{cond2} will be
satisfied.

As a matter of convenience we denote $q_i=\frac{1}{q^i}$. Then let
$c_{ij}=\frac{\varepsilon_j}{q_i(q_i-q_1)\cdots(q_i-q_{i-1})(q_i-q_{i+1})\cdots(q_i-q_j)}$
for $i\leq j$. We see that $\displaystyle c_i=1+\sum_{j=i}^\infty
c_{ij}$. Let us prove that this series converges absolutely. We
have \begin{align*}
|c_i-1|&=\left|\sum_{j=i}^\infty\frac{\varepsilon_j}{(\frac{1}{q^i})^j(1-q^{i-1})\cdots(1-q)(1-\frac{1}{q})\cdots(1-\frac{1}{q^{j-i}})}\right|\\
&\leq\sum_{j=i}^\infty\frac{\varepsilon_j}{(\frac{1}{q^i})^j(q^{i-1}-1)\cdots(q-1)(1-\frac{1}{q})\cdots(1-\frac{1}{q^{j-i}})}\\
&\leq \sum_{j=i}^\infty\frac{\varepsilon_j
q^{ij}}{(1-\frac{1}{q})(1-\frac{1}{q^2})\cdots}
\leq\sum_{j=i}^\infty\frac{\varepsilon_j
q^{ij}}{(1-\frac{1}{q}-\frac{1}{q^2}-\cdots)}=
\sum_{j=i}^\infty\frac{\varepsilon_j
q^{ij}(q-1)}{q-2}.\end{align*} Then, using \refeq{cond3}, we
obtain $\displaystyle
|c_i-1|\leq\sum_{j=i}^\infty\frac{q^{ij}}{q^{j^2+1}}=
\frac{1}{q}+\frac{1}{q^{i+2}}+\frac{1}{q^{2i+5}}+\cdots<
\frac{1}{q}+\frac{1}{q^2}+\cdots=\frac{1}{q-1}$. Thus, the series
$\displaystyle 1+\sum_{j=i}^\infty c_{ij}$ converges absolutely
and its sum $c_i$ is a number from the interval
$\left(\frac{q-2}{q-1},\frac{q}{q-1}\right)$ (in particular, $c_i$
is positive) for all $i$. Furthermore, \begin{align*}
\sum_{k=1}^\infty b_k&=\sum_{i=1}^\infty
\frac{c_i}{q^i}+\sum_{i=1}^\infty \frac{c_i}{(q^2)^i}+\cdots <
\frac{q}{q-1}\left(\sum_{i=1}^\infty\frac{1}{q^i}+\sum_{i=1}^\infty\frac{1}{(q^2)^i}+\cdots\right)\\
&=
\frac{q}{q-1}\left(\frac{1}{q-1}+\frac{1}{q^2-1}+\frac{1}{q^3-1}+\cdots\right)\\
&<
\frac{q}{q-1}\left(\frac{1}{q-1}+\frac{1}{(q-1)^2}+\cdots\right)=
\frac{q}{q-1}\cdot\frac{1}{q-2}<1\textrm{ because }q\geq
4.\end{align*} Since every term in these expressions is
non-negative, the convergence of each series $\displaystyle
b_k=\sum_{i=1}^\infty c_i \left(\frac{1}{q^i}\right)^k$ follows.

Finally, let us show that the numbers $\varepsilon_j$, satisfying
\refeq{cond3}, can be chosen so that $b_k\in\frac{1}{n_1\cdots
n_k}\ZZ_{\geq 0}$. We know that $\displaystyle
b_k=\sum_{i=1}^\infty c_i q_i^k=\sum_{i=1}^\infty
q_i^k+\sum_{i=1}^\infty\sum_{j=i}^\infty c_{ij} q_i^k$. From what
we proved it follows that the latter sum is absolutely convergent.
Therefore we can rewrite it as $\displaystyle
b_k=\sum_{i=1}^\infty q_i^k+\sum_{j=1}^\infty\sum_{i=1}^j
c_{ij}q_i^k$. Note that the numbers $c_{ij}$ were defined as
solutions of the equation
$\left(\begin{array}{ll}q_1\;\;\;\;\;\dots\;
q_j\\\;\vdots\;\;\;\;\;\;\ddots\;\;\vdots\\q_1^{j-1}\;\dots\;
q_j^{j-1}\\q_1^j\;\;\;\;\,\dots\; q_j^j\\\end{array}\right)
\left(\begin{array}{ll}c_{1j}\\\;\vdots\\c_{jj}\\\end{array}\right)=
\left(\begin{array}{ll}0\\\vdots\\0\\\varepsilon_j\\\end{array}\right)$
using the well-known formula for inverting a Vandermonde matrix.
Thus, $\displaystyle\sum_{i=1}^j q_i^k c_{ij}=0$ for $k<j$ and
$\displaystyle\sum_{i=1}^j q_i^j c_{ij}=\varepsilon_j$. Hence,
$\displaystyle b_k=\sum_{i=1}^\infty
q_i^k+\sum_{j=1}^{k-1}\sum_{i=1}^j c_{ij}q_i^k+\varepsilon_k$, so
$b_k-\varepsilon_k$ depends only on
$\varepsilon_1,\dots,\varepsilon_{k-1}$. Let us introduce the
notation $\displaystyle
f_k(\varepsilon_1,\dots,\varepsilon_{k-1})=\sum_{i=1}^\infty
q_i^k+\sum_{j=1}^{k-1}\sum_{i=1}^j c_{ij}q_i^k$ for $k\geq 2$ and
$\displaystyle f_1=\sum_{i=1}^\infty
q_i=\sum_{i=1}^\infty\frac{1}{q^i}=\frac{1}{q-1}$.

Now we define inductively the numbers $\varepsilon_k$. We choose
$\varepsilon_1$ in such a way that $b_1$ is the smallest number of
the form $\frac{1}{n_1}\ZZ_{\geq 0}$ which is not less than $f_1$.
Then we have
$0\leq\varepsilon_1=b_1-f_1<\frac{1}{n_1}<\frac{q-2}{(q-1)q^2}$
(because of the choice of $n_1$), so $\varepsilon_1$ lies inside
the corresponding interval in \refeq{cond3}. For fixed
$\varepsilon_1,\dots,\varepsilon_{k-1}$ we choose $\varepsilon_k$
to make $b_k$ the smallest number of the form $\frac{1}{n_1\cdots
n_k}\ZZ_{\geq 0}$ which is not less than
$f_k(\varepsilon_1,\dots,\varepsilon_{k-1})$. Then
$0\leq\varepsilon_k=b_k-f_k(\varepsilon_1,\dots,\varepsilon_{k-1})<\frac{1}{n_1\cdots
n_k}<\frac{q-2}{(q-1)q^{k^2+1}}$ (again, because of the choice of
$n_1,\dots,n_k$), so $\varepsilon_k$ satisfies \refeq{cond3}.
Therefore the sequence $\{b_k\}$ satisfies all the required
conditions, and the statement follows.
\end{proof}

\textbf{Remark.} Since $\so(\infty)$ and $\sp(\infty)$ are
subalgebras of $\sl(\infty)$, each of them admits also an
injective homomorphism into any one-sided pure diagonal Lie
algebra of type $A$.

The following two lemmas show that certain conditions guarantee
the existence of injective homomorphisms of non-finitary diagonal
Lie algebras.

\begin{lemma}\label{lemma1}
Let $\ss_1=X(\Cal T_1)$ and $\ss_2=X(\Cal T_2)$ be diagonal Lie
algebras of the same type ($X=A$, $C$, or $O$), neither of them
finitary. Set $S_i=\mathrm{Stz}(\Cal S_i)$,
$S=\mathrm{GCD}(S_1,S_2)$, $R_i=\quotst(S_i,S)$,
$\delta_i=\delta(\Cal T_i)$, $C_i=\mathrm{Stz}(\Cal C_i)$,
$C=\mathrm{GCD}(C_1,C_2)$, $B_i=\quotst(C_i,C)$, and
$\sigma_i=\sigma(\Cal T_i)$ for $i=1,2$. We assume that $R_1$ is
finite.
\begin{itemize}
\item[(i)] Assume that $\ss_1$ and $\ss_2$ are non-sparse of type
$A$, both $R_1$ and $R_2$ are finite, and $S$ is not divisible by
an infinite power of any prime number. If
$2\frac{R_1}{\delta_1}<\frac{R_2}{\delta_2}$, then $\ss_1$ admits
an injective homomorphism into $\ss_2$. If
$2\frac{R_1}{\delta_1}=\frac{R_2}{\delta_2}$, $\ss_1$ admits an
injective homomorphism into $\ss_2$ unless $\ss_1$ is pure and
$\ss_2$ is dense.

\item[(ii)] Assume that $\ss_1$ and $\ss_2$ are non-sparse, both
$R_1$ and $R_2$ are finite, and $S$ is not divisible by an
infinite power of any prime number. In addition, assume that one
of the following is true:
\begin{itemize}
\item both $\ss_1$ and $\ss_2$ are one-sided; \item $B_1$ is
finite, either $\ss_1$ is one-sided and $\ss_2$ is two-sided
non-symmetric or $\ss_2$ is two-sided weakly non-symmetric and
$\ss_1$ is two-sided non-symmetric; \item $B_1$ is finite, both
$\ss_1$ and $\ss_2$ are two-sided strongly non-symmetric, either
$B_2$ is infinite or $C$ is divisible by an infinite power of some
prime number; \item both $B_1$ and $B_2$ are finite, both $\ss_1$
and $\ss_2$ are two-sided strongly non-symmetric, $C$ is not
divisible by an infinite power of a prime number, and
$\frac{R_1\sigma_1}{B_1}\geq\frac{R_2\sigma_2}{B_2}$.
\end{itemize} Then, if
$\frac{R_1}{\delta_1}<\frac{R_2}{\delta_2}$, $\ss_1$ admits an
injective homomorphism into $\ss_2$. If
$\frac{R_1}{\delta_1}=\frac{R_2}{\delta_2}$, $\ss_1$ admits an
injective homomorphism into $\ss_2$ unless $\ss_1$ is pure and
$\ss_2$ is dense.

\item[(iii)] Assume that $\ss_1$ and $\ss_2$ are non-sparse. If
$R_2$ is infinite or $S$ is divisible by an infinite power of some
prime number, then $\ss_1$ admits an injective homomorphism into
$\ss_2$.

\item[(iv)] If $\ss_2$ is sparse, then $\ss_1$ admits an injective
homomorphism into $\ss_2$.
\end{itemize}
\end{lemma}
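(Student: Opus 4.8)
Fix normalized exhaustions $\ss_k=\bigcup_i\ss_{k,i}$ ($k=1,2$) by simple subalgebras with all inclusions diagonal; write $(l_i,r_i,z_i)$, $n_i$, $s_i=l_i+r_i$, $c_i=l_i-r_i$ for the data of $\Cal T_1$ and the primed analogues for $\Cal T_2$, and let $\delta^{(1)}_i,\delta^{(2)}_j$ and $\sigma^{(1)}_i,\sigma^{(2)}_j$ be the density and symmetry sequences, so that $s_1\cdots s_{i-1}=\delta^{(1)}_in_i$, $s'_1\cdots s'_{j-1}=\delta^{(2)}_jn'_j$, with $\delta^{(1)}_i\downarrow\delta_1$, $\delta^{(2)}_j\downarrow\delta_2$. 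The plan is to construct, for a fast enough increasing sequence $j_1<j_2<\cdots$, a commutative diagram as in \refeq{diag0} with vertical maps $\theta_i\colon\ss_{1,i}\to\ss_{2,j_i}$ diagonal injective of signature $(p_i,q_i,u_i)$. By \refle{BZhLemma} and the discussion after it, it suffices that for each $i$ the two composites $\ss_{1,i}\to\ss_{2,j_{i+1}}$ have the same signature; multiplying out signatures of composed diagonal maps, this is the pair of identities
\[\pi_i\,s'_{j_i}\cdots s'_{j_{i+1}-1}=s_i\,\pi_{i+1},\qquad \gamma_i\,c'_{j_i}\cdots c'_{j_{i+1}-1}=c_i\,\gamma_{i+1},\]
with $\pi_i=p_i+q_i$, $\gamma_i=p_i-q_i$; so the diagram is pinned down by $\{j_i\}$, $\pi_1\ge1$ and $\gamma_1$. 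Legality of all $\theta_i$ says: (A) $\pi_i=\pi_1(s'_{j_1}\cdots s'_{j_i-1})/(s_1\cdots s_{i-1})\in\ZZ_{\ge1}$; (B) for $X=A$, $\gamma_i\in\ZZ$, $\gamma_i\equiv\pi_i\pmod2$, $|\gamma_i|\le\pi_i$ (for $X=O,C$ one has $q_i=0$, $\gamma_i=\pi_i$, and (B) is automatic; such algebras are one-sided in the present terminology, covered by (ii)); (C) $u_i:=n'_{j_i}-\pi_in_i\ge0$.

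Two reductions. First, by the density identities (C) is equivalent to $\pi_1\delta^{(2)}_{j_i}\le N\delta^{(1)}_i$ for all $i$, where $N:=s'_1\cdots s'_{j_1-1}$ can be made as large as we please ($\ss_2$ is not finitary), so only the ratio $\pi_1/N$ matters. Second, telescoping (A) gives $s_1\cdots s_{i-1}\mid\pi_1\,s'_{j_1}\cdots s'_{j_i-1}$; since $R_1=\quotst(S_1,S)$ is finite, $S\mid S_2$, and $S$ has no infinite prime power in the relevant cases, taking $j_1$ so large that $\prod_{k<j_1}s'_k$ already contains $R_2$ and then taking each $j_i$ large enough (how large depends only on the finitely many primes dividing $s_1\cdots s_{i-1}$) makes the least admissible $\pi_1$ equal to $NR_1/R_2$, hence $\pi_1/N=R_1/R_2$. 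Cases (iv) and (iii) are now the slack cases: if $\ss_2$ is sparse then $\delta^{(2)}_{j_i}\to0$, so (C) holds for any fixed $\pi_1$ once the $j_i$ are large; and if $\ss_2$ is non-sparse while $R_2$ is infinite or $S$ carries an infinite prime power, then both the divisibility in (A) and the inequality (C) have room to spare. In both one takes $\gamma_i\equiv0$, $\pi_1$ even, so (B) holds, and lets $\{j_i\}$ grow quickly enough.

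Case (i) ($X=A$, non-sparse, $R_1,R_2$ finite, $S$ without infinite prime powers): take $\gamma_i\equiv0$; then (B) forces every $\pi_i$ to be even, which by the telescoping analysis at the prime $2$ makes the least admissible $\pi_1$ equal to $2NR_1/R_2$. By (C) we then need $2R_1/R_2\le\delta^{(1)}_i/\delta^{(2)}_{j_i}$ for all $i$, which — using $\delta^{(1)}_i\ge\delta_1$ and that the $j_i$ may be chosen with $\delta^{(2)}_{j_i}$ arbitrarily close to $\delta_2$ — is achievable when $2R_1/\delta_1<R_2/\delta_2$; at equality it degenerates to $\delta^{(2)}_{j_i}/\delta_2\le\delta^{(1)}_i/\delta_1$, which a subsequence can satisfy unless the left side is forced above $1$ ($\ss_2$ dense) while the right side is eventually $1$ ($\ss_1$ pure) — the stated exception. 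Case (ii): when $\gamma_i$ is carried along genuinely rather than killed, the factor $2$ disappears and the balance relaxes to $R_1/\delta_1\le R_2/\delta_2$ (equality as in (i)). One takes $\gamma_1=\pi_1$ when $\ss_1$ is one-sided (so $\gamma_i=\pi_i$, and $|\gamma_i|\le\pi_i$ automatically since $\prod c'_k/\prod s'_k$ is nonincreasing); otherwise one runs the $\gamma$-recursion beside the $\pi$-recursion, and then integrality of $\gamma_i$ becomes divisibility of $c_1\cdots c_{i-1}$ into $\gamma_1\,c'_{j_1}\cdots c'_{j_i-1}$, controlled by $C=\mathrm{GCD}(C_1,C_2)$ and $B_1=\quotst(C_1,C)$, $B_2=\quotst(C_2,C)$ exactly as (A) was controlled by $S,R_1,R_2$, while $|\gamma_i|\le\pi_i$ becomes a comparison of partial products of the $c$'s and $s$'s controlled by $\sigma_1,\sigma_2$ (the parity $\gamma_i\equiv\pi_i\pmod2$ follows from $s_k\equiv c_k\pmod2$ and $s'_k\equiv c'_k\pmod2$). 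The hypotheses of (ii) — finiteness of $B_1$, together with, in each listed configuration, enough slack or balance on the $\Cal C$-side (one algebra one-sided or weakly non-symmetric, or $B_2$ infinite / $C$ with an infinite prime power, or the balance $R_1\sigma_1/B_1\ge R_2\sigma_2/B_2$) — are exactly those under which $\gamma_1$ can be chosen so that (A), (B), (C) all hold; in the tightest case (both two-sided strongly non-symmetric, $B_1,B_2$ finite, $C$ without infinite prime powers) the requirement $|\gamma_i|\le\pi_i$, with $\pi_1,\gamma_1$ at their forced minima and $j_1$ large, amounts in the limit precisely to $R_2\sigma_2/B_2\le R_1\sigma_1/B_1$.

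The main obstacle is this last subcase: one must meet the $\Cal S$-balance $R_1/\delta_1\le R_2/\delta_2$ and the $\Cal C$-balance $R_1\sigma_1/B_1\ge R_2\sigma_2/B_2$ with a single choice of $j_1$, $\pi_1$, $\gamma_1$, and then verify that both families of partial-product inequalities — $\delta^{(2)}_{j_i}$ versus $\delta^{(1)}_i$ for (C), and the $\sigma$-partial products for $|\gamma_i|\le\pi_i$ — can be made to hold for every $i$ simultaneously by choosing the $j_i$ along suitable subsequences, the equality cases again requiring the pure/dense distinction. The remaining configurations in (i)--(iv) are then routine variations of the same divisibility-and-density bookkeeping, resting on \refle{BZhLemma}.
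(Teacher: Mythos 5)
Your sketch follows the same route as the paper's proof: build a commutative ladder of diagonal maps, invoke Lemma \ref{BZhLemma} (and the remark following it) so that constructing the ladder reduces to matching signatures, translate the resulting conditions into the divisibility/telescoping constraint on $\pi_i$, the parity-and-magnitude constraint on $\gamma_i$, and the dimension inequality, and then recognize the dimension inequality as the comparison of $R_1/\delta_1$ and $R_2/\delta_2$, with a factor of $2$ appearing when $\gamma_i\equiv 0$ is forced. The treatment of cases (iii), (iv) as ``slack'' cases, and the pure/dense equality analysis, likewise match the paper. Your account is a sketch and elides some bookkeeping the paper carries out explicitly — in particular, the paper forces $n_0$ and $m_0$ to absorb all of $R_1 2^u$ and $R_2 2^u$ so that all $s_i, s'_i, c_i, c'_i$ are odd and $(q_i\pm t_i)/2$ are integers; it adjusts the exhaustion of $\ss_1$ (and, in the third subcase of (ii), of $\ss_2$) to force $c_0c_1\cdots \mid c'_0c'_1\cdots$; and in the tightest subcase it uses that $(\sigma_1)_i$ strictly decreases without stabilizing to get the strict inequality needed for $y_i\ge 0$ — but these are the same mechanism you gesture at rather than a different argument.
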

\begin{proof} The Steinitz numbers $S_1$, $C_1$ and the indices $\delta_1$,
$\sigma_1$ are in general not well-defined for a Lie algebra
$\ss_1$: these values characterize a given exhaustion of $\ss_1$.
However, if $\ss_1$ is non-sparse and $S_1$ is not divisible by an
infinite power of any prime number, then the number
$\frac{R_1}{\delta_1}$ does not depend on the exhaustion of
$\ss_1$ (because then by condition $\Cal A_2$ of \refth{BZh1}
$\frac{\mathrm{Stz}(\Cal S_1)}{\mathrm{Stz}(\Cal S'_1)}$ is a set
containing exactly one element for $S'_1$ corresponding to any
other exhaustion of $\ss_1$, and therefore $\frac{R_1}{\delta_1}$
is well-defined by condition $\Cal A_3$). Also, under the
assumptions made in the last statement of (ii) the number
$\frac{\sigma_1 R_1}{B_1}$ does not depend on the exhaustion of
$\ss_1$ (this follows from condition $\Cal B_3$ of \refth{BZh1}).
The finiteness of $R_1$, $R_2$, $B_1$, $B_2$ does not depend on
the exhaustion either, so in the proofs of all the statements we
can exhaust $\ss_1$ in any convenient way. The same applies to
$\ss_2$.

We will assume that $X=A$ and prove all four statements for type
$A$ Lie algebras. If $\ss_1$ and $\ss_2$ are of type $O$ or $C$,
then both $\ss_1$ and $\ss_2$ are one-sided and the proof is
analogous to the proof in the type $A$ case when $\ss_1$ and
$\ss_2$ are one-sided.

Let us now set up the notations for the proof of all four
statements. Let $\ss_1$ be exhausted as
$\sl(n_0)\subset\sl(n_1)\subset\cdots$, each inclusion
$\sl(n_i)\rightarrow\sl(n_{i+1})$ being of signature
$(l_i,r_i,z_i)$, $i\geq 0$. By possibly changing some first terms
of the exhaustion, we can choose $n_0$ to be divisible by $R_1$.
Similarly, let $\sl(m_0)\subset\sl(m_1)\subset\cdots$ be the
exhaustion of $\ss_2$, each inclusion
$\sl(m_i)\rightarrow\sl(m_{i+1})$ being of signature
$(l'_i,r'_i,z'_i)$, $i\geq 0$. Set $s_i=l_i+r_i$, $c_i=l_i-r_i$,
$s'_i=l'_i+r'_i$, and $c'_i=l'_i-r'_i$ for $i\geq 0$. Then
$S_1=n_0 s_0 s_1\cdots$, $C_1=n_0 c_0 c_1\cdots$, $S_2=m_0 s'_0
s'_1\cdots$, $C_2=m_0 c'_0 c'_1\cdots$,
$\displaystyle\delta_1=\lim_{i\rightarrow\infty}\frac{n_0s_0\cdots
s_{i-1}}{n_i}$,
$\displaystyle\delta_2=\lim_{i\rightarrow\infty}\frac{m_0s'_0\cdots
s'_{i-1}}{m_i}$,
$\displaystyle\sigma_1=\lim_{i\rightarrow\infty}\frac{c_0\cdots
c_i}{s_0\cdots s_i}$, and
$\displaystyle\sigma_2=\lim_{i\rightarrow\infty}\frac{c'_0\cdots
c'_i}{s'_0\cdots s'_i}$.

Consider a diagram
\begin{equation}\label{diag10}
\xymatrix{
\sl(n_0)\ar[d]_{\theta_0}\ar[r]&\sl(n_1)\ar[d]_{\theta_1}\ar[r]&\dots\ar[r]&\sl(n_i)\ar[d]_{\theta_i}\ar[r]&\sl(n_{i+1})\ar[d]_{\theta_{i+1}}\ar[r]&\dots\\
\sl(m_{k_0})\ar[r]&\sl(m_{k_1})\ar[r]&\dots\ar[r]&\sl(m_{k_i})\ar[r]&\sl(m_{k_{i+1}})\ar[r]&\dots,}
\end{equation}
where $\theta_i$ is a diagonal homomorphism of signature
$(x_i,y_i,m_{k_i}-(x_i+y_i)n_i)$, $i\geq 0$. Taking into
consideration our remark at the end of section 2, we see that to
make such a diagram well-defined and commutative it is enough to
have
\begin{equation}\label{cond4}
s_i(x_{i+1}+y_{i+1})=(x_i+y_i)s'_{k_i}\cdots
s'_{k_{i+1}-1},\end{equation}\begin{equation}\label{cond5}
c_i(x_{i+1}-y_{i+1})=(x_i-y_i)c'_{k_i}\cdots
c'_{k_{i+1}-1},\end{equation} and
\begin{equation}\label{cond6}m_{k_i}\geq(x_i+y_i)n_i\end{equation} for $i\geq 0$. Finally, we set $p_0=\frac{n_0}{R_1}$
and $p_i=p_0 s_0\cdots s_{i-1}$ for $i\geq 1$. We are now ready to
prove that there exist numbers $x_i,y_i,\,i\geq 0$ satisfying
\refeq{cond4} $-$ \refeq{cond6} in all four cases.

(i) The Steinitz number $R_2$ is finite in this case. Possibly by
changing the exhaustion of $\ss_2$ we can choose $m_0$ to be
divisible by $R_2$. Choose also each $k_i$ large enough so that
$m_0 s'_0\cdots s'_{k_i-1}$ is divisible by $R_2p_i$ (this is
possible since $p_i$ divides $S$) and put $q_i=\frac{m_0
s'_0\cdots s'_{k_i-1}}{R_2 p_i}$ for $i\geq 0$. Put $x_i=y_i=q_i$.
Then it is easy to verify that \refeq{cond4} and \refeq{cond5}
hold, and \refeq{cond6} is equivalent to $\frac{m_0 s'_0\cdots
s'_{k_i-1}}{R_2 m_{k_i}}\leq\frac{n_0 s_0\cdots s_{i-1}}{2R_1
n_i}$.

Suppose that $\frac{\delta_2}{R_2}<\frac{\delta_1}{2R_1}$. Pick
$\alpha\in(\frac{\delta_2}{R_2},\frac{\delta_1}{2R_1})$. Since
$\displaystyle\delta_1=\lim_{i\rightarrow\infty}\frac{n_0
s_0\cdots s_{i-1}}{n_i}$ and
$\displaystyle\delta_2=\lim_{i\rightarrow\infty}\frac{m_0
s'_0\cdots s'_i}{m_i}$ we have $\frac{m_0 s'_0\cdots
s'_{k_i-1}}{R_2 m_{k_i}}\leq\alpha\leq\frac{n_0 s_0\cdots
s_{i-1}}{2R_1 n_i}$ for $i\geq i_0$, $k_i\geq j_0$. Obviously we
can choose each $k_i$ greater than $j_0$. Also we can construct
$\theta_i$ only for $i\geq i_0$ and the diagram in \refeq{diag10}
will still give us an injective homomorphism of $\ss_1$ into
$\ss_2$.

Let now $\frac{\delta_2}{R_2}=\frac{\delta_1}{2R_1}$. If $\ss_2$
is pure then $\frac{m_0 s'_0\cdots s'_{k_i-1}}{R_2
m_{k_i}}=\frac{\delta_2}{R_2}=\frac{\delta_1}{2R_1}\leq\frac{n_0
s_0\cdots s_{i-1}}{2R_1 n_i}$, where the latter inequality holds
because the sequence $\frac{n_0 s_0\cdots s_{i-1}}{n_i}$ is
decreasing. Finally, if both $\ss_1$ and $\ss_2$ are dense, then
for each $i$ we have
$\frac{\delta_2}{R_2}=\frac{\delta_1}{2R_1}<\frac{n_0 s_0\cdots
s_{i-1}}{2R_1 n_i}$, so to make $\frac{m_0 s'_0\cdots
s'_{k_i-1}}{R_2 m_{k_i}}\leq\frac{n_0 s_0\cdots s_{i-1}}{2R_1
n_i}$ we choose $k_i$ sufficiently large.

(ii) Possibly by changing the exhaustions of $\ss_1$ and $\ss_2$
we choose $n_0$ to be divisible by $R_1 2^u$ and $m_0$ to be
divisible by $R_2 2^u$, where $u$ is the maximal power of 2
dividing $S$ ($u$ is finite because $2^\infty$ does not divide
$S$). We also choose $m_0$ large enough so that
$\frac{m_0}{R_2}\geq\frac{n_0}{R_1}$. Denote again $q_i=\frac{m_0
s'_0\cdots s'_{k_i-1}}{R_2 p_i}$, $i\geq 0$ ($k_i$ is chosen large
enough to make $R_2 p_i$ divide $m_0 s'_0\cdots s'_{k_i-1}$).

If both $\ss_1$ and $\ss_2$ are one-sided, we put $x_i=q_i,
y_i=0$. In the other three cases $B_1$ is finite, so $c_0
c_1\cdots$ divides $M c'_0 c'_1\cdots$ for some finite $M$. By
changing the exhaustion of $\ss_1$ we can make $c_0 c_1\cdots$
divide $c'_0 c'_1\cdots$. For that we replace the signature
$(l_i,r_i,z_i)$ with $((l_i+r_i+1)/2,(l_i+r_i-1)/2,z_i)$ for
finitely many $i$ ($l_i+r_i$ is odd for all $i\geq 0$ because $s_0
s_1\cdots=\frac{R_1S}{n_0}$ is not divisible by 2). Now we can
choose each $k_i$ large enough so that $c_0\cdots c_{i-1}$ divides
$c'_0\cdots c'_{k_i-1}$. Then denote $t_i=\frac{c'_0\cdots
c'_{k_i-1}}{c_0\cdots c_{i-1}}$ for $i\geq 1$ and $t_0=1$. Notice
that for each $i\geq 0$ the numbers $c_i$ and $c'_i$ have the same
parities as the numbers $s_i$ and $s'_i$ respectively. But all
$s_i$ and $s'_i$ are odd, so $c_i$ and $c'_i$ are odd as well.
Hence $t_i$ and $q_i$ are odd, and we put $x_i=(q_i+t_i)/2$ and
$y_i=(q_i-t_i)/2$. Let us check that $y_i\geq 0$ (or $q_i\geq
t_i$). This is obvious for $i=0$. For $i\geq 1$ the inequality
$y_i\geq 0$ is equivalent to $\frac{R_2}{m_0}\cdot\frac{c'_0\cdots
c'_{k_i-1}}{s'_0\cdots
s'_{k_i-1}}\leq\frac{R_1}{n_0}\cdot\frac{c_0\cdots
c_{i-1}}{s_0\cdots s_{i-1}}$, or
\begin{equation}\label{eq8}
\frac{R_2}{m_0}(\sigma_2)_{k_i}\leq\frac{R_1}{n_0}(\sigma_1)_i,
\end{equation} where $(\sigma_1)_i=\frac{c_0\cdots c_{i-1}}{s_0\cdots
s_{i-1}}$ is a decreasing sequence which tends to $\sigma_1$ and
$(\sigma_2)_i=\frac{c'_0\cdots c'_{i-1}}{s'_0\cdots s'_{i-1}}$ is
a decreasing sequence which tends to $\sigma_2$. Let us verify the
inequality in \refeq{eq8} case by case.

If $\ss_1$ is one-sided, then $(\sigma_1)_i=1$ for $i\geq 1$ and
our inequality is equivalent to $(\sigma_2)_{k_i}\leq\frac{m_0
R_1}{n_0 R_2}$. This holds in case $\ss_2$ is two-sided
non-symmetric because of the assumption
$\frac{m_0}{R_2}\geq\frac{n_0}{R_1}$ made at the beginning of the
proof. If $\ss_2$ is two-sided weakly non-symmetric, then
$\displaystyle\lim_{i\rightarrow\infty}(\sigma_2)_{k_i}=\sigma_2=0$,
and therefore $(\sigma_2)_{k_i}\leq\frac{m_0 R_1}{n_0
R_2}(\sigma_1)_i$ for large enough $k_i$ in case $\ss_1$ is
two-sided non-symmetric.

Let now both $\ss_1$ and $\ss_2$ be two-sided strongly
non-symmetric, $B_2$ be infinite or $C$ be divisible by an
infinite power of some prime number. In this case there exists an
infinite Steinitz number $C'$ such that $c_0 c_1\cdots$ divides
$\frac{1}{C'}c'_0 c'_1\cdots$. Since
$\displaystyle\sigma_1=\lim_{i\rightarrow\infty}(\sigma_1)_i>0$
and the sequence $(\sigma_1)_i$ decreases, to verify \refeq{eq8}
it suffices to prove that
$(\sigma_2)_{k_i}\leq\frac{m_0R_1}{n_0R_2}\sigma_1$. We have
$\frac{m_0}{R_2}\geq\frac{n_0}{R_1}$, therefore it is enough to
prove that $(\sigma_2)_{k_i}\leq\sigma_1$. This clearly holds for
large enough $k_i$ if $\sigma_2<\sigma_1$. Otherwise we change the
exhaustion of $\ss_2$ such that the new symmetry index
$\tilde{\sigma}_2=\sigma_2/N$ is less than $\sigma_1$ for a finite
$N|C'$ (we replace $l'_i,r'_i$ by $(s'_i+u)/2,(s'_i-u)/2$
respectively, where $c'_i=uv$ and $v|N$ for finitely many $i$) and
repeat the same construction of $x_i,y_i$. Then $\sigma_1$ stays
the same and in the new construction the inequality
$(\tilde{\sigma}_2)_{k_i}\leq \sigma_1$ holds for large enough
$k_i$.

Finally, let both $B_1$ and $B_2$ be finite, both $\ss_1$ and
$\ss_2$ be two-sided strongly non-symmetric, $C$ be not divisible
by an infinite power of a prime number, and
$\frac{R_1\sigma_1}{B_1}\geq\frac{R_2\sigma_2}{B_2}$. Then $c'_0
c'_1\cdots=N c_0 c_1\cdots$ for an odd number $N$, and by possibly
changing the exhaustion of $\ss_2$ we can make $c'_0
c'_1\cdots=c_0 c_1\cdots$ and repeat the same construction. Then
$\frac{B_1}{B_2}=\frac{n_0}{m_0}$, and therefore
$\frac{R_1\sigma_1}{R_2\sigma_2}\geq\frac{B_1}{B_2}=\frac{n_0}{m_0}$.
Then
$\displaystyle\lim_{i\rightarrow\infty}(\sigma_2)_{k_i}=\sigma_2<\frac{m_0
R_1}{n_0 R_2}(\sigma_1)_i$ for all $i$, since $(\sigma_1)_i$ is a
decreasing sequence which does not stabilize. Now clearly
\refeq{eq8} holds for large enough $k_i$.

So far we have proven that in all cases we can choose exhaustions
of $\ss_1$ and $\ss_2$ such that $x_i=\frac{1}{2}(q_i+t_i)$ and
$y_i=\frac{1}{2}(q_i-t_i)$ are non-negative integers (in the first
case, where both $\ss_1$ and $\ss_2$ are one-sided, we just put
$t_i=q_i$, so $x_i=q_i$, $y_i=0$). Since we have $x_i+y_i=q_i$ and
$x_i-y_i=t_i$, it is easy to check \refeq{cond4} and
\refeq{cond5}. The condition in \refeq{cond6} is equivalent to
$\frac{m_0 s'_0\cdots s'_{k_i-1}}{R_2 m_{k_i}}\leq\frac{n_0
s_0\cdots s_{i-1}}{R_1 n_i}$, and under the assumption
$\frac{\delta_2}{R_2}<\frac{\delta_1}{R_1}$ or
$\frac{\delta_2}{R_2}=\frac{\delta_1}{R_1}$ its proof is analogous
to that in (i).

(iii) Let us fix an exhaustion of $\ss_1$ and choose $m_0$ in the
exhaustion of $\ss_2$ such that $R'_2 p_0|m_0$ and
$\frac{m_0}{R'_2}s'_0 s'_1\cdots$ is divisible by $S$ for some
finite $R'_2$. Moreover, we can choose $R'_2$ to be arbitrary
large (if $R_2$ is infinite, then $R'_2$ can be any divisor of
$R_2$; if $p^\infty|S$, then $R'_2$ can be $p^N$ for any $N\geq
1$). Denote $q_i=\frac{m_0 s'_0\cdots s'_{k_i-1}}{R'_2 p_i}$ and
put $x_i=y_i=q_i$ ($x_i=2q_i$, $y_i=0$ for types $O$ and $C$).
Similar to the proof of (i), the conditions in \refeq{cond4} and
\refeq{cond5} are satisfied, and \refeq{cond6} is equivalent to
the inequality $\frac{m_0 s'_0\cdots s'_{k_i-1}}{R'_2
m_{k_i}}\leq\frac{n_0 s_0\cdots s_{i-1}}{2R_1 n_i}$. Since the
exhaustion of $\ss_1$ is fixed, the right-hand side is bounded by
$\frac{\delta_1}{2R_1}$ from below. But $\frac{m_0 s'_0\cdots
s'_{k_i-1}}{R'_2 m_{k_i}}\leq\frac{1}{R'_2}$, and therefore it is
enough to choose $R'_2$ to be greater than
$\frac{2R_1}{\delta_1}$.

(iv) Choose each $k_i$ large enough so that $m_0 s'_0\cdots
s'_{k_i-1}$ is divisible by $p_i$ and denote $q_i=\frac{m_0
s'_0\cdots s'_{k_i-1}}{p_i}$, $i\geq 0$. Then put $x_i=y_i=q_i$
($x_i=2q_i$, $y_i=0$ for types $O$ and $C$). The conditions in
\refeq{cond4} and \refeq{cond5} are again satisfied, and
\refeq{cond6} is equivalent to the inequality $\frac{m_0
s'_0\cdots s'_{k_i-1}}{m_{k_i}}\leq\frac{n_0 s_0\cdots
s_{i-1}}{2R_1 n_i}$. But $\ss_2$ is sparse, therefore
$\displaystyle\lim_{i\rightarrow\infty}\frac{m_0 s'_0\cdots
s'_i}{m_i}=0$, so the inequality holds for large enough $k_i$.
\end{proof}

\begin{lemma}\label{lemma2}
Let $\ss_1=X_1(\Cal T_1)$ and $\ss_2=X_2(\Cal T_2)$ be diagonal
Lie algebras, neither of them finitary. Set $S_i=\mathrm{Stz}(\Cal
S_i)$, $S=\mathrm{GCD}(S_1,S_2)$, $R_i=\quotst(S_i,S)$, and
$\delta_i=\delta(\Cal T_i)$ for $i=1,2$. We assume that $R_1$ is
finite.
\begin{itemize}
\item[(i)] Assume that $\ss_1$ and $\ss_2$ are non-sparse, both
$R_1$ and $R_2$ are finite, and $S$ is not divisible by an
infinite power of any prime number. In addition, let
$(X_1,X_2)=(A,C)$, $(A,O)$, $(O,C)$, or $(C,O)$. If
$2\frac{R_1}{\delta_1}<\frac{R_2}{\delta_2}$, then $\ss_1$ admits
an injective homomorphism into $\ss_2$. If
$2\frac{R_1}{\delta_1}=\frac{R_2}{\delta_2}$, $\ss_1$ admits an
injective homomorphism into $\ss_2$ unless $\ss_1$ is pure and
$\ss_2$ is dense.

\item[(ii)] Assume that $\ss_1$ and $\ss_2$ are non-sparse, both
$R_1$ and $R_2$ are finite, and $S$ is not divisible by an
infinite power of any prime number. In addition, assume that
$(X_1,X_2)=(C,A)$ or $(O,A)$. If
$\frac{R_1}{\delta_1}<\frac{R_2}{\delta_2}$, then $\ss_1$ admits
an injective homomorphism into $\ss_2$. If
$\frac{R_1}{\delta_1}=\frac{R_2}{\delta_2}$, $\ss_1$ admits an
injective homomorphism into $\ss_2$ unless $\ss_1$ is pure and
$\ss_2$ is dense.

\item[(iii)] Assume that $\ss_1$ and $\ss_2$ are non-sparse. If
$R_2$ is infinite or $S$ is divisible by an infinite power of some
prime number, then $\ss_1$ admits an injective homomorphism into
$\ss_2$.

\item[(iv)] If $\ss_2$ is sparse, then $\ss_1$ admits an injective
homomorphism into $\ss_2$.
\end{itemize}
\end{lemma}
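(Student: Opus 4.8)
The plan is to run the proof of \refle{lemma1} again essentially verbatim, with the diagram \refeq{diag10} replaced by one whose bottom row is an exhaustion of $\ss_2$ of the appropriate type $X_2$ and whose vertical arrows $\theta_i$ are diagonal homomorphisms into algebras of type $X_2$. The only new point is that commutativity of such a mixed-type diagram is governed by the mixed-type part of \refle{BZhLemma}, which imposes the extra requirement ``$x_{i+1}=y_{i+1}$'' when $(X_1,X_2)=(A,O)$ or $(A,C)$, ``$x_{i+1}$ even'' when $(X_1,X_2)=(O,C)$ or $(C,O)$, and nothing extra when $(X_1,X_2)=(C,A)$ or $(O,A)$. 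So the whole argument reduces to observing that the families $\{(x_i,y_i)\}$ built in the proof of \refle{lemma1} can be taken to meet these constraints, and to tracking how the constraints affect the dimension inequality \refeq{cond6}.

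For part (i) (types $(A,C)$, $(A,O)$, $(O,C)$, $(C,O)$): in the first two cases $\ss_1$ is of type $A$ and we take the choice $x_i=y_i=q_i$ of \refle{lemma1}(i); in the last two cases $\ss_1$ is of type $O$ or $C$ (hence with self-dual natural module) and we take $x_i=2q_i$, $y_i=0$. In each case $x_i+y_i=2q_i$, so \refeq{cond6} is exactly the inequality $\frac{m_0 s'_0\cdots s'_{k_i-1}}{R_2 m_{k_i}}\le\frac{n_0 s_0\cdots s_{i-1}}{2R_1 n_i}$ occurring in \refle{lemma1}(i), and the analysis there (pick $\alpha$ strictly between $\frac{\delta_2}{R_2}$ and $\frac{\delta_1}{2R_1}$ when $2\frac{R_1}{\delta_1}<\frac{R_2}{\delta_2}$; the pure/dense case split when equality holds) applies unchanged. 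Condition \refeq{cond4} becomes $s_i q_{i+1}=q_i s'_{k_i}\cdots s'_{k_{i+1}-1}$ as in \refle{lemma1}, and \refeq{cond5} holds trivially because $x_{i+1}-y_{i+1}=0$ and, when the target is of type $O$ or $C$, the composite into $\gg_{i+1}$ factors through a self-dual module so both sides of \refeq{cond5} vanish. For part (ii) (types $(C,A)$, $(O,A)$) the source $\ss_1$ again has self-dual natural module and \refle{BZhLemma} imposes no constraint; we take $x_i=q_i$, $y_i=0$ as in the one-sided subcase of \refle{lemma1}(ii). Then $x_i+y_i=q_i$, so \refeq{cond6} reads $\frac{m_0 s'_0\cdots s'_{k_i-1}}{R_2 m_{k_i}}\le\frac{n_0 s_0\cdots s_{i-1}}{R_1 n_i}$ (no factor $2$), and the proof of the one-sided subcase of \refle{lemma1}(ii) carries over to give the conclusion under $\frac{R_1}{\delta_1}<\frac{R_2}{\delta_2}$, resp. under equality with the pure/dense exception; \refeq{cond5} is vacuous since the source module is self-dual, so the ``$c$''-conditions of \refle{lemma1} are irrelevant here.

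For parts (iii) and (iv) one uses the same type-adapted choices: $x_i=y_i=q_i$ if $X_1=A$ and $X_2\in\{O,C\}$; $x_i=2q_i$, $y_i=0$ if $(X_1,X_2)\in\{(O,C),(C,O)\}$; $x_i=q_i$, $y_i=0$ if $X_1\in\{O,C\}$; and the choice of \refle{lemma1} if $X_1=X_2$. In every case the constraints of \refle{BZhLemma} are met, and the remaining verification of \refeq{cond6} is identical to that in \refle{lemma1}(iii) (choose $R'_2$ larger than $2R_1/\delta_1$, which is legitimate since $R_2$ is infinite or $p^\infty\mid S$) and \refle{lemma1}(iv) (use $\delta_2=0$).

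The main obstacle, modest as it is, is bookkeeping: for homomorphisms into or out of a type $O$ or $C$ algebra the signature is only well-defined up to the equivalence $V\cong V^{\ast}$, so one must check that the parity/symmetry constraints of \refle{BZhLemma} are genuinely satisfied by the $\theta_i$ constructed above and propagate consistently through the induction (e.g. a given $x_i$ is never forced to be simultaneously even and odd), and that the resulting inequality \refeq{cond6} differs from the same-type case only by the harmless factor $2$ present in (i) and (iii).
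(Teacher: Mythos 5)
Your proposal is correct and follows essentially the same approach as the paper: re-run the diagram and conditions \refeq{cond4}--\refeq{cond6} of \refle{lemma1} with type-adapted $(x_i,y_i)$ (namely $x_i=y_i=q_i$ for $(A,O)$/$(A,C)$, $x_i=2q_i,y_i=0$ for $(O,C)$/$(C,O)$, $x_i=q_i,y_i=0$ for $(O,A)$/$(C,A)$), check that the constraints of \refle{BZhLemma} are met, and observe that the inequality verification carries over with or without the factor $2$. You make explicit a point the paper leaves implicit: when $\ss_1$ is of type $O$ or $C$, the natural module of each $\ss_i$ is self-dual, so the signature of any homomorphism out of $\ss_i$ is determined by $s$ and $z$ alone and \refeq{cond5} is not a genuine constraint. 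This is a useful clarification, since the paper's proof of (ii) refers to ``both cases'' of \refle{lemma1}(ii) including the two-sided non-symmetric one, which as written requires a finiteness of $B_1$ that is not among the hypotheses of \refle{lemma2}(ii); your route sidesteps that entirely.

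One small slip in the justification: for $(X_1,X_2)=(O,C)$ or $(C,O)$ you take $x_i=2q_i$, $y_i=0$, so $x_{i+1}-y_{i+1}\neq 0$ and the two sides of \refeq{cond5} certainly do not \emph{vanish}. The correct reason \refeq{cond5} holds in that case is the self-duality of the $\ss_i$-natural module: with the convention $r_i=0$ one has $c_i=s_i$ and $c'_{k_i}\cdots c'_{k_{i+1}-1}=s'_{k_i}\cdots s'_{k_{i+1}-1}$ for these types, so \refeq{cond5} coincides with \refeq{cond4} rather than being the trivial identity $0=0$. This does not affect the conclusion, since \refeq{cond4} has already been arranged, but the stated reason should be corrected.
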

\begin{proof}
The proofs of all four statements in the lemma are analogous to
the corresponding proofs of \refle{lemma1}. We will point out only
the essential differences.

(i) If $(X_1,X_2)=(A,C)$ or $(A,O)$, we put $x_i=y_i=q_i$ as in
the proof of \refle{lemma1} (i). If $(X_1,X_2)=(O,C)$ or $(C,O)$,
we put $x_i=2q_i$, $y_i=0$. Since we are dealing with Lie algebras
of different types we have to pay attention the additional
conditions of \refle{BZhLemma}, which are obviously satisfied. The
rest of the proof is the same and the diagram in \refeq{diag10}
(with Lie algebras of corresponding types) yields an injective
homomorphism of $\ss_1$ into $\ss_2$.

(ii) Since $\ss_1$ is of type $O$ or $C$, $\ss_1$ is one-sided.
The Lie algebra $\ss_2$ is not two-sided symmetric because
$2^\infty$ does not divide $S_2$. Thus $\ss_2$ is either one-sided
or two-sided non-symmetric. Both cases were considered in
\refle{lemma1} (ii) for type $A$ Lie algebras. The construction of
an injective homomorphism of $\ss_1$ into $\ss_2$ is the same in
the case we now consider.

(iii), (iv) If $(X_1,X_2)=(A,C)$ or $(A,O)$, we put $x_i=y_i=q_i$,
and if $(X_1,X_2)=(C,A)$, $(O,A)$, $(O,C)$, or $(C,O)$, we put
$x_i=2q_i$, $y_i=0$. The proofs of (iii) and (iv) are completed in
a similar way to the proofs of \refle{lemma1} (iii) and (iv).
\end{proof}

\begin{corollary}\label{cor1} The three finitary Lie algebras $\sl(\infty)$,
$\so(\infty)$, and $\sp(\infty)$ admit an injective homomorphism
into any diagonal Lie algebra.
\end{corollary}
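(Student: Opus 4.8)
The plan is to deduce the corollary directly from the lemmas just proved, after reducing the general target to a convenient normal form. Recall that each of $\sl(\infty)$, $\so(\infty)$, and $\sp(\infty)$ is finitary, so by the Remark following \refth{BZh1} its sequence of triples $\Cal T_1$ has $\mathrm{Stz}(\Cal S_1)$ finite. Consequently, in the notation of \refle{lemma1} and \refle{lemma2}, we have $S_1$ finite, hence $R_1$ divides $S_1$ and is finite, and $\delta_1$ can be taken equal to $1$ (a finitary algebra is pure with density index $1$; indeed one may choose the exhaustion $\sl(i)\subset\sl(i+1)$ of signature $(1,0,1)$, for which $\delta_i\equiv 1$). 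Likewise $\so(\infty)$ and $\sp(\infty)$ are one-sided pure with $\delta_1=1$. So the left-hand side $\ss_1$ always satisfies the standing hypotheses of the lemmas ($R_1$ finite, non-sparse), and moreover $\frac{R_1}{\delta_1}=R_1$ is simply a fixed finite integer.

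Next I would split into cases according to the diagonal target $\ss_2=X_2(\Cal T_2)$. If $\ss_2$ is sparse, apply part (iv) of \refle{lemma1} (when $\ss_1$ has the same type as $\ss_2$) or of \refle{lemma2} (when the types differ), and we are done immediately. If $\ss_2$ is non-sparse but either $R_2=\quotst(S_2,S)$ is infinite or $S=\mathrm{GCD}(S_1,S_2)$ is divisible by an infinite power of some prime, apply part (iii) of the relevant lemma; again done. The remaining case is $\ss_2$ non-sparse with $R_2$ finite and $S$ not divisible by any infinite prime power. Here $S_1$ being finite forces $S=\mathrm{GCD}(S_1,S_2)$ finite, so the "$S$ not divisible by an infinite prime power" hypothesis is automatic, and all the finiteness hypotheses of parts (i)/(ii) hold. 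Now the point is that $\frac{R_1}{\delta_1}=R_1$ is a \emph{fixed} finite number while $\frac{R_2}{\delta_2}$ can be made arbitrarily large by replacing the exhaustion of $\ss_1$ so as to enlarge $R_1$... — no: the cleaner move is that we are free to choose the exhaustion of the finitary algebra $\ss_1$, and by choosing it so that $R_1$ is small (indeed, by choosing the exhaustion with signatures $(1,0,1)$, $S_1$ stabilizes and $R_1=\quotst(S_1,S)$ becomes $1$ once $S_1\mid S$, which we can arrange by passing to a tail), we get $\frac{R_1}{\delta_1}=1$. Then $\frac{R_1}{\delta_1}=1\le\frac{R_2}{\delta_2}$, with strict inequality unless $\ss_2$ is pure with $\delta_2=1$ and $R_2=1$; and $2\frac{R_1}{\delta_1}=2<\frac{R_2}{\delta_2}$ in all but finitely many small cases, which one checks by hand. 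When the relevant inequality is strict we invoke part (i) or (ii) directly. When it is an equality we invoke the equality clause of part (i)/(ii), which grants the homomorphism unless $\ss_1$ is pure and $\ss_2$ is dense; but a finitary $\ss_1$ is never dense while $\ss_2$ here is pure, so the exceptional configuration cannot occur, except possibly in the borderline numerics where instead one uses \refprop{prop1} and its Remark, which handle the embedding of all three finitary algebras into any one-sided pure type-$A$ algebra directly.

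The main obstacle is the bookkeeping in this last case: matching the type pair $(X_1,X_2)$ to the correct lemma (\refle{lemma1} for equal types, \refle{lemma2} for the mixed pairs $(A,O),(A,C),(O,C),(C,O),(C,A),(O,A)$), verifying that one of the structural alternatives in part (ii) of \refle{lemma1} actually holds — since $\ss_1$ finitary is one-sided, the "both one-sided" or "$\ss_1$ one-sided, $\ss_2$ two-sided non-symmetric" bullets apply provided $\ss_2$ is not two-sided symmetric, and if $\ss_2$ \emph{is} two-sided symmetric of type $A$ one instead uses \refth{BZh2}(i) together with \refprop{prop1} to reach it through a pure one-sided type-$A$ (or type-$O$/$C$) algebra — and confirming that after these reductions every diagonal $\ss_2$ has been covered. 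Everything else is a routine application of results already in hand; no new construction is needed beyond \refprop{prop1}, \refle{lemma1}, and \refle{lemma2}.
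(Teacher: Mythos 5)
Your argument is built on a false premise that breaks it at the start. You claim that a finitary algebra is pure with density index $1$, asserting that the exhaustion $\sl(i)\subset\sl(i+1)$ of signature $(1,0,1)$ has $\delta_i\equiv 1$. This is wrong. With the paper's conventions ($n_1=1$, $s_1=n_2$, and $s_i=l_i+r_i=1$ for $i\geq 2$), one has $\delta_i=\frac{s_1\cdots s_{i-1}}{n_i}=\frac{n_2}{n_i}\to 0$, because the product $\prod s_i=\mathrm{Stz}(\Cal S_1)$ stabilizes (it is finite for a finitary algebra by the Remark after \refth{BZh1}) while $n_i\to\infty$. So $\sl(\infty)$, $\so(\infty)$, $\sp(\infty)$ are \emph{sparse}, not pure, and every quantity of the form $R_1/\delta_1$ that your case analysis relies on is undefined (infinite). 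Compounding this, \refle{lemma1} and \refle{lemma2} both carry the explicit standing hypothesis that \emph{neither} $\ss_1$ nor $\ss_2$ is finitary, so invoking them with $\ss_1$ one of the three finitary algebras is a hypothesis violation; and this hypothesis is not cosmetic — the proofs of parts (iii) and (iv) use the lower bound $\frac{n_0 s_0\cdots s_{i-1}}{n_i}\geq\delta_1>0$, which fails precisely in the finitary case. Thus the central case analysis (splitting on sparseness of $\ss_2$ and then comparing $R_1/\delta_1$ to $R_2/\delta_2$) cannot get started.

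The paper's proof is short and structured entirely differently. If $\ss$ is finitary, the three finitary algebras embed into one another, done. If $\ss$ is non-finitary, one produces an intermediate pure one-sided type-$A$ algebra $\ss'$ (which, being pure, is automatically non-finitary) admitting an injective homomorphism into $\ss$ via \refle{lemma2}(iii) or (iv) — this is the legitimate invocation of your lemmas, with both sides non-finitary — and then embeds each of $\sl(\infty)$, $\so(\infty)$, $\sp(\infty)$ into $\ss'$ using \refprop{prop1} and its Remark. You do mention \refprop{prop1}, but only as a patch for "borderline numerics" and the two-sided symmetric target; in fact the corollary has no correct proof without it, because \refprop{prop1} is exactly what supplies the embedding out of the finitary (sparse, $S_1$ finite) regime that \refle{lemma1}/\refle{lemma2} cannot reach.
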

\begin{proof}
Let $\ss$ be a diagonal Lie algebra. If $\ss$ is finitary, then
$\ss$ is isomorphic to one of the three Lie algebras
$\sl(\infty)$, $\so(\infty)$, $\sp(\infty)$. Hence $\sl(\infty)$,
$\so(\infty)$, admit $\sp(\infty)$ admit an injective homomorphism
into $\ss$. If $\ss$ is not finitary, then (by an easy corollary
from \refle{lemma2} (iii), (iv)) there exists a pure one-sided Lie
algebra of type $A$ $\ss'$ which admits an injective homomorphism
into $\ss$. Then each of the Lie algebras $\sl(\infty)$,
$\so(\infty)$, $\sp(\infty)$ can be mapped by an injective
homomorphism into $\ss'$ by \refprop{prop1}, and the statement
follows.
\end{proof}

\begin{prop}\label{prop2}
Let $\ss_1=X_1(\Cal T_1)$ be a subalgebra of $\ss_2=X_2(\Cal
T_2)$. Set $S_1=\mathrm{Stz}(\Cal S_1)$, $S_2=\mathrm{Stz}(\Cal
S_2)$. Then $S_1|S_2N$ for some $N\in\ZZ_{>0}$.
\end{prop}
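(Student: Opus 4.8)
The plan is to push the Dynkin index through a commutative exhaustion diagram and extract the divisibility of Steinitz numbers from its multiplicativity. Writing $\theta\colon\ss_1\hookrightarrow\ss_2$ for the inclusion, I would fix the exhaustions $\ss_1=\bigcup_{i\ge1}\ss_1^{(i)}$ and $\ss_2=\bigcup_{j\ge1}\ss_2^{(j)}$ attached to the presentations $X_1(\mathcal{T}_1)$ and $X_2(\mathcal{T}_2)$, so that every internal inclusion is diagonal with the prescribed signature and, writing $s_i=l_i+r_i$, $s'_j=l'_j+r'_j$, one has $S_1=\prod_{i\ge1}s_i$ and $S_2=\prod_{j\ge1}s'_j$ as Steinitz numbers. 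Since each $\theta(\ss_1^{(i)})$ is finite dimensional, I can choose a non-decreasing sequence $k_2\le k_3\le\cdots$ with $k_i\ge2$ and $\theta(\ss_1^{(i)})\subseteq\ss_2^{(k_i)}$ for $i\ge2$; restricting $\theta$ then produces injective homomorphisms $\theta_i\colon\ss_1^{(i)}\to\ss_2^{(k_i)}$ of classical simple Lie algebras forming an honestly commutative ladder (vertical maps $\theta_i$, horizontal maps the exhaustion inclusions), so \refle{BZhLemma} is not needed here.

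The heart of the argument is a recursion for the indices $I_i:=I^{\ss_2^{(k_i)}}_{\ss_1^{(i)}}(\theta_i)$, $i\ge2$, each of which is a strictly positive integer by \refprop{Dprop}(i) together with non-degeneracy of the pullback form $\langle\,,\rangle_{\theta_i}$. Computing the index $I^{\ss_2^{(k_{i+1})}}_{\ss_1^{(i)}}$ of the two equal composites $\ss_1^{(i)}\to\ss_2^{(k_{i+1})}$ around the $i$-th square of the ladder, and evaluating $I^{\ss_1^{(i+1)}}_{\ss_1^{(i)}}=s_i$ and $I^{\ss_2^{(k_{i+1})}}_{\ss_2^{(k_i)}}=s'_{k_i}s'_{k_i+1}\cdots s'_{k_{i+1}-1}$ by \refcor{Dcor} and its iterate (using multiplicativity of the index, \refprop{Dprop}(ii)), I obtain
$$I_i\cdot\bigl(s'_{k_i}s'_{k_i+1}\cdots s'_{k_{i+1}-1}\bigr)=s_i\cdot I_{i+1}\qquad(i\ge2).$$
Telescoping from $i=2$ to $i=m$ gives the integer identity $I_{m+1}\prod_{i=2}^{m}s_i=I_2\prod_{j=k_2}^{k_{m+1}-1}s'_j$ for every $m\ge2$.

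From this I would conclude that $\prod_{i=2}^{m}s_i$ divides $I_2\,S_2$ for every $m$ (since $\prod_{j=k_2}^{k_{m+1}-1}s'_j$ divides $S_2$), hence $\prod_{i\ge2}s_i\mid I_2\,S_2$ as Steinitz numbers, and therefore $S_1=s_1\prod_{i\ge2}s_i$ divides $s_1I_2\,S_2$; so the statement holds with $N=s_1I_2$. (If $S_1$ is finite the claim is trivial with $N=S_1$, so in particular this covers the finitary case.)

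The step I expect to be the real obstacle is justifying $I_i\ge1$ rather than merely $I_i\ge0$: the $\theta_i$ are arbitrary — not assumed diagonal — injective homomorphisms of simple Lie algebras, so one must invoke that the restriction of an invariant non-degenerate symmetric form on $\ss_2^{(k_i)}$ to the simple subalgebra $\theta_i(\ss_1^{(i)})$ is again non-degenerate (hence nonzero), which is precisely what forces the index to be a positive integer. Two minor bookkeeping points also need attention, both routine: the recursion must begin past the degenerate term $\ss_1^{(1)}=\sl(1)=0$ of the normalized exhaustion (this is the source of the factor $s_1$ in $N$), and one should note that replacing the exhaustion of $\ss_2$ by the subexhaustion $\{\ss_2^{(k_i)}\}$ can only shrink its Steinitz number, so no divisibility is lost.
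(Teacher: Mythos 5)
Your proof is correct and follows essentially the same route as the paper's: push the Dynkin index around the commutative ladder using \refprop{Dprop}(ii) and \refcor{Dcor}, telescope, and read off the Steinitz divisibility from the positivity of each $I_i$. The only differences are bookkeeping (the paper phrases the telescoped identity as $I_{\gg_{k_1}}^{\gg_{k_i}}M=I_{\ss_1}^{\ss_i}I_{\ss_i}^{\gg_{k_i}}(\theta_i)$ and takes $N=M n_1$ rather than $s_1 I_2$), and your explicit flagging that $I_i>0$ rather than merely $I_i\ge0$, which is indeed the correct point of rigor though a standard fact.
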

\begin{proof} We take $\ss:=\ss_1$ and $\gg:=\ss_2$, in order to
use the notation $\ss_i$ for an exhaustion of $\ss$. Since $\ss$
admits an injective homomorphism into $\gg$ there is a commutative
diagram
$$
\xymatrix{
\ss_1\ar[d]_{\theta_1}\ar[r]&\dots\ar[r]&\ss_i\ar[d]_{\theta_i}\ar[r]&\dots\\
\gg_{k_1}\ar[r]&\dots\ar[r]&\gg_{k_i}\ar[r]&\dots. }
$$
Set $M=I_{\ss_1}^{\gg_{k_1}}(\theta_1)$. Then, by \refprop{Dprop}
(ii), we have $\displaystyle
I_{\gg_{k_1}}^{\gg_{k_i}}M=I_{\ss_1}^{\ss_i}I_{\ss_i}^{\gg_{k_i}}(\theta_i)$
for $i\geq 1$. Then $\displaystyle \prod_{j=1}^{i-1}
I_{\ss_j}^{\ss_{j+1}}|M\prod_{j=k_1}^{k_i-1}
I_{\gg_j}^{\gg_{j+1}}$ for $i\geq 1$. Thus, $S_1|S_2M n_1$, where
$n_1$ is the dimension of the natural representation of $\ss_1$.
\end{proof}

\begin{prop}\label{prop3}
Let $\ss$ be a sparse one-sided Lie algebra of type $A$ not
isomorphic to $\sl(\infty)$. Then $\ss$ admits no non-trivial
homomorphism into a pure one-sided Lie algebra of type $A$.
\end{prop}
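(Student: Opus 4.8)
The plan is to derive a contradiction from the existence of a non-trivial (hence, by Proposition~\ref{Dprop}~(i) and the simplicity of the $\ss_i$, injective) homomorphism $\theta:\ss\to\gg$, where $\ss$ is sparse one-sided of type $A$, $\gg$ is pure one-sided of type $A$, and $\ss\nsimeq\sl(\infty)$. The key idea is to compare the growth of the indices along the two exhaustions using Proposition~\ref{Dprop}~(ii), much as in the proof of Proposition~\ref{prop2}. Since $\ss$ is one-sided we may exhaust it as $\sl(n_0)\subset\sl(n_1)\subset\cdots$ with all signatures $(s_i,0,z_i)$, so $I_{\ss_i}^{\ss_{i+1}}=s_i$ by Corollary~\ref{Dcor}. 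Likewise $\gg$ is exhausted as $\sl(m_0)\subset\sl(m_1)\subset\cdots$ with signatures $(s'_j,0,z'_j)$ and $I_{\gg_j}^{\gg_{j+1}}=s'_j$. From the commutative diagram $\theta$ induces, set $M=I_{\ss_1}^{\gg_{k_1}}(\theta_1)$; then $M\,\prod_{j=k_1}^{k_i-1}s'_j=I_{\ss_1}^{\ss_i}\,I_{\ss_i}^{\gg_{k_i}}(\theta_i)=\bigl(\prod_{j=1}^{i-1}s_j\bigr)\,I_{\ss_i}^{\gg_{k_i}}(\theta_i)$, and $I_{\ss_i}^{\gg_{k_i}}(\theta_i)\geq 1$ is a positive integer.

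First I would use sparseness to say precisely what ``$\ss\nsimeq\sl(\infty)$'' buys us: by the Remark after Theorem~\ref{BZh2}, $\mathrm{Stz}(\Cal S)=n_0 s_0 s_1\cdots$ is an infinite Steinitz number. Second, purity of $\gg$ gives (after passing to a subexhaustion) that $\delta_j=\frac{m_0 s'_0\cdots s'_{j-1}}{m_j}$ is constant and equal to a fixed $\delta>0$ for all $j$; in particular $m_j=\frac{1}{\delta}m_0 s'_0\cdots s'_{j-1}$. Now I would estimate $I_{\ss_i}^{\gg_{k_i}}(\theta_i)$ from below: the natural $\gg_{k_i}$-module restricted to $\ss_i$ is a sum of $\ss_i$-modules, and by Proposition~\ref{Dprop}~(iii),(iv) the index of any non-trivial $\ss_i$-submodule is $\geq\frac{\dim(\text{that module})}{\dim\ss_i}\cdot c$ for an absolute constant $c>0$ coming from the smallest Casimir value $\langle\lambda,\lambda+2\rho\rangle$ of a non-trivial $\sl(n_i)$-module. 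But the key point is that $\theta_i$ being \emph{injective} forces at least one such submodule to be non-trivial and, moreover, the multiplicity-and-dimension bookkeeping forces $\dim V_{k_i}=m_{k_i}$ to be large relative to $n_i$: combining $m_{k_i}=\frac{1}{\delta}m_0 s'_0\cdots s'_{k_i-1}=\frac{1}{\delta}\cdot\frac{M n_0}{?}\cdot(\text{integer})\cdot\frac{\prod_{j<i}s_j}{1}\cdot(\text{stuff})$ with sparseness $\lim_i \frac{n_0 s_0\cdots s_{i-1}}{n_i}=0$ shows $m_{k_i}/n_i\to\infty$.

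The main obstacle — and the heart of the argument — is turning this growth of $m_{k_i}/n_i$ into a contradiction. The mechanism I expect to use is that the image $\theta_i(\ss_i)=\sl(n_i)$ sitting inside $\sl(m_{k_i})$ with a natural module that contains a huge number of trivial summands is \emph{not} compatible, under the next map $\psi$ in $\gg$'s exhaustion, with the image of $\theta_{i+1}(\ss_{i+1})$: concretely, the signature $(x_i,y_i,z)$ of $\theta_i$ has $z=m_{k_i}-x_i n_i\to\infty$ relatively, yet purity of $\gg$ means the ``trivial part'' cannot keep growing — more precisely, the density index $\delta(\Cal T_2)$ computed through the images of $\ss_i$ would be forced to $0$, contradicting $\delta(\Cal T_2)>0$. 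So I would argue: the subexhaustion $\{\theta_i(\ss_i)\}$ is itself a diagonal exhaustion of a subalgebra of $\gg$, with density index at most $\lim_i \frac{(\dim\ss_1\text{-stuff})}{m_{k_i}}$-type quantity that sparseness of $\ss$ drives to $0$; by Corollary~\ref{diagCor} the inclusion $\theta(\ss)\subset\gg$ is diagonal, and by Theorem~\ref{BZh1}~($\Cal A_1$) a diagonal subalgebra cannot have strictly smaller density type unless... — here one pins down that a pure (hence nonzero-density) algebra has no sparse exhaustion even after passing to subalgebras reachable this way, which contradicts what we built. I would double-check the one delicate case where $\ss$ could still be $\sl(\infty)$-like (finite $\mathrm{Stz}(\Cal S)$) is excluded exactly by the hypothesis $\ss\nsimeq\sl(\infty)$, so the growth $m_{k_i}/n_i\to\infty$ is genuine and the contradiction is complete.
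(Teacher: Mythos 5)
There is a genuine gap. Your plan never gets past the setup: the central mechanism of the paper's proof is absent and the substitutes you sketch do not work. First, the lower bound on the index of a non-trivial constituent that you propose is wrong: the minimal nonzero Casimir value $\langle\lambda,\lambda+2\rho\rangle_{\sl(n)}$ is attained at the natural module, where $I=1$ and $\dim F_n^\lambda/\dim\sl(n)=n/(n^2-1)$, forcing $\langle\lambda,\lambda+2\rho\rangle=(n^2-1)/n\sim n$; there is no absolute constant $c$. The paper instead needs an \emph{upper} bound on $\langle\lambda,\lambda+2\rho\rangle$, and the real work is proving that the weights appearing in $W_{k_i}\downarrow\sl(n_i)$ have uniformly bounded ``spread'' $\lambda_1-\lambda_{n_i}\leq d$ (with all middle entries equal for large $i$). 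That is obtained via the Gelfand--Tsetlin rule and \refcor{LRcor}, showing $d_i\geq d_{i+1}$ and hence that $d_i$ stabilizes; it forces $\langle\lambda,\lambda+2\rho\rangle\leq c_0 n_i$ rather than the generic $O(n_i^2)$, which, combined with the two-path index identity $I(\theta_1)m_2\cdots m_i=l_1\cdots l_{i-1}I(\theta_i)$, gives $\frac{l_1\cdots l_{i-1}}{n_i}\geq c_1>0$ and contradicts sparseness. None of this is in your proposal.

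Second, your fallback via diagonality and density types does not close the argument. \refcor{diagCor} requires a chain $\hh\subset\gg\subset\ss$ in which the \emph{composite} is already known to be diagonal; you have only $\theta(\ss)\subset\gg$, and diagonality of that single inclusion is exactly what would have to be proved (indeed, the paper's \refcor{cor3} proves it only under a non-sparseness hypothesis, using \refprop{prop4}, whose argument parallels the present one). Even granting diagonality, condition $(\Cal A_1)$ of \refth{BZh1} compares two exhaustions of \emph{the same} (or isomorphic) Lie algebras; it says nothing about a subalgebra having a different density type from the ambient algebra. A pure algebra certainly can have sparse or finitary diagonal subalgebras --- $\sl(\infty)\subset\sl(2^\infty)$ is one such --- so ``pure algebras have no sparse subexhaustions of subalgebras'' is false as a principle and cannot supply the contradiction. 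The direction in which you establish growth ($m_{k_i}/n_i\to\infty$) is by itself unproblematic and does not yield a contradiction without the quantitative control on the weights described above.
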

\begin{proof}
Assume for the sake of a contradiction that there is an injective
homomorphism of $\ss$ into some pure one-sided Lie algebra of type
$A$. Let $\ss$ be exhausted as
$\sl(n_1)\subset\sl(n_2)\subset\cdots$, each inclusion
$\sl(n_i)\rightarrow\sl(n_{i+1})$ being of signature
$(l_i,0,z_i)$. Recall that by the definition of a sparse Lie
algebra, $\displaystyle \lim_{i\to\infty}\frac{n_1 l_1\cdots
l_{i-1}}{n_i}=0$. Then there is a commutative diagram
\begin{equation}\label{diag8}
\xymatrix{
\sl(n_1)\ar[d]_{\theta_1}\ar[r]&\dots\ar[r]&\sl(n_i)\ar[d]_{\theta_i}\ar[rr]^{(l_i,0,z_i)}&&\sl(n_{i+1})\ar[d]_{\theta_{i+1}}\ar[r]&\dots\\
\sl(m_1)\ar[r]&\dots\ar[r]&\sl(m_1\cdots
m_i)\ar[rr]^{(m_{i+1},0,0)}&&\sl(m_1\cdots m_{i+1})\ar[r]&\dots. }
\end{equation}
The lower row constitutes an exhaustion of the pure Lie algebra
$\sl(m_1 m_2\cdots)$.

Denote by $V_i$ the natural $\sl(m_1\cdots m_i)$-module for $i\geq
1$. Note that $\theta_i$ makes $V_i$ into an $\sl(n_i)$-module.
Let
\begin{equation}\label{eq9}
V_i\downarrow\sl(n_i)\cong\bigoplus_{\lambda\in
H_i}T_\lambda\otimes F_{n_i}^\lambda\end{equation} be the
decomposition into a direct sum of isotypic components. Here
$T_\lambda=\text{Hom}_{\sl(n_i)}(F_{n_i}^\lambda,
V_i\downarrow\sl(n_i))$ is a trivial $\sl(n_i)$-module, and $H_i$
is the set of all highest weights appearing in this decomposition.
We can rewrite \refeq{eq9} (non-canonically) as
\begin{equation}\label{eq10}
V_i\downarrow\sl(n_i)\cong\bigoplus_{\lambda\in
H_i}\underbrace{F_{n_i}^\lambda\oplus\dots\oplus
F_{n_i}^\lambda}_{t_\lambda},\end{equation} where $t_\lambda=\dim
T_\lambda$. Since all weights $\lambda\in H_i$ are dominant, for
each $\lambda=(\lambda_1,\dots,\lambda_{n_i})$,
$\lambda_1-\lambda_{n_i}$ is a non-negative integer. Set
$\displaystyle d_i=\max_{\lambda\in
H_i}(\lambda_1-\lambda_{n_i})$. We define $H(\phi)$ and $d(\phi)$
in a similar way for an arbitrary injective homomorphism $\phi$ of
finite-dimensional classical simple Lie algebras of type $A$, so
that $H(\theta_i)=H_i$ and $d(\theta_i)=d_i$.

Let us show that $d_i\geq d_{i+1}$ for $i\geq 1$. By $\phi_i$ we
denote the injective homomorphism $\displaystyle\sl(m_1\cdots
m_i)\stackrel{(m_{i+1},0,0)}{\longrightarrow}\sl(m_1\cdots
m_{i+1})$ as in \refeq{diag8}. Notice first that
$H(\phi_i\circ\theta_i)=H(\theta_i)=H_i$ and
$\dim\text{Hom}_{\sl(n_i)}(F_{n_i}^\lambda, V_{i+1})=
m_{i+1}\dim\text{Hom}_{\sl(n_i)}(F_{n_i}^\lambda, V_i)$ for all
$\lambda\in H_i$. Furthermore,
$d(\phi_i\circ\theta_i)=d(\theta_i)=d_i$.

Let $\lambda\in H_{i+1}$ be a weight such that
$\lambda_1-\lambda_{n_{i+1}}=d_{i+1}$.  Since $(l_i,0,z_i)$ is the
signature of the diagonal injective homomorphism
$\displaystyle\sl(n_i)\rightarrow\sl(n_{i+1})$, there is a chain
of inclusions $\sl(n_i)\subset\sl(l_i n_i)\subset\sl(l_i
n_i+1)\subset\cdots\subset\sl(l_i n_i+z_i)=\sl(n_{i+1})$ such that
their composition is the original map in \refeq{diag8}. Applying
Gelfand-Tsetlin rule (see \refth{GTrule}) repeatedly we obtain
that $F_{n_{i+1}}^\lambda\downarrow\sl(l_i n_i+z_i-j)$ has a
submodule with highest weight
$(\lambda_1,\lambda_2,\dots,\lambda_{l_i n_i+z_i-j-2},\lambda_{l_i
n_i+z_i-j-1},\lambda_{n_{i+1}})$ for $j=1,\dots,z_i$. We then
apply \refcor{LRcor} to the submodule of
$F_{n_{i+1}}^\lambda\downarrow\sl(l_i n_i)$ with highest weight
$(\lambda_1,\dots,\lambda_{l_i n_i-1},\lambda_{n_{i+1}})$ and see
$\hat{\lambda}:=(\lambda_1+\cdots+\lambda_{l_i},\lambda_{l_i+1}+\cdots+\lambda_{2l_i},\dots,\lambda_{l_i
n_i-l_i+1}+\cdots+\lambda_{l_i n_i-1}+\lambda_{n_{i+1}})\in
H(\phi_i\circ\theta_i)$, i.e. the $\sl(n_i)$-module with highest
weight $\hat{\lambda}$ is a constituent of
$F_{n_{i+1}}^\lambda\downarrow\sl(n_i)$. Hence,
$d(\phi_i\circ\theta_i)\geq(\hat{\lambda}_1-\hat{\lambda}_{n_i})=(\lambda_1+\cdots+\lambda_{l_i})-(\lambda_{l_i
n_i-l_i+1}+\cdots+\lambda_{l_i
n_i-1}+\lambda_{n_{i+1}})\geq\lambda_1-\lambda_{n_{i+1}}=d_{i+1}$,
where the latter inequality holds because $\lambda$ is dominant.
Since $d(\phi_i\circ\theta_i)=d_i$, we have the desired inequality
$d_i\geq d_{i+1}$.

Since $\{d_i\}$ is a decreasing sequence of positive integers, it
stabilizes, so there exists $d\in\ZZ_{>0}$ such that $d_i=d$ for
all $i\geq J$. Pick $K$ such that $l_J\cdots l_{K-1}>d$ (this is
possible since $\ss$ is not isomorphic to $\sl(\infty)$, and
therefore $\displaystyle \prod_{i=1}^\infty l_i$ is infinite).
Consider now the following part of the diagram in \refeq{diag8}:
$$
\xymatrix{
\sl(n_J)\ar[d]_{\theta_J}\ar[r]&\dots\ar[r]&\sl(n_K)\ar[d]_{\theta_K}\\
\sl(m_1\cdots m_J)\ar[r]&\dots\ar[r]&\sl(m_1\cdots m_K).}
$$
The injective homomorphism $\sl(n_J)\rightarrow\sl(n_K)$ is
diagonal of signature $(l,0,z)$, where $l=l_J\cdots l_{K-1}$ and
$z=n_K-ln_J$. Using similar arguments as above we obtain that
$\hat{\lambda}=(\lambda_1+\cdots+\lambda_l,\lambda_{l+1}+\cdots+\lambda_{2l},\dots,\lambda_{n_K-l+1}+\cdots+\lambda_{n_K-1}+\lambda_{n_K})\in
H_J$ for any $\lambda\in H_K$. Then we have
$\lambda_1+\cdots+\lambda_l-(\lambda_{n_K-l+1}+\cdots+\lambda_{n_K})\leq
d$. If $\lambda_{d+1}\neq\lambda_{n_K-d}$, then
$\lambda_{d+1}\geq\lambda_{n_K-d}+1$, in which case
$\lambda_1+\cdots+\lambda_l-(\lambda_{n_K-l+1}+\cdots+\lambda_{n_K})\geq(\lambda_1+\cdots+\lambda_{d+1})-(\lambda_{n_K-d}+\cdots+\lambda_{n_K})\geq
d+1$ as $l>d$. Hence, $\lambda_{d+1}=\lambda_{n_K-d}$ which yields
$\lambda_{d+1}=\lambda_{d+2}=\cdots=\lambda_{n_K-d}$. We thus
conclude that for $i\geq K$ each integral dominant weight
appearing in $H_i$ has the property that all its values apart from
the first $d$ and the last $d$ must be equal.

Let us calculate the index $I_{\sl(n_1)}^{\sl(m_1\cdots m_i)}$ of
the corresponding composition of homomorphisms in \refeq{diag8}.
Using \refprop{Dprop} (ii) and \refcor{Dcor}, we compute
$I_{\sl(n_1)}^{\sl(m_1\cdots m_i)}=I(\theta_1)m_2\cdots m_i$ by
following down $\theta_1$ and to the right; similarly we compute
$I_{\sl(n_1)}^{\sl(m_1\cdots m_i)}=l_1\cdots l_{i-1}I(\theta_i)$
by going to the right and then down $\theta_i$. By \refprop{Dprop}
(iii), (iv) we have
\begin{equation}\label{eq2}
I(\theta_i)=\sum_{\lambda\in H_i}t_\lambda
I(F_{n_i}^\lambda)=\frac{1}{n_i^2-1}\sum_{\lambda\in
H_i}t_\lambda\dim
F_{n_i}^\lambda\langle\lambda,\lambda+2\rho\rangle_{\sl(n_i)},
\end{equation} where $2\rho$ is the sum of all the positive roots of
$\sl(n_i)$.

Note that
$\langle\lambda,\lambda+2\rho\rangle_{\sl(n_i)}=(\tilde{\lambda},\tilde{\lambda}+2\rho)$,
where
$\displaystyle\tilde{\lambda}_j=\lambda_j-\frac{1}{n_i}\sum_{k=1}^{n_i}\lambda_k$
for $j=1,\dots,n_i$, $2\rho=(n_i-1,n_i-3,\dots,-(n_i-1))$, and
$(\;,\;)$ is the usual scalar product on $\CC^{n_i}$.

Fix $i\geq K$, using the notation from above, so that
$\lambda_1-\lambda_{n_i}\leq d$ and
$\lambda_{d+1}=\lambda_{d+1}=\cdots=\lambda_{n_i-d}$. Set
$\alpha=\tilde{\lambda}_{d+1}$, so that
$|\tilde{\lambda}_j-\alpha|=0$ for $j=d+1,d+2,\dots,n_i-d$. Then
$|\tilde{\lambda}_j-\alpha|=|\lambda_j-\lambda_{d+1}|\leq d$ for
all $j$. Since $\displaystyle\sum_{j=1}^{n_i}\tilde{\lambda}_j=0$
and
$\tilde{\lambda}_1-\tilde{\lambda}_{n_i}=\lambda_1-\lambda_{n_i}\leq
d$, we have $|\tilde{\lambda}_j|\leq d$ for all $j$. Hence,
\begin{align*}
|\langle\lambda,\lambda+2\rho\rangle_{\sl(n_i)}|
&=|(\tilde{\lambda},\tilde{\lambda}+2\rho)|
=\left|\sum_{j=1}^{n_i}\tilde{\lambda}_j(\tilde{\lambda}_j+n_i-2j+1)\right|
\\
&=\left|\sum_{j=1}^{n_i}\tilde{\lambda}_j(\tilde{\lambda}_j-\alpha-2j)
+(n_i+1+\alpha)\sum_{j=1}^{n_i}\tilde{\lambda}_j\right| \\
&=\left|\sum_{j=1}^{n_i}\tilde{(\lambda}_j-\alpha+\alpha)(\tilde{\lambda}_j-\alpha-2j)\right|\\
&=\left|\sum_{j=1}^{n_i}(\tilde{\lambda}_j
-\alpha)^2-2\sum_{j=1}^{n_i}(\tilde{\lambda}_j-\alpha)j
+\sum_{i=1}^{n_i}(\alpha(\tilde{\lambda}_j-\alpha)-2\alpha
j)\right|\\ &=\left|\sum_{j=1}^{n_i}(\tilde{\lambda}_j-\alpha)^2
-2\sum_{j=1}^d(\tilde{\lambda}_j-\alpha)j-2\sum_{j=n_i-d+1}^{n_i}(\tilde{\lambda}_j
-\alpha)j-n_i\alpha^2-n_i(n_i+1)\alpha\right|\\
&\leq\sum_{j=1}^{n_i}d^2+2\sum_{j=1}^d j d
+2\sum_{j=n_i-d+1}^{n_i}j d+n_i\alpha^2+n_i(n_i+1)|\alpha|\\
&=2n_i d^2+2(n_i+1)d^2+n_i\alpha^2+n_i(n_i+1)|\alpha|.
\end{align*} Since
$\tilde{\lambda}_1+\cdots+\tilde{\lambda}_d+\alpha(n_i-2d)+\tilde{\lambda}_{n_i-d+1}+\cdots+\tilde{\lambda}_{n_i}=0$
(which implies $|\alpha|\leq\frac{2d^2}{n_i-2d}$), we obtain the
following inequality:
$$|\langle\lambda,\lambda+2\rho\rangle_{\sl(n_i)}|\leq 2d^2
n_i+2d^2(n_i+1)+\frac{4d^4
n_i}{(n_i-2d)^2}+\frac{2d^2n_i(n_i+1)}{n_i-2d}\leq c_0 n_i$$ for
all $i\geq K$, where $c_0$ is some positive constant. Then from
\refeq{eq2} we have $\displaystyle I(\theta_i)\leq\frac{c_0
n_i}{n_i^2-1}\sum_{\lambda\in H_i}t_\lambda\dim
F_{n_i}^\lambda=\frac{c_0 n_i}{n_i^2-1}m_1\cdots m_i$. Hence,
$I(\theta_1)m_2\cdots m_i=I_{\sl(n_1)}^{\sl(m_1\cdots
m_i)}=l_1\cdots l_{i-1}I(\theta_i)\leq l_1\cdots l_{i-1}\frac{c_0
n_i}{n_i^2-1}m_1\cdots m_i$. This implies $\frac{I(\theta_1)}{c_0
m_1}\leq l_1\cdots l_{i-1}\frac{n_i}{n_i^2-1}$, so
$\frac{l_1\cdots l_{i-1}}{n_i}\geq c_1$ for some positive constant
$c_1$. The last inequality contradicts the fact that
$\displaystyle \lim_{i\rightarrow\infty}\frac{n_1 l_1\cdots
l_{i-1}}{n_i}=0$, so the proposition follows.
\end{proof}

\begin{corollary}\label{cor2}
Let $\ss_1$, $\ss_2$ be non-finitary diagonal Lie algebras. Assume
that $\ss_1$ is sparse and there is an injective homomorphism of
$\ss_1$ into $\ss_2$. Then $\ss_2$ must be sparse as well.
\end{corollary}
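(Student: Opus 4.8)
The plan is to argue by contradiction and reduce to \refprop{prop3}. Suppose $\ss_2$ is not sparse; then $\ss_2$ is pure or dense, and it is non-finitary by hypothesis. I will manufacture, out of the given embedding $\ss_1\hookrightarrow\ss_2$, a non-trivial homomorphism from a sparse one-sided Lie algebra of type $A$ not isomorphic to $\sl(\infty)$ into a pure one-sided Lie algebra of type $A$ -- which \refprop{prop3} forbids.

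\textbf{Step 1 (push $\ss_2$ into a pure one-sided type-$A$ algebra).} Put $S_2=\mathrm{Stz}(\Cal S_2)$, infinite since $\ss_2$ is non-finitary, and set $\pp:=\sl(S_2\cdot 2^\infty)$, which is a pure one-sided Lie algebra of type $A$. Taking $\ss_2$ as source and $\pp$ as target, one has $\mathrm{GCD}(S_2,S_2\cdot 2^\infty)=S_2$, so the corresponding quotient Steinitz numbers are $R_1=1$ (finite) and $R_2=2^\infty$ (infinite); both algebras are non-sparse and non-finitary. Hence part (iii) of \refle{lemma1} (if $\ss_2$ is of type $A$) or of \refle{lemma2} (if $\ss_2$ is of type $O$ or $C$, with the admissible pair $(O,A)$ or $(C,A)$) applies and produces an injective homomorphism $\ss_2\hookrightarrow\pp$. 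Inserting the factor $2^\infty$ is exactly what makes the side conditions of part (iii) hold.

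\textbf{Step 2 (find a sparse one-sided type-$A$ algebra inside $\ss_1$).} Because \refprop{prop3} is stated only for type $A$, I first replace $\ss_1$ by such an algebra. Let $S_1=\mathrm{Stz}(\Cal S_1)$, which is infinite, write $S_1=\prod_i s_i$ with $s_i\ge 2$, and build $\mathfrak q:=A(\{(s_i,0,z_i)\})$ with the trivial multiplicities $z_i$ chosen so large that the natural dimensions satisfy $n_{i+1}\ge 2s_in_i$; then $\delta_{i+1}\le\frac12\delta_i$, so $\mathfrak q$ is sparse, and $\mathrm{Stz}(\Cal S_{\mathfrak q})=S_1$, so $\mathfrak q$ is non-finitary and one-sided of type $A$. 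Since $\ss_1$ is sparse, neither algebra is finitary, $\mathrm{Stz}(\Cal S_{\mathfrak q})=S_1$ divides $S_1$ (so $R_1=1$), and the relevant type pairs $(A,A),(A,O),(A,C)$ are admissible, part (iv) of \refle{lemma1} (if $\ss_1$ is of type $A$) or of \refle{lemma2} (if $\ss_1$ is of type $O$ or $C$) yields an injective homomorphism $\mathfrak q\hookrightarrow\ss_1$.

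\textbf{Step 3 (contradiction).} Composing $\mathfrak q\hookrightarrow\ss_1\hookrightarrow\ss_2\hookrightarrow\pp$ gives a non-trivial homomorphism from the sparse one-sided type-$A$ algebra $\mathfrak q$ -- which is not isomorphic to $\sl(\infty)$ since it is non-finitary -- into the pure one-sided type-$A$ algebra $\pp$, contradicting \refprop{prop3}. Hence $\ss_2$ is sparse. The content of the argument is \refprop{prop3}; the only real work here is bookkeeping: choosing $\pp$ and $\mathfrak q$ so that every finiteness hypothesis in \refle{lemma1} and \refle{lemma2} ($R_1,R_2$ finite or infinite as needed, $S$ divisible by no $p^\infty$, etc.) is satisfied or vacuous, and matching the four type cases for $\ss_1$ and $\ss_2$ to the admissible type pairs in those lemmas and in \refprop{prop3}. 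This is the step most likely to contain a hidden pitfall.
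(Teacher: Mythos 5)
Your proof is correct and follows essentially the same route as the paper's: assume $\ss_2$ is non-sparse, sandwich the chain $\mathfrak q\hookrightarrow\ss_1\hookrightarrow\ss_2\hookrightarrow\pp$ with $\mathfrak q$ a sparse one-sided non-finitary type-$A$ algebra produced by \refle{lemma1}/\refle{lemma2}(iv) and $\pp$ a pure one-sided type-$A$ algebra produced by \refle{lemma1}/\refle{lemma2}(iii), then invoke \refprop{prop3}. The only difference is that you spell out the explicit constructions of $\mathfrak q$ and $\pp$ and the type-pair bookkeeping that the paper leaves implicit (note only that if $2^\infty$ already divides $S_2$ then $R_2=1$ rather than $2^\infty$, but then the alternative ``$S$ divisible by $p^\infty$'' clause of part (iii) applies, so the argument still goes through).
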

\begin{proof}
Suppose, on the contrary, that $\ss_2$ is pure or dense.
\refle{lemma2} (iv) implies that there exists a sparse one-sided
Lie algebra of type $A$ $\ss'_1$ which admits an injective
homomorphism into $\ss_1$. By \refle{lemma2} (iii) there exists a
pure one-sided Lie algebra of type $A$ $\ss'_2$ such that $\ss_2$
admits an injective homomorphism into $\ss'_2$. If $\ss_1$ would
admit an injective homomorphism into $\ss_2$, then $\ss'_1$ would
admit an injective homomorphism into $\ss'_2$ through the chain
$\ss'_1\subset\ss_1\subset\ss_2\subset\ss'_2$, which would
contradict \refprop{prop3}. Hence the statement holds.
\end{proof}

\begin{prop}\label{prop4}
Let $\ss_1=A(\Cal T_1)$ and $\ss_2=A(\Cal T_2)$ be pure one-sided
Lie algebras, neither of them finitary. Set $S_i=\mathrm{Stz}(\Cal
S_i)$ for $i=1,2$, and $S=\mathrm{GCD}(S_1,S_2)$. Assume that both
Steinitz numbers $\quotst(S_1,S)$ and $\quotst(S_2,S)$ are finite
and $S$ is not divisible by an infinite power of any prime number.
An injective homomorphism of $\ss_1$ into $\ss_2$ is necessarily
diagonal.
\end{prop}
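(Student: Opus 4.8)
The plan is to realise $\theta$ by a commutative diagram of finite-dimensional pieces, to compute the index of the vertical maps, and to exploit that an irreducible $\sl(N)$-module of small index must be trivial, natural or dual. Since $\ss_1$ and $\ss_2$ are pure one-sided of type $A$, I would first fix exhaustions $\sl(N_1)\subset\sl(N_2)\subset\cdots$ of $\ss_1$ and $\sl(M_1)\subset\sl(M_2)\subset\cdots$ of $\ss_2$ with every inclusion of signature $(\,\cdot\,,0,0)$, so that the products of the horizontal multiplicities recover $S_1$ and $S_2$ (taking, if convenient, $\quotst(S_1,S)\mid N_1$). The given homomorphism then yields, after choosing an increasing sequence $k_i$ with $\theta(\sl(N_i))\subset\sl(M_{k_i})$, a commutative diagram as in \refeq{diag0} with vertical maps $\theta_i\colon\sl(N_i)\to\sl(M_{k_i})$; the goal is to show these are diagonal for large $i$.

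Chasing indices around each square and using \refprop{Dprop}(ii) with \refcor{Dcor} (the horizontal inclusions $\sl(N_1)\to\sl(N_i)$ and $\sl(M_{k_1})\to\sl(M_{k_i})$ have indices $N_i/N_1$ and $M_{k_i}/M_{k_1}$), one obtains
\[
I(\theta_1)\,\frac{M_{k_i}}{M_{k_1}}=\frac{N_i}{N_1}\,I(\theta_i),
\qquad\text{hence}\qquad
I(\theta_i)=A\,\frac{M_{k_i}}{N_i},\quad A:=\frac{N_1\,I(\theta_1)}{M_{k_1}},
\]
with $A$ independent of $i$. The hard part, which I expect to be the main obstacle, is to arrange the diagram so that $M_{k_i}/N_i$ stays bounded, whence $I(\theta_i)\le B$ for a constant $B$. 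This is exactly where the hypotheses enter: since $\quotst(S_1,S)$ and $\quotst(S_2,S)$ are finite and $S$ is not divisible by $p^\infty$, the set $S_1/S_2$ consists of the single rational $\quotst(S_1,S)/\quotst(S_2,S)$, so $S_1\stackrel{\QQ}{\sim}S_2$ and, by \refth{BZh1}, in fact $\ss_1\cong\ss_2$; after reducing to $\ss_1=\ss_2$ and using the same exhaustion in both rows, the point becomes that $\theta$ cannot push $\sl(N_i)$ into an $\sl(N_{i'})$ with $N_{i'}/N_i$ unbounded — the ``equal size'' of the two algebras is precisely what forbids this, and without such a bound a single non-diagonal $\theta_i$ would be perfectly consistent with the displayed identity.

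Granting the bound, I would finish thus. Write $V_{M_{k_i}}\downarrow\sl(N_i)\cong\bigoplus_\lambda t_\lambda F_{N_i}^\lambda$; by \refprop{Dprop}(iii), $I(\theta_i)=\sum_\lambda t_\lambda\,I_{\sl(N_i)}(F_{N_i}^\lambda)$. An irreducible $\sl(N)$-module that is not trivial, natural or dual has dimension at least $\binom N2$ (there is no irreducible module of dimension strictly between $N$ and $\binom N2$), while $\langle\lambda,\lambda+2\rho\rangle_{\sl(N)}\ge\frac{N^2-1}{N}$ for every nonzero dominant $\lambda$ — the minimum being attained exactly on the natural and dual weights — so by \refprop{Dprop}(iv) its index is at least $\frac{\binom N2}{N^2-1}\cdot\frac{N^2-1}{N}=\frac{N-1}{2}$. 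Hence once $i$ is so large that $\frac{N_i-1}{2}>B$, every irreducible constituent of $V_{M_{k_i}}\downarrow\sl(N_i)$ is trivial, natural or dual, i.e.\ $V_{M_{k_i}}\downarrow\sl(N_i)$ has exactly the form required of a diagonal subalgebra, so $\theta_i$ is diagonal; thus $\theta$ is diagonal by definition. (One could instead run the mechanism of \refprop{prop3}: $d(\theta_i)$ is non-increasing, hence eventually a constant $d$, the weights of $H_i$ then take the ``plateau'' shape, and the boundedness of $I(\theta_i)$ forces $d=1$, which with the plateau shape again yields diagonality of $\theta_i$.)
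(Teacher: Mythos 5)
Your proposal reduces everything to bounding $M_{k_i}/N_i$ (equivalently $I(\theta_i)$), acknowledges that this is the crux, and offers only the heuristic that the two algebras have ``equal size.'' This is not merely an unfilled gap: the bound you need is \emph{false}. Even a diagonal injective homomorphism between isomorphic pure one-sided type $A$ algebras can have $I(\theta_i)\to\infty$. For instance, take $\ss_1=\ss_2=\sl(\Pi)$ with $\Pi=p_1p_2p_3\cdots$ (each prime to the first power, so $\Pi$ is infinite but no $p^\infty\mid\Pi$, which satisfies all hypotheses of the Proposition), exhaust both by $\sl(p_1\cdots p_i)$, and define $\theta_i:\sl(p_1\cdots p_i)\to\sl(p_1\cdots p_{k_i})$ of signature $(p_{i+1}\cdots p_{k_i},0,0)$ with, say, $k_i=2i$. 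The squares commute (signatures match on both paths) and all $\theta_i$ are diagonal, yet $I(\theta_i)=p_{i+1}\cdots p_{k_i}\to\infty$. So boundedness of the index is not a consequence of the hypotheses, and the dimension-versus-index inequality you invoke at the end never gets to fire. Your parenthetical alternative has the same defect plus a second one: $d=1$ with the ``plateau'' shape only forces each $\lambda$ to be (a shift of) a fundamental weight, i.e.\ it allows $\Lambda^k V$ for any $k$, not just $V$ or $V^*$, so it does not yield diagonality even granting the index bound. Also, your intermediate claim that the hypotheses force $\ss_1\cong\ss_2$ is not quite right: for pure one-sided algebras $\delta_1=\delta_2=1$, so condition $\Cal A_3$ of \refth{BZh1} requires $R_1=R_2$, which is not assumed (only that both are finite).

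The paper's proof is genuinely different and does not rely on any bound on $I(\theta_i)$. It first shows the integer sequence $d_i=\max_{\lambda\in H_i}(\lambda_1-\lambda_{n_i})$ is nonincreasing (via the Gelfand--Tsetlin rule and \refcor{LRcor}), hence stabilizes. It then chooses exhaustions so that the intermediate prime powers $q$ all exceed $d+1$, and exploits \refprop{LRprop} together with a multinomial-coefficient divisibility argument to show that every non-trivial constituent of $W_j\downarrow\sl(n_i)$ occurs with multiplicity divisible by $q$. This makes $\alpha_i=N_i/(\text{prime-power product})$ a nonincreasing sequence of positive integers, which stabilizes; once it is constant, the unique contributing multiset $[\mu_1,\dots,\mu_q]$ must have multiplicity pattern $\{1,q-1\}$, and the inequality $\nu_1-\nu_{n_i}\le d<q-1$ then forces $q-1$ of the $\mu_j$ to be trivial, pinning $\lambda$ to the natural or conatural weight. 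None of this machinery appears in your sketch, and the index-bounding route you propose cannot replace it.
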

\begin{proof}
Let $S=p_1^{l_1} p_2^{l_2}\cdots$ for the increasing sequence
$\{p_i\}$ of all prime numbers dividing $S$. Denote
$n_i=\frac{S_1}{S}(p_1)^{l_1}\cdots (p_{N+i})^{l_{N+i}}$ for
$i\geq 0$, with integer $N$ to be fixed later. Suppose that there
is an injective homomorphism $\theta:\ss_1\to\ss_2$. Then it is
given by the following commutative diagram:
\begin{equation}\label{diag5}
\xymatrix{
\sl(n_0)\ar[d]_{\theta_0}\ar[r]&\dots\ar[r]&\sl(n_i)\ar[d]_{\theta_i}\ar[r]&\sl(n_{i+1})\ar[d]_{\theta_{i+1}}\ar[r]&\dots\\
\sl(m_0)\ar[r]&\dots\ar[r]&\sl(m_i)\ar[r]&\sl(m_{i+1})\ar[r]&\dots,
}
\end{equation} where $m_i=\frac{S_2}{S} (p_1)^{l_1}\cdots
(p_{N+k_i})^{l_{N+k_i}}$ for $i\geq 0$ for some $k_0,k_1,\dots$.
By possibly shifting the bottom row of the diagram we may assume
that $k_i\geq i+1$ for each $i\geq 0$.

Denote by $W_i$ the natural $\sl(m_i)$-module. Let $H(\phi)$ and
$d(\phi)$ be as in the proof of \refprop{prop2} for an arbitrary
injective homomorphism $\phi$ of finite-dimensional classical
simple Lie algebras of type $A$. Set $H_i=H(\theta_i)$ and
$d_i=d(\theta_i)$ for $i\geq 0$. Similarly to \refeq{eq10} we then
have
$$
W_i\downarrow\sl(n_i)\cong\bigoplus_{\lambda\in
H_i}\underbrace{F_{n_i}^\lambda\oplus\dots\oplus
F_{n_i}^\lambda}_{t_{\lambda,i}},$$ where
$t_{\lambda,i}=\dim\text{Hom}_{\sl(n_i)}(F_{n_i}^\lambda,
W_i\downarrow\sl(n_i))$.

Similarly to the proof of \refprop{prop2}, $\{d_i\}$ is a
decreasing sequence, and therefore $d_i=d$ for $i\geq i_0$. By
choosing $N$ large enough we make $d_i=d$ and $p_{N+i}> d+1$ for
all $i\geq 0$. Take now $0\leq i<j\leq k_i$ and consider the
following piece of the diagram in \refeq{diag5}:
\begin{equation}\label{diag6}
\xymatrix{
\sl(n_i)\ar[d]_{\theta_i}\ar[r]&\dots\ar[r]&\sl(n_j)\ar[d]_{\theta_j}\\
\sl(m_i)\ar[r]&\dots\ar[r]&\sl(m_j).}
\end{equation}
Here the injective homomorphism $\sl(n_i)\rightarrow\sl(n_j)$ is
of signature $(q,0,0)$, where $q=(p_{N+i+1})^{l_{N+i+1}}\cdots
(p_{N+j})^{l_{N+j}}$. Take an arbitrary non-trivial highest weight
$\lambda$ in $H_j$, yielding the $\sl(n_j)$-module
$F_{n_j}^\lambda$. Since $n_j=qn_i$, by \refprop{LRprop} we have
$$
F_{qn_i}^\lambda\downarrow\sl(n_i)\cong\bigoplus_\nu
(\sum_{\mu_1,\dots,\mu_q}c_{\mu_1\dots\mu_q}^\lambda
c_{\mu_1\dots\mu_q}^\nu)F_{n_i}^\nu.$$ Since the coefficients
$c_{\mu_1\dots\mu_q}^\lambda$ and $c_{\mu_1\dots\mu_q}^\nu$ are
independent of the order of $\mu_1,\dots,\mu_q$, we can rewrite
this as
\begin{equation}\label{eq7}
F_{qn_i}^\lambda\downarrow\sl(n_i)\cong\bigoplus_\nu
(\sum_{[\mu_1,\dots,\mu_q]}C_q^{q_1,\dots,q_r}c_{\mu_1\dots\mu_q}^\lambda
c_{\mu_1\dots\mu_q}^\nu)F_{n_i}^\nu.\end{equation} Here
$[\mu_1,\dots,\mu_q]$ denotes the multiset with these elements,
and $q_1,\dots,q_r$ are the corresponding multiplicities, so that
$q_1+\cdots+q_r=q$.

Fix a highest weight $\nu$ such that $F_{n_i}^\nu$ has non-zero
multiplicity in \refeq{eq7} and fix a multiset of integral
dominant weights $[\mu_1,\dots,\mu_q]$ making both generalized
Littlewood-Richardson coefficients $c_{\mu_1\dots\mu_q}^\lambda$
and $c_{\mu_1\dots\mu_q}^\nu$ non-zero. We will show that $q$
divides $C_q^{q_1,\dots,q_r}$ (and hence the contribution from
$[\mu_1,\dots,\mu_q]$ to the multiplicity of $F_{n_i}^\nu$) if the
module $F_{n_i}^\nu$ is non-trivial. Suppose that $p_l$ divides
all $q_1,\dots,q_r$ for some $N+i+1\leq l\leq N+j$. Note that the
$\sl(n_i)$-module $F_{n_i}^{\nu'}$ for $\nu'=\mu_1+\cdots+\mu_q$
also has non-zero multiplicity in \refeq{eq7} because
$c_{\mu_1\dots\mu_q}^{\nu'}\neq 0$. Since all $q_1,\dots,q_r$ are
divisible by $p_l$, we have $\nu'=p_l \mu'$ for some integral
dominant weight $\mu'$. Since $F_{n_i}^{\nu'}$ has non-zero
multiplicity in $W_j$ considered as an $\sl(n_i)$-module using the
path along $\theta_j$ in \refeq{diag6}, and since
$W_j\downarrow\sl(m_i)$ is a direct sum of copies of $W_i$, it
must be that $F_{n_i}^{\nu'}$ has non-zero multiplicity in
$W_i\downarrow\sl(n_i)$, i.e. $\nu'\in H_i$. Since $d_i=d<p_l-1$
we have $p_l>\nu'_1-\nu'_{n_i}=p_l(\mu'_1-\mu'_{n_i})$ which
possible only if $\mu'_1=\mu'_{n_i}$ (equivalently,
$\nu'_1=\nu'_{n_i}$). Therefore $\nu'$ is a trivial highest
weight, and hence all $\mu_1,\dots,\mu_q$ are trivial as well.
Then the coefficient $c_{\mu_1\dots\mu_q}^\nu$ is non-zero only if
$\nu$ is trivial, so $F_{n_i}^\nu$ is the trivial module.

Suppose now that $p_l$ does not divide at least one of
$q_1,\dots,q_r$ for each $l$ such that $N+i\leq l\leq N+j$. A
combinatorial argument shows that
$C_q^{q_1,\dots,q_r}=\frac{q!}{q_1!\cdots q_r!}$ is divisible by
$q$ if each prime divisor of $q$ fails to divide at least one of
$q_1,\dots,q_r$. We thus conclude that each non-trivial
$\sl(n_i)$-module $F_{n_i}^\nu$ with non-zero multiplicity in
\refeq{eq7}, has multiplicity divisible by $q$. As a corollary,
any non-trivial simple constituent of $W_j\downarrow\sl(n_i)$
appears with multiplicity divisible by $q$.

By following the diagram in \refeq{diag6} down $\theta_i$ and then
to the right, we get $\displaystyle W_j\downarrow\sl(n_i)\cong
\frac{m_j}{m_i}\bigoplus_{\nu\in H_i} t_{\nu,i} F_{n_i}^\nu$.
Since $q=(p_{N+i+1})^{l_{N+i+1}}\cdots (p_{N+j})^{l_{N+j}}$ is
relatively prime to
$\frac{m_j}{m_i}=(p_{N+k_i+1})^{l_{N+k_i+1}}\cdots
(p_{N+k_j})^{l_{N+k_j}}$ (as $j\leq k_i$), the commutativity of
the diagram in \refeq{diag6} implies that $t_{\nu,i}$ is divisible
by $q$ for any non-trivial $\nu$ in $H_i$.

Let us introduce a new notation. For an arbitrary injective
homomorphism $\phi:\gg_1\rightarrow\gg_2$ of finite-dimensional
classical simple Lie algebras of type $A$ we denote by $N(\phi)$
the number (counting multiplicities) of simple non-trivial
constituents of the natural representation of $\gg_2$ considered
as a $\gg_1$-module via $\phi$. Then $\displaystyle
N_i:=N(\theta_i)$ is divisible by $q=(p_{N+i+1})^{l_{N+i+1}}\cdots
(p_{N+j})^{l_{N+j}}$ by the above argument. Taking $j=k_i$ we
obtain that $N_i$ is divisible by $(p_{N+i+1})^{l_{N+i+1}}\cdots
(p_{N+k_i})^{l_{N+k_i}}$.

Fix now $j=i+1$ in the diagram in \refeq{diag6}, and let
$\psi:\sl(n_i)\rightarrow\sl(m_{i+1})$ denote the map produced by
this diagram. As shown above, each non-trivial weight $\lambda\in
H_{i+1}$ yields a non-trivial weight in $H(\psi)=H_i$ with
non-zero multiplicity divisible by $(p_{N+i+1})^{l_{N+i+1}}$, and
hence at least $(p_{N+i+1})^{l_{N+i+1}}$. Therefore by following
the diagram to the right and then down $\theta_{i+1}$, we obtain
$N(\psi)\geq (p_{N+i+1})^{l_{N+i+1}}N_{i+1}$. Note also that
equality holds here only if for each non-trivial $\lambda\in
H_{i+1}$ we have $F_{qn_i}^\lambda\downarrow\sl(n_i)\cong q
F_{n_i}^\nu\oplus T$ for a non-trivial $\nu\in H_i$, where $T$ is
a trivial (possibly 0-dimensional) module. Meanwhile, by following
the diagram down $\theta_i$ and to the right we have
$N(\psi)=(p_{N+k_i+1})^{l_{N+k_i+1}}\cdots
(p_{N+k_{i+1}})^{l_{N+k_{i+1}}}N_i$. As a result we obtain the
inequality $(p_{N+k_i+1})^{l_{N+k_i+1}}\cdots
(p_{N+k_{i+1}})^{l_{N+k_{i+1}}}N_i\geq
(p_{N+i+1})^{l_{N+i+1}}N_{i+1}$, i.e. $\alpha_i\geq \alpha_{i+1}$,
where $\alpha_i:=\frac{N_i}{(p_{N+i+1})^{l_{N+i+1}}\cdots
(p_{N+k_i})^{l_{N+k_i}}}$ are integers for $i\geq 0$. Since
$\{\alpha_i\}$ is a decreasing sequence of positive integers it
stabilizes, and by choosing $N$ sufficiently large we can assume
that $\alpha_0=\alpha_1=\alpha_2=\cdots$.

Now take an arbitrary non-trivial $\lambda\in H_{i+1}$. Since
$\alpha_i=\alpha_{i+1}$, the decomposition in \refeq{eq7} becomes
$F_{qn_i}^\lambda\downarrow\sl(n_i)\cong q F_{n_i}^\nu\oplus T$
for some non-trivial $\nu\in H_i$, where $T$ is some trivial
(possibly 0-dimensional) module. Since the contribution from each
multiset $[\mu_1,\dots,\mu_q]$ to the multiplicity of
$F_{n_i}^\nu$ in \refeq{eq7} is divisible by $q$, there exists
exactly one multiset $[\mu_1,\dots,\mu_q]$ making a non-zero
contribution to the multiplicity of $F_{n_i}^\nu$. Moreover, the
fact that $C_q^{q_1,\dots,q_r}c_{\mu_1\dots\mu_q}^\lambda
c_{\mu_1\dots\mu_q}^\nu=q$ together with the fact that $q$ divides
$C_q^{q_1,\dots,q_r}$ implies $C_q^{q_1,\dots,q_r}=q$. It is easy
to check that $\frac{q!}{q_1!\cdots q_r!}=q$ only if $r=2$ and
$\{q_1,q_2\}=\{1,q-1\}$. Then we safely can assume that
$\mu_1=\mu_2=\cdots=\mu_{q-1}$. Since $\nu'=\mu_1+\cdots+\mu_q$ is
a non-trivial weight satisfying $c_{\mu_1\dots\mu_q}^{\nu'}\neq
0$, the module $F_{n_i}^{\nu'}$ also has non-zero multiplicity in
\refeq{eq7}, and therefore $\nu=\nu'$. Hence
$\nu=(q-1)\mu_1+\mu_q$, and since $\nu_1-\nu_{n_i}\leq
d<(p_{N+i+1})^{l_{N+i+1}}-1=q-1$, we immediately get that $\mu_1$
is a trivial weight. Then the only multiset $[\mu_1,\dots,\mu_q]$
making $c_{\mu_1\dots\mu_q}^\lambda$ non-zero has $q-1$ trivial
weights. One can check that this is only possible if $\lambda$ is
either of the form $(c+1,c,\dots,c,c)$ or $(c,c,\dots,c,c+1)$.
Thus, all non-trivial highest weights from $H_{i+1}$ are either
those of the natural or of the conatural representation. This
means precisely that all homomorphisms $\theta_i$ are diagonal.
\end{proof}
\begin{corollary}\label{cor3}
Let $\ss_1=X_1(\Cal T_1)$ and $\ss_2=X_2(\Cal T_2)$ be non-sparse
Lie algebras, neither of them finitary. Set $S_i=\mathrm{Stz}(\Cal
S_i)$, $S=\mathrm{GCD}(S_1,S_2)$, and $R_i=\quotst(S_i,S)$ for
$i=1,2$. Assume that $S$ is not divisible by an infinite power of
any prime number, and that both $R_1$ and $R_2$ are finite. An
injective homomorphism of $\ss_1$ into $\ss_2$ is necessary
diagonal.
\end{corollary}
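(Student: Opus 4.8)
The plan is to sandwich the given inclusion $\ss_1\subset\ss_2$ between two pure one-sided Lie algebras of type $A$, apply \refprop{prop4} to the sandwich, and then descend diagonality with \refcor{diagCor}. Concretely, I will produce non-finitary pure one-sided Lie algebras $\ss'_1,\ss'_2$ of type $A$, with Steinitz numbers $S'_1,S'_2$, together with diagonal inclusions $\ss'_1\subset\ss_1$ and $\ss_2\subset\ss'_2$, such that $S'_1\stackrel{\QQ}{\sim}S_1$ and $S'_2\stackrel{\QQ}{\sim}S_2$, the Steinitz numbers differing only by a finite factor in each case. Since $R_1,R_2$ are finite and $S$ is not divisible by an infinite power of a prime, $S_1=SR_1$ and $S_2=SR_2$ have no prime to an infinite power; hence neither do $S'_1,S'_2$, so $\mathrm{GCD}(S'_1,S'_2)$ has no prime to an infinite power, and $\quotst(S'_1,\mathrm{GCD}(S'_1,S'_2))$ and $\quotst(S'_2,\mathrm{GCD}(S'_1,S'_2))$ are finite. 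Thus the hypotheses of \refprop{prop4} hold for the pair $(\ss'_1,\ss'_2)$.

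The heart of the matter is the following construction, applied to a non-sparse, non-finitary $\gg=X(\Cal T)$ with a fixed exhaustion $\gg=\cup_i\gg_i$, diagonal inclusions of signature $(l_i,r_i,z_i)$, $s_i=l_i+r_i$, and $n_i=\dim(\text{natural }\gg_i\text{-module})$. For a pure one-sided type $A$ \emph{subalgebra}, fix inside each $\gg_i$ the canonical diagonal copy $\hh_i\cong\sl(M_i)$ whose natural $\gg_i$-module restricts to one natural summand plus a trivial one when $X=A$ and $\gg$ is one-sided, and to one natural plus one conatural summand plus a trivial one in all other cases (these exist because $V\oplus V^{\ast}$ carries a symmetric and a skew-symmetric invariant form), with $M_{i+1}=s_iM_i$. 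A direct comparison of signatures shows that $\hh_i\subset\gg_{i+1}$ and $\hh_i\subset\hh_{i+1}\subset\gg_{i+1}$, with $\hh_i\subset\hh_{i+1}$ of signature $(s_i,0,0)$, have the same signature; so by the remark following \refle{BZhLemma} we may realize $\cup_i\hh_i$ as a genuine (diagonal, pure one-sided type $A$) subalgebra of $\gg$. The only constraint is $M_i\le n_i$ (resp.\ $2M_i\le n_i$) for all $i$, which holds once $M_{i_0}$ is chosen small. For a pure one-sided type $A$ \emph{overalgebra}, realize instead each $\gg_i$ inside a copy of $\sl(M_i)$, $M_{i+1}=s_iM_i$, with the natural $\sl(M_i)$-module restricting to $\gg_i$ in the corresponding way; commutativity is checked identically. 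Now the constraint is $M_i\ge c\,n_i$ for a fixed $c>0$, i.e.\ $M_i/n_i$ must stay bounded away from $0$; as $n_i$ is a fixed multiple of $s_1\cdots s_{i-1}/\delta_i'$ with $\delta_i'\rightarrow\delta(\Cal T)>0$ by non-sparseness, this is achieved by taking $M_{i_0}$ large. In either case $\mathrm{Stz}$ of the resulting algebra is $M_{i_0}\,s_1s_2\cdots$ up to a finite factor, hence $\QQ$-equivalent to $\mathrm{Stz}(\Cal S)$ with a finite discrepancy, as needed.

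Applying the construction to $\gg=\ss_1$ (subalgebra version) and to $\gg=\ss_2$ (overalgebra version) yields $\ss'_1\subset\ss_1$ and $\ss_2\subset\ss'_2$ with the properties listed in the first paragraph. The composite injective homomorphism $\ss'_1\subset\ss_1\rightarrow\ss_2\subset\ss'_2$ then satisfies the hypotheses of \refprop{prop4}, so the inclusion $\ss'_1\subset\ss'_2$ is diagonal. Applying \refcor{diagCor} to $\ss'_1\subset\ss_1\subset\ss'_2$ shows that $\ss_1\subset\ss'_2$ is diagonal, and applying \refcor{diagCor} once more to $\ss_1\subset\ss_2\subset\ss'_2$ shows that $\ss_1\subset\ss_2$ is diagonal, which proves the corollary.

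I expect the main obstacle to be the explicit construction in the second paragraph: checking that the prescribed signatures are mutually consistent so that the relevant squares commute, verifying the dimension inequalities case by case ($X=A$ one-sided, $X=A$ two-sided, and $X=O$, $X=C$, the last two being automatically one-sided), and confirming that the Steinitz numbers come out $\QQ$-equivalent to $S_1,S_2$ with only a finite, controllable discrepancy. Non-sparseness is used exactly once and essentially: in the overalgebra construction it prevents the natural modules of $\ss_2$ from outgrowing the partial products $s_1\cdots s_{i-1}$, which is what makes room for $\gg_i$ inside $\sl(M_i)$.
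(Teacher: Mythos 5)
Your strategy is the same as the paper's: sandwich $\ss_1\subset\ss_2$ between pure one-sided type-$A$ Lie algebras $\ss'_1\subset\ss_1$ and $\ss_2\subset\ss'_2$ whose Steinitz numbers differ from $S_1$, $S_2$ only by finite factors, apply \refprop{prop4} to the composite $\ss'_1\subset\ss'_2$, and then descend diagonality to $\ss_1\subset\ss_2$ via two applications of \refcor{diagCor}. The one point of genuine divergence is how the sandwiching algebras are produced. The paper simply sets $\ss'_1=\sl(\quotst(S_1,R'_1))$ and $\ss'_2=\sl(S_2R'_2)$ for suitable finite $R'_1>2\delta_1$, $R'_2>2/\delta_2$, and cites \refle{lemma1}~(i) and \refle{lemma2}~(i),~(ii) to obtain the two embeddings. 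You instead re-derive them from scratch, building nested copies of $\sl(M_i)$ inside each $\gg_i$ (resp.\ around each $\gg_i$) with $M_{i+1}=s_iM_i$, and verifying commutativity by signature comparison together with the remark following \refle{BZhLemma}. Your construction does check out: the signature bookkeeping is consistent in every case ($A$ one-sided, $A$ two-sided, $O$, $C$), the dimension inequalities reduce to choosing $M_{i_0}$ small (resp.\ large, with non-sparseness keeping $M_i/n_i$ bounded below), and the resulting Steinitz numbers are $\QQ$-equivalent to $S_1$, $S_2$ with finite discrepancy, so \refprop{prop4} indeed applies. But this amounts to reproving by hand a special case of the existence lemmas the paper already has; the paper's citation-based route is shorter, while yours is more self-contained and makes the role of non-sparseness in the over-algebra step explicit. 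Both are valid.
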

\begin{proof}
Set $\delta_i=\delta(\Cal T_i)$, $i=1,2$. Denote
$\ss'_1=\sl(\quotst(S_1,R'_1))$, where $R'_1>2\delta_1$ is some
finite divisor of $S_1$, and $\ss'_2=\sl(S_2R'_2)$, where $R'_2$
is finite and $R'_2>\frac{2}{\delta_2}$. Then, by \refle{lemma1}
(i) and \refle{lemma2} (i), (ii), $\ss'_1$ admits an injective
homomorphism into $\ss_1$ and $\ss_2$ admits an injective
homomorphism into $\ss'_2$. Then there exists an injective
homomorphism of $\ss'_1$ into $\ss'_2$ through the chain
$\ss'_1\subset\ss_1\subset\ss_2\subset\ss'_2$ and this
homomorphism is diagonal because the Lie algebras $\ss'_1$ and
$\ss'_2$ satisfy the conditions of \refprop{prop4}. Finally, it
follows from \refcor{diagCor} that the injective homomorphism of
$\ss_1$ into $\ss_2$ has to be diagonal as well.
\end{proof}

\begin{lemma}\label{lemma3}
Let $\ss_1=X_1(\Cal T_1)$ and $\ss_2=X(\Cal T_2)$ be non-sparse
Lie algebras, neither of them finitary. Set $S_i=\mathrm{Stz}(\Cal
S_i)$, $S=\mathrm{GCD}(S_1,S_2)$, $R_i=\quotst(S_i,S)$,
$\delta_i=\delta(\Cal T_i)$, $C_i=\mathrm{Stz}(\Cal C_i)$,
$C=\mathrm{GCD}(C_1,C_2)$, $B_i=\quotst(C_i,C)$, and
$\sigma_i=\sigma(\Cal T_i)$ for $i=1,2$. Assume that $S$ is not
divisible by an infinite power of any prime, and both $R_1$ and
$R_2$ are finite. If $\ss_1$ admits a diagonal injective
homomorphism into $\ss_2$, then the following holds.
\begin{itemize}
\item[(i)] $\frac{R_1}{\delta_1}\leq\frac{R_2}{\delta_2}$. The
inequality is strict if $\ss_1$ is pure and $\ss_2$ is dense.

\item[(ii)] $2\frac{R_1}{\delta_1}\leq\frac{R_2}{\delta_2}$ when
one of the following additional hypotheses holds:
\begin{itemize}
\item $(X_1,X_2)=(A,C)$, $(A,O)$, $(O,C)$, or $(C,O)$; \item
$(X_1,X_2)=(A,A)$, $B_1$ is infinite; \item $(X_1,X_2)=(A,A)$,
$B_1$ is finite, $\ss_1$ is two-sided weakly non-symmetric,
$\ss_2$ is either one-sided or two-sided strongly non-symmetric;
\item $(X_1,X_2)=(A,A)$, both $B_1$ and $B_2$ are finite, $C$ is
not divisible by an infinite power of a prime number, both
$\ss_1$, $\ss_2$ are two-sided strongly non-symmetric, and
$\frac{R_1\sigma_1}{B_1}<\frac{R_2\sigma_2}{B_2}$.
\end{itemize}
Again the inequality is strict if $\ss_1$ is pure and $\ss_2$ is
dense.
\end{itemize}
\end{lemma}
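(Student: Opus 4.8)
\emph{Plan.} The idea is to unwind the hypothesis into signature data and then pass to a limit. By the discussion following \refle{BZhLemma}, a diagonal injective homomorphism $\ss_1\to\ss_2$ is recorded by a commutative diagram of the shape \refeq{diag10} in which each $\theta_i$ is diagonal of signature $(x_i,y_i,m_{k_i}-(x_i+y_i)n_i)$ with $x_i,y_i\in\ZZ_{\geq 0}$ satisfying \refeq{cond4}, \refeq{cond5} and \refeq{cond6} (here $\sl(n_i),\sl(m_{k_i})$ are to be read as the appropriate classical type); after truncating we take this for all $i$, and we may assume $x_i\geq y_i$. As noted at the beginning of the proof of \refle{lemma1}, the quantities $\frac{R_1}{\delta_1},\frac{R_2}{\delta_2},\frac{R_1\sigma_1}{B_1},\frac{R_2\sigma_2}{B_2}$ and the finiteness of $R_i,B_i$ are independent of the chosen exhaustions, so we may pick exhaustions with $R_1\mid n_0$, $R_2\mid m_0$, $S=p_0s_0s_1\cdots$, $S=(m_0/R_2)s'_0s'_1\cdots$, $k_i\to\infty$, where $p_0:=n_0/R_1$; writing $w:=(m_0/R_2)s'_0\cdots s'_{k_0-1}$, both $p_0$ and $w$ divide $S$, every $s_0\cdots s_{i-1}$ divides $S/p_0$, every $s'_{k_0}\cdots s'_{k_i-1}$ divides $S/w$, and every prime exponent of $S$ is finite.

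\emph{Part (i).} Telescoping \refeq{cond4} gives $(x_i+y_i)s_0\cdots s_{i-1}=(x_0+y_0)s'_{k_0}\cdots s'_{k_i-1}$, so $s_0\cdots s_{i-1}\mid(x_0+y_0)s'_{k_0}\cdots s'_{k_i-1}$ for every $i$; comparing $p$-adic valuations and letting $i\to\infty$ (valid since $v_p(S)<\infty$) shows $w/\gcd(w,p_0)\mid x_0+y_0$, hence $p_0(x_0+y_0)\geq w$. Substituting the telescoped identity into \refeq{cond6} and rearranging, $\frac{s_0\cdots s_{i-1}}{n_i}\geq\frac{(x_0+y_0)s'_{k_0}\cdots s'_{k_i-1}}{m_{k_i}}$; multiplying by $n_0$ and letting $i\to\infty$, with $\frac{n_0s_0\cdots s_{i-1}}{n_i}\to\delta_1$ and $\frac{m_0s'_0\cdots s'_{k_i-1}}{m_{k_i}}\to\delta_2$, one gets $\delta_1\geq\frac{n_0(x_0+y_0)}{m_0s'_0\cdots s'_{k_0-1}}\delta_2=\frac{R_1p_0(x_0+y_0)}{R_2w}\delta_2\geq\frac{R_1}{R_2}\delta_2$, i.e. $\frac{R_1}{\delta_1}\leq\frac{R_2}{\delta_2}$. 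If $\ss_1$ is pure the quotient $\frac{n_0s_0\cdots s_{i-1}}{n_i}$ equals $\delta_1$ for large $i$, and if $\ss_2$ is dense the quotient $\frac{m_0s'_0\cdots s'_{k_i-1}}{m_{k_i}}$ strictly exceeds $\delta_2$ for every $i$; together these make the inequality strict.

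\emph{Part (ii).} Here we must upgrade the bound to $p_0(x_0+y_0)\geq 2w$. If $(X_1,X_2)=(A,O)$ or $(A,C)$, the additional condition of \refle{BZhLemma} forces $x_i=y_i$ for all $i$; then $x_0+y_0=2x_0$ and running the divisibility argument with $x_i$ in place of $x_i+y_i$ gives $x_0\geq w/p_0$. If $(X_1,X_2)=(O,C)$ or $(C,O)$, it forces $y_i=0$ and $x_i$ even; writing $x_i=2\bar x_i$, \refeq{cond4} reads $\bar x_is_0\cdots s_{i-1}=\bar x_0s'_{k_0}\cdots s'_{k_i-1}$, and the divisibility argument applied to $\bar x_i$ yields $x_0=2\bar x_0\geq 2w/p_0$. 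For the $(A,A)$ subcases we use \refeq{cond5}: telescoping gives $(x_i-y_i)c_0\cdots c_{i-1}=(x_0-y_0)c'_{k_0}\cdots c'_{k_i-1}$, whence $\frac{x_i-y_i}{x_i+y_i}=\frac{x_0-y_0}{x_0+y_0}\cdot\frac{(\sigma_2)_{k_i}/(\sigma_2)_{k_0}}{(\sigma_1)_i}$ with $(\sigma_1)_i=\frac{c_0\cdots c_{i-1}}{s_0\cdots s_{i-1}}$ and $(\sigma_2)_j=\frac{c'_0\cdots c'_{j-1}}{s'_0\cdots s'_{j-1}}$ the decreasing partial symmetry quotients. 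If $B_1$ is infinite, $c_0c_1\cdots$ is too large to divide a bounded multiple of any divisor of $c'_0c'_1\cdots$, so the telescoped $c$-identity forces $x_0=y_0$; if $\ss_1$ is two-sided weakly non-symmetric ($\sigma_1=0$) while $\ss_2$ is one-sided ($c'_j=s'_j$) or two-sided strongly non-symmetric ($\sigma_2>0$), the last displayed ratio tends to $\infty$ unless $x_0=y_0$, whereas it is always $\leq 1$; in either case $x_i=y_i$ for all $i$ and we conclude as in the $(A,O)$ case. In the final subcase (both $\ss_i$ two-sided strongly non-symmetric, $B_1,B_2$ and the prime exponents of $C$ finite, $\frac{R_1\sigma_1}{B_1}<\frac{R_2\sigma_2}{B_2}$) one plays the $S$-divisibility of \refeq{cond4} and the $C$-divisibility of \refeq{cond5} against the limit constraint $\frac{x_i-y_i}{x_i+y_i}\leq 1$: the strict inequality excludes the extremal case $x_0+y_0=w/\gcd(w,p_0)$ for which (i) is sharp, so $x_0+y_0$ is a proper multiple of $w/\gcd(w,p_0)$, giving $x_0+y_0\geq 2w/p_0$. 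The pure/dense strictness is obtained exactly as in (i).

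The step I expect to be hardest is this last $(A,A)$ subcase: there the extra factor $2$ is not forced structurally (there is no automatic $x_i=y_i$ nor $x_i$ even) and has to be extracted from the joint behaviour of the two divisibility chains coming from \refeq{cond4} and \refeq{cond5} together with the asymptotic bound $\frac{x_i-y_i}{x_i+y_i}\leq 1$ and the finiteness assumptions; as in the proof of \refle{lemma1}(ii) one must also fix the exhaustions so that $\frac{R_1\sigma_1}{B_1}$ and $\frac{R_2\sigma_2}{B_2}$ are genuinely attained, and carefully control how the parity gap $x_i+y_i-(x_i-y_i)$ propagates through the density estimate.
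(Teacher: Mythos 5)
Your plan is structurally parallel to the paper's: unwind the diagonal embedding into the signature relations \refeq{cond4}--\refeq{cond6}, extract a Steinitz-divisibility bound on the signature sum $q_0=x_0+y_0$, and feed it into the density estimate. For part (i) the $p$-adic argument showing $w/\gcd(w,p_0)\mid q_0$ (using that every prime exponent of $S$ is finite) and the passage to the limit $\delta_1\geq\tfrac{R_1p_0q_0}{R_2w}\delta_2$ are correct and match the paper's intermediate inequality $\tfrac{m_0s'_0\cdots s'_{k_i-1}}{R_2m_{k_i}}\leq\tfrac{n_0s_0\cdots s_{i-1}}{R_1n_i}$. For (ii), the mixed-type cases and the subcases ``$B_1$ infinite'' and ``$\ss_1$ weakly non-symmetric, $\ss_2$ one-sided or strongly non-symmetric'' are also handled correctly and by essentially the paper's argument (evenness of each $q_i$, resp.\ $t_i=0$ forced by \refeq{cond5}).

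The gap is in the final $(A,A)$ subcase, which is also where the paper does the real work. You assert that the strict inequality $\tfrac{R_1\sigma_1}{B_1}<\tfrac{R_2\sigma_2}{B_2}$ ``excludes the extremal case $q_0 p_0=w$,'' but you never actually derive the contradiction. The claim that needs proof is: if $t_i>0$ for all $i$ and every prime divides at most finitely many $q_i$ (equivalently $p_0q_0=w$), then $\tfrac{R_1\sigma_1}{B_1}\geq\tfrac{R_2\sigma_2}{B_2}$. The paper obtains this by combining three ingredients that you do not write down: (a) $t_i\leq q_i$ together with the two telescoped chains gives the limit inequality $\tfrac{t_0}{q_0}\cdot\tfrac{s'_0\cdots s'_{k_0-1}}{c'_0\cdots c'_{k_0-1}}\,\sigma_2\leq\sigma_1$; (b) the equality of Steinitz numbers $q_0s'_{k_0}s'_{k_0+1}\cdots=s_0s_1\cdots$ forced by the extremal case yields $\tfrac{R_2}{R_1}=\tfrac{m_0s'_0\cdots s'_{k_0-1}}{q_0n_0}$; (c) the $C$-divisibility from \refeq{cond5}, $c_0c_1\cdots\mid t_0c'_{k_0}c'_{k_0+1}\cdots$, together with the finiteness of the prime exponents of $C$, gives $\tfrac{B_2}{B_1}\geq\tfrac{m_0c'_0\cdots c'_{k_0-1}}{t_0n_0}$. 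Multiplying (b) by the reciprocal of (c) and substituting into (a) produces $\tfrac{\sigma_1}{\sigma_2}\geq\tfrac{R_2B_1}{R_1B_2}$, the required contradiction. Writing ``one plays the divisibility chains against the limit constraint'' is a description of what must be done, not a proof that it can be done; until that computation is supplied the lemma is not established.
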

\begin{proof}
As it was explained in the proof of \refle{lemma1}, we can choose
suitable exhaustions of $\ss_1$ and $\ss_2$.

(i) Assume that $(X_1,X_2)=(A,A)$ (the other cases are analogous).
Let $\ss_1$ be exhausted as
$\sl(n_0)\subset\sl(n_1)\subset\cdots$, each inclusion
$\sl(n_i)\rightarrow\sl(n_{i+1})$ being of signature
$(l_i,r_i,z_i)$, $i\geq 0$ and $\ss_2$ as
$\sl(m_0)\subset\sl(m_1)\subset\cdots$ with
$\sl(m_i)\rightarrow\sl(m_{i+1})$ being of signature
$(l'_i,r'_i,z'_i)$, $i\geq 0$. Moreover, we choose $n_0$ to be
divisible by $R_1$ and $m_0$ to be divisible by $R_2$.

There is a commutative diagram
\begin{equation}\label{diag9}
\xymatrix{
\sl(n_0)\ar[d]_{\theta_0}\ar[r]&\sl(n_1)\ar[d]_{\theta_1}\ar[r]&\dots\ar[r]&\sl(n_i)\ar[d]_{\theta_i}\ar[r]&\dots\\
\sl(m_{k_0})\ar[r]&\sl(m_{k_1})\ar[r]&\dots\ar[r]&\sl(m_{k_i})\ar[r]&\dots,}
\end{equation}
where each injective homomorphism $\theta_i$ is diagonal of
signature $(x_i,y_i,m_{k_i}-(x_i+y_i)n_i)$. Denote $q_i=x_i+y_i$.
Then, using Corollary 2.5 \cite{BZ}, we get
\begin{equation}\label{eq3}q_i s'_{k_i}\cdots s'_{k_j-1}=s_i\cdots
s_{j-1} q_j\text{ for all } j>i\geq 0.\end{equation} Hence $s_i
s_{i+1}\cdots$ divides $q_is'_{k_i}s'_{k_i+1}\cdots$ for $i\geq
0$, so $S_1 m_0 s'_0\cdots s'_{k_i-1}$ divides $q_i S_2 n_0
s_0\cdots s_{i-1}$. Since $S$ is not divisible by an infinite
power of any prime number, the first Steinitz number will still
divide the second one after cancellation of both of them by $S$.
Therefore $\quotst(q_i R_2 n_0 s_0\cdots s_{i-1},R_1 m_0
s'_0\cdots s'_{k_i-1})$ is a Steinitz number which is moreover
finite, and thus it is a positive integer. So $\frac{m_0
s'_0\cdots s'_{k_i-1}}{R_2 m_{k_i}}\leq\frac{n_0 s_0\cdots
s_{i-1}}{R_1 n_i}$. Taking the limit of both sides for
$i\rightarrow\infty$ we get
$\frac{\delta_2}{R_2}\leq\frac{\delta_1}{R_1}$. Moreover, if
$\ss_1$ is pure and $\ss_2$ is dense, then $\frac{m_0 s'_0\cdots
s'_{k_i-1}}{R_2 m_{k_i}}\leq\frac{\delta_1}{R_1}$ for large enough
$i$. But the decreasing sequence $\frac{m_0 s'_0\cdots
s'_{k_i-1}}{m_{k_i}}$ does not stabilize, so we obtain the strict
inequality $\frac{\delta_2}{R_2}<\frac{\delta_1}{R_1}$.

(ii) We keep the notations from (i). The injective homomorphism of
$\ss_1$ into $\ss_2$ is given again by \refeq{diag9}. If the pair
$(X_1,X_2)$ is one of $(A,C)$, $(A,O)$, $(O,C)$, and $(C,O)$,
then, by Proposition 2.3 \cite{BZ}, for any diagonal injective
homomorphism of a type $X_1$ algebra into a type $X_2$ algebra of
signature $(l,r,z)$ the integer $l+r$ is even. Therefore $q_j$ is
divisible by 2 for any $j$ and it follows from \refeq{eq3} that
$q_is'_{k_i}s'_{k_i+1}\cdots$ is divisible by $2s_i
s_{i+1}\cdots$. The rest of the proof is analogous to (i).

In the other three cases both $\ss_1$ and $\ss_2$ are of type $A$.
Notice that neither $\ss_1$ nor $\ss_2$ is two-sided symmetric
(otherwise $S$ would be divisible by $2^\infty$). Thus we can
assume that $c_i>0$ and $c'_i>0$ for all $i\geq 0$. Denote
$t_i=x_i-y_i$. It is enough to prove that $t_i=0$ for infinitely
many $i$ ( because then $q_i$ is even for infinitely many $i$ and
the statement can be proven similarly to the first case). Assume
the contrary, i.e. let $t_i>0$ for $i\geq i_0$. Without loss of
generality we can assume that $t_i>0$ for all $i\geq 0$. Let us
show that this contradicts with the assumptions of the lemma in
all three cases.

Let $B_1$ be infinite. By Corollary 2.5 in \cite{BZ},
\begin{equation}\label{eq4}t_0 c'_{k_0}\cdots c'_{k_i-1}=c_0\cdots
c_{i-1}t_i\text{ for }i\geq 1.\end{equation} Then clearly $c_0
c_1\cdots$ divides $t_0 c'_{k_0}c'_{k_0+1}\cdots$, and therefore
$B_1$ divides $n_0 t_0$. This contradicts $B_1$ being infinite.

For the next case, combining \refeq{eq3} and \refeq{eq4}, we
obtain $\frac{t_0}{q_0}\cdot\frac{c'_{k_0}\cdots
c'_{k_i-1}}{s'_{k_0}\cdots
s'_{k_i-1}}=\frac{t_i}{q_i}\cdot\frac{c_0\cdots c_{i-1}}{s_0\cdots
s_{i-1}}$. By definition
$\displaystyle\sigma_1=\lim_{i\rightarrow\infty}\frac{c_0\cdots
c_i}{s_0\cdots s_i}$, and since $\ss_1$ is two-sided weakly
non-symmetric we have
$\displaystyle\lim_{i\rightarrow\infty}\frac{t_i}{q_i}\frac{c_0\cdots
c_i}{s_0\cdots s_i}=0$. But
$\displaystyle\lim_{i\rightarrow\infty}\frac{t_0}{q_0}\cdot\frac{c'_{k_0}\cdots
c'_{k_i-1}}{s'_{k_0}\cdots s'_{k_i-1}}=u\sigma_2$, where
$u=\frac{t_0 s'_0\cdots s'_{k_0-1}}{q_0 c'_0\cdots c'_{k_0-1}}>0$.
So $\sigma_2=0$, contradicting $\ss_2$ being not two-sided weakly
non-symmetric.

Finally, let both $\ss_1$ and $\ss_2$ be two-sided strongly
non-symmetric. Since $t_i\leq q_i$ for $i\geq 0$, we have
$\frac{t_0}{q_0}\cdot\frac{c'_{k_0}\cdots
c'_{k_i-1}}{s'_{k_0}\cdots s'_{k_i-1}}\leq\frac{c_0\cdots
c_{i-1}}{s_0\cdots s_{i-1}}$. Taking the limit we obtain
\begin{equation}\label{eq5}\frac{t_0}{q_0}\cdot\frac{s'_0\cdots
s'_{k_0-1}}{c'_0\cdots
c'_{k_0-1}}\sigma_2\leq\sigma_1.\end{equation} Let us go back to
\refeq{eq3}. We know that $q_0 s'_{k_0}\cdots s'_{k_i-1}=s_0\cdots
s_{i-1}q_i$. If $q_i$ is divisible by some prime number $p$ for
infinitely many $i$, then by an argument similar to that in (i)
one derives the inequality
$p\frac{R_1}{\delta_1}\leq\frac{R_2}{\delta_2}$, from which the
statement follows. So we can assume that every $p$ divides at most
finitely many $q_i$. Then it is easy to see that the Steinitz
numbers $q_0 s'_{k_0}s'_{k_0+1}\cdots$ and $s_0 s_1\cdots$ have
equal values at every prime $p$, so they coincide. Hence,
\begin{equation}\label{eq6} \frac{R_2}{R_1}=\frac{m_0 s'_0\cdots
s'_{k_0-1}}{q_0 n_0}.\end{equation} From \refeq{eq4} $c_0
c_1\cdots$ divides $t_0 c'_{k_0}c'_{k_0+1}\cdots$, and therefore
$\frac{B_2}{B_1}\geq\frac{m_0 c'_0\cdots c'_{k_0-1}}{t_0 n_0}$.
Combining the latter inequality with \refeq{eq5} and \refeq{eq6}
we obtain $\frac{\sigma_1}{\sigma_2}\geq\frac{R_2 B_1}{R_1 B_2}$,
which contradicts an assumption in the statement of the lemma.
\end{proof}

We are now able to prove the main result of the paper.
\begin{theo}\label{Theorem}
\begin{itemize}
\item[a)] The three finitary Lie algebras $\sl(\infty)$,
$\so(\infty)$, $\sp(\infty)$ admit an injective homomorphism into
any infinite-dimensional diagonal Lie algebra. An
infinite-dimensional non-finitary diagonal Lie algebra admits no
injective homomorphism into $\sl(\infty)$, $\so(\infty)$,
$\sp(\infty)$.

\item[b)] Let $\ss_1=X_1(\Cal T_1)$, $\ss_2=X_2(\Cal T_2)$ be
infinite-dimensional non-finitary diagonal Lie algebras. Set
$S_i=\mathrm{Stz}(\Cal S_i)$, $S=\mathrm{GCD}(S_1,S_2)$,
$R_i=\quotst(S_i,S)$, $\delta_i=\delta(\Cal T_i)$,
$C_i=\mathrm{Stz}(\Cal C_i)$, $C=\mathrm{GCD}(C_1,C_2)$,
$B_i=\quotst(C_i,C)$, and $\sigma_i=\sigma(\Cal T_i)$ for $i=1,2$.
Then $\ss_1$ admits an injective homomorphism into $\ss_2$ if and
only if the following conditions hold.

\begin{itemize}
\item[1)] $R_1$ is finite.

\item[2)] $\ss_2$ is sparse if $\ss_1$ is sparse.

\item[3)] If $\ss_1$ and $\ss_2$ are non-sparse, both $R_1$ and
$R_2$ are finite, and $S$ is not divisible by an infinite power of
any prime number, then
$\epsilon\frac{R_1}{\delta_1}\leq\frac{R_2}{\delta_2}$ for
$\epsilon$ as specified below. The inequality is strict if $\ss_1$
is pure and $\ss_2$ is dense. We have $\epsilon=2$, except in the
cases listed below, in which $\epsilon = 1$:
\begin{itemize}
\item[3.1)] $(X_1,X_2)=(C,C)$, $(O,O)$, $(C,A)$, $(O,A)$, and
$(X_1,X_2)=(A,A)$ with both $\ss_1$ and $\ss_2$ being one-sided;

\item[3.2)] $(X_1,X_2)=(A,A)$, $B_1$ is finite, either $\ss_1$ is
one-sided and $\ss_2$ is two-sided non-symmetric or $\ss_2$ is
two-sided weakly non-symmetric and $\ss_1$ is two-sided
non-symmetric;

\item[3.3)] $(X_1,X_2)=(A,A)$, $B_1$ is finite, both $\ss_1$ and
$\ss_2$ are two-sided strongly non-symmetric, either $B_2$ is
infinite or $C$ is divisible by an infinite power of any prime
number;

\item[3.4)] $(X_1,X_2)=(A,A)$, both $B_1$ and $B_2$ are finite,
both $\ss_1$ and $\ss_2$ are two-sided strongly non-symmetric, $C$
is not divisible by an infinite power of any prime number, and
$\frac{R_1\sigma_1}{B_1}\geq\frac{R_2\sigma_2}{B_2}$.
\end{itemize}
\end{itemize}
\end{itemize}
\end{theo}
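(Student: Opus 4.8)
The plan is to assemble the theorem from the results proved above; no new construction is needed. For a), the first assertion is exactly \refcor{cor1}. For the second, suppose a non-finitary diagonal Lie algebra $\ss$ admitted an injective homomorphism into one of $\sl(\infty)$, $\so(\infty)$, $\sp(\infty)$; since the latter two lie in $\sl(\infty)$, we may take the target to be $\sl(\infty)$. Applying \refprop{prop2} with $\ss_1=\ss$ and $\ss_2=\sl(\infty)$ gives $S_1\mid S_2N$ for some $N\in\ZZ_{>0}$; but $\sl(\infty)$ is finitary, so by the Remark after \refth{BZh2} $S_2$ is finite, hence so is $S_1$, and $\ss$ is finitary --- a contradiction.

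For the necessity half of b), assume $\ss_1$ embeds in $\ss_2$. Condition 1) follows from \refprop{prop2}: from $S_1\mid S_2N$ one sees at every prime $p$ that $v_p(R_1)=\max(0,v_p(S_1)-v_p(S_2))\le v_p(N)$, so $R_1\mid N$ is finite. Condition 2) is \refcor{cor2}. For 3), under its hypotheses \refcor{cor3} shows the embedding is diagonal, so \refle{lemma3} applies: its part (i) always yields $\frac{R_1}{\delta_1}\le\frac{R_2}{\delta_2}$, strict if $\ss_1$ is pure and $\ss_2$ is dense, which is exactly the required statement in the cases 3.1)--3.4) where $\epsilon=1$; in every remaining configuration one checks that one of the hypothesis bullets of \refle{lemma3}(ii) is satisfied, giving $2\frac{R_1}{\delta_1}\le\frac{R_2}{\delta_2}$. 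Here one uses, for instance, that a two-sided symmetric $\ss_2$ forces $2^\infty\mid S_2$ and hence --- as $S$ is not divisible by $2^\infty$ --- makes $R_2$ infinite, so that subcase is already excluded; that $X_1\in\{C,O\}$ forces $\ss_1$ one-sided; and that when $\ss_1$ and $\ss_2$ are both one-sided one has $B_1=R_1$.

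For sufficiency, assume 1)--3) and argue by cases on $\ss_2$. If $\ss_2$ is sparse, then \refle{lemma1}(iv) (when $X_1=X_2$) or \refle{lemma2}(iv) (for the six mixed pairs) yields an embedding using only the finiteness of $R_1$. If $\ss_2$ is non-sparse, then 2) forces $\ss_1$ non-sparse as well; if in addition $R_2$ is infinite or $S$ is divisible by an infinite power of a prime, \refle{lemma1}(iii) or \refle{lemma2}(iii) applies. In the remaining case condition 3) gives $\epsilon\frac{R_1}{\delta_1}\le\frac{R_2}{\delta_2}$, strict if $\ss_1$ is pure and $\ss_2$ dense; if the configuration lies in 3.1)--3.4) the matching bullet in the hypothesis list of \refle{lemma1}(ii) or \refle{lemma2}(ii) holds, and that lemma --- whose ``unless $\ss_1$ pure and $\ss_2$ dense'' clause matches the strictness just noted --- supplies the embedding, while otherwise \refle{lemma1}(i) or \refle{lemma2}(i) does; the $O$ and $C$ cases reduce to the one-sided type $A$ arguments together with the parity constraints of \refle{BZhLemma}, exactly as in the proofs of those lemmas.

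The only real work lies in the last two paragraphs: one must verify that, within the regime ``$\ss_1,\ss_2$ non-sparse, $R_1,R_2$ finite, $S$ not divisible by an infinite power of a prime'', the list 3.1)--3.4) is exactly the complement of the union of the hypotheses forcing $\epsilon=2$ in \refle{lemma3}(ii), and simultaneously coincides with the union of hypotheses under which \refle{lemma1}(ii) or \refle{lemma2}(ii) produces an embedding with $\epsilon=1$. I expect this matching --- tracking which type combinations and which finiteness and symmetry hypotheses force each of $R_2$, $B_1$, $B_2$, $C$ to be finite or infinite --- to be the principal obstacle; the rest is a direct appeal to the cited lemmas.
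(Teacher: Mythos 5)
Your proposal matches the paper's own proof: part a) is deduced from Corollary \ref{cor1} and Proposition \ref{prop2}, the sufficiency in b) from Lemmas \ref{lemma1} and \ref{lemma2}, the necessity of conditions 1) and 2) from Proposition \ref{prop2} and Corollary \ref{cor2}, and the necessity of condition 3) from Corollary \ref{cor3} followed by Lemma \ref{lemma3}, with the case-matching between the list 3.1)--3.4) and the hypothesis bullets of the lemmas left as a verification exactly as the paper does. Your few added observations (that $B_1=R_1$ in the one-sided/one-sided case, and that two-sided symmetry of $\ss_2$ forces $R_2$ infinite and so is excluded) are correct and merely make explicit what the paper calls ``easy to check.''
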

\begin{proof}
a) The statement follows directly from \refcor{cor1} and
\refprop{prop2}.

b) The sufficiency of the conditions follows directly from
\refle{lemma1} and \refle{lemma2}.

The necessity of conditions 1 and 2 follows from \refprop{prop2}
and \refcor{cor2} respectively. Let us prove the necessity of
condition 3. Note that the assumptions of this condition satisfy
\refcor{cor3}. Hence in this case an injective homomorphism of
$\ss_1$ into $\ss_2$, if it exists, has to be diagonal. Therefore
we can apply \refle{lemma3} and this lemma implies the necessity
of condition 3 (it is easy to check that under corresponding
assumptions the cases which are not listed in 3.1$-$3.4 are
exactly the cases listed in \refle{lemma3} (ii)).
\end{proof}

The following corollary gives a description of equivalence classes
of diagonal Lie algebras with respect to the equivalence relation
introduced earlier in this paper.
\begin{corollary}\label{Corollary1}
\begin{itemize}
\item[a)] The three finitary Lie algebras $\sl(\infty)$,
$\so(\infty)$, and $\sp(\infty)$ are pairwise equivalent. None of
them is equivalent to any non-finitary diagonal Lie algebra.

\item[b)] Let $\ss_1=X_1(\Cal T_1)$ and $\ss_2=X_2(\Cal T_2)$ be
infinite-dimensional non-finitary diagonal Lie algebras. Set
$S_i=\mathrm{Stz}(\Cal S_i)$, $S=\mathrm{GCD}(S_1,S_2)$,
$R_i=\quotst(S_i,S)$, $\delta_i=\delta(\Cal T_i)$,
$C_i=\mathrm{Stz}(\Cal C_i)$, $C=\mathrm{GCD}(C_1,C_2)$,
$B_i=\quotst(C_i,C)$, and $\sigma_i=\sigma(\Cal T_i)$ for $i=1,2$.
Then $\ss_1$ is equivalent to $\ss_2$ if and only if the following
conditions hold.
\begin{itemize}
\item[1)] $S_1\stackrel{\QQ}{\sim}S_2$.

\item[2)] Both $\ss_1$ and $\ss_2$ are either sparse or
non-sparse.

\item[3)] If $\ss_1$ and $\ss_2$ are non-sparse and $S$ is not
divisible by an infinite power of any prime number, then:
\begin{itemize}
\item[3.1)] $\frac{R_1}{\delta_1}=\frac{R_2}{\delta_2}$;

\item[3.2)] $\ss_1$ and $\ss_2$ have the same density type;

\item[3.3)] $\ss_1$ and $\ss_2$ are of the same type ($X_1=X_2$);

\item[3.4)] $\ss_1$ and $\ss_2$ have the same symmetry type;

\item[3.5)] $C_1\stackrel{\QQ}{\sim}C_2$ if $\ss_1$ and $\ss_2$
are two-sided non-symmetric;

\item[3.6)] $\frac{R_1\sigma_1}{B_1}=\frac{R_2\sigma_2}{B_2}$ if
$\ss_1$ and $\ss_2$ are two-sided strongly non-symmetric and $C$
is not divisible by an infinite power of any prime number.
\end{itemize}
\end{itemize}
\end{itemize}
\end{corollary}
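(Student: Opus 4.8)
The plan is to deduce Corollary~\ref{Corollary1} from \refth{Theorem} by applying the latter in both directions: $\ss_1\sim\ss_2$ means exactly that there are injective homomorphisms $\ss_1\to\ss_2$ and $\ss_2\to\ss_1$, so the criterion of \refth{Theorem} must hold once with $(\ss_1,\ss_2)$ and once with $(\ss_2,\ss_1)$ in the roles of source and target, and conversely these two sets of conditions together produce the two homomorphisms. Part a) is immediate from \refth{Theorem} a): each of $\sl(\infty),\so(\infty),\sp(\infty)$ is an infinite-dimensional diagonal Lie algebra, so any one of them injects into any other and the three are pairwise equivalent; while no non-finitary diagonal Lie algebra injects into any of them, so none of the three is equivalent to such an algebra.

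For part b) I would first match up conditions 1) and 2). The key elementary fact is that, for Steinitz numbers with $S=\mathrm{GCD}(S_1,S_2)$ and $R_i=\quotst(S_i,S)$, the numbers $R_1$ and $R_2$ are both finite if and only if $S_1\stackrel{\QQ}{\sim}S_2$; so the two instances of condition 1) of \refth{Theorem} amount to condition 1) of the corollary. Similarly, ``$\ss_2$ is sparse whenever $\ss_1$ is'' together with its mirror statement is exactly ``$\ss_1$ and $\ss_2$ are both sparse or both non-sparse'', which is condition 2). If both algebras are sparse, or if $S$ is divisible by an infinite power of a prime, then condition 3) of \refth{Theorem} is vacuous in either direction and conditions 3.1)--3.6) of the corollary are vacuous too, so there is nothing more to prove in these cases.

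It remains to handle the case where $\ss_1,\ss_2$ are non-sparse and $S$ is not divisible by an infinite power of any prime; then $\delta_1,\delta_2>0$, and by (the proof of) \refle{lemma1} the quantities $\frac{R_i}{\delta_i}$ and $\frac{R_i\sigma_i}{B_i}$ are well defined. Applying condition 3) of \refth{Theorem} in both directions gives $\epsilon_{12}\frac{R_1}{\delta_1}\le\frac{R_2}{\delta_2}$ and $\epsilon_{21}\frac{R_2}{\delta_2}\le\frac{R_1}{\delta_1}$ with $\epsilon_{12},\epsilon_{21}\in\{1,2\}$; since $\frac{R_i}{\delta_i}>0$, multiplying forces $\epsilon_{12}=\epsilon_{21}=1$ and $\frac{R_1}{\delta_1}=\frac{R_2}{\delta_2}$ (condition 3.1), and it also rules out the strict-inequality situation, so $\ss_1$ and $\ss_2$ have the same density type (condition 3.2). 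The crux is then to show that ``$\epsilon_{12}=\epsilon_{21}=1$'' is equivalent to ``$X_1=X_2$, $\ss_1$ and $\ss_2$ have the same symmetry type, and conditions 3.5)--3.6) hold'': one runs through the list 3.1)--3.4) of $\epsilon=1$ cases in \refth{Theorem} for each direction and records which pairs of cases are simultaneously admissible. If $X_1\ne X_2$ then one of the two directions falls outside the list, forcing $X_1=X_2$ (condition 3.3); if $X_1=X_2=A$ but the symmetry types differ, again one direction gives $\epsilon=2$, forcing the same symmetry type (condition 3.4); and in the remaining $(A,A)$ subcases with equal symmetry type the admissible pairs of cases pin down the finiteness of $B_1$ and $B_2$ (equivalently $C_1\stackrel{\QQ}{\sim}C_2$, condition 3.5) and, when both algebras are two-sided strongly non-symmetric with $C$ not divisible by an infinite prime power, the two opposite inequalities between $\frac{R_1\sigma_1}{B_1}$ and $\frac{R_2\sigma_2}{B_2}$, i.e. their equality (condition 3.6). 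Conversely, granting conditions 1)--3) of the corollary one checks directly that conditions 1)--3) of \refth{Theorem} hold for $\ss_1\to\ss_2$ and for $\ss_2\to\ss_1$, with the relevant $\epsilon$ equal to $1$ in each instance.

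I expect the main obstacle to be this last case analysis over types and symmetry types. It is not conceptually deep but it is delicate, especially in the two-sided strongly non-symmetric case, where one must keep track at once of the finiteness of $B_1$ and of $B_2$, of whether $C$ is divisible by an infinite power of a prime, and of the direction of the inequality between $\frac{R_1\sigma_1}{B_1}$ and $\frac{R_2\sigma_2}{B_2}$, and then verify that the resulting conditions coincide exactly with 3.5)--3.6) --- in particular that condition 3.6) is forced precisely in the range where it is not automatically satisfied.
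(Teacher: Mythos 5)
Your proposal is correct and follows the paper's own argument essentially verbatim: deduce part a) from Theorem~\ref{Theorem} a), match conditions 1)--2) of the corollary with those of Theorem~\ref{Theorem} b) applied in both directions, extract $\epsilon_{12}=\epsilon_{21}=1$ and $\frac{R_1}{\delta_1}=\frac{R_2}{\delta_2}$ (hence the same density type) from the two opposite inequalities, and then reduce conditions 3.3)--3.6) to a case analysis over the $\epsilon=1$ list in Theorem~\ref{Theorem} b) for both orderings of $(\ss_1,\ss_2)$. The paper leaves that final case analysis at the same level of sketch as you do, so there is no gap relative to the published proof.
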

\begin{proof}
a) The statement follows directly from \refth{Theorem} a).

b) To prove sufficiency it is easy to check case by case that all
the conditions of \refth{Theorem} b) are satisfied for both pairs
$\ss_1\subset\ss_2$ and $\ss_2\subset\ss_1$.

Let us prove necessity. Assume that there exist injective
homomorphisms $\ss_1\rightarrow\ss_2$ and $\ss_2\rightarrow\ss_1$.
Conditions 1 and 2 are obviously satisfied. Suppose that $\ss_1$
and $\ss_2$ are both non-sparse and $S$ is not divisible by an
infinite power of any prime number. Then
$\epsilon_1\frac{R_1}{\delta_1}\leq\frac{R_2}{\delta_2}$ and
$\epsilon_2\frac{R_2}{\delta_2}\leq\frac{R_1}{\delta_1}$ by
\refth{Theorem} b). Clearly, this is only possible if
$\epsilon_1=\epsilon_2=1$ and
$\frac{R_1}{\delta_1}=\frac{R_2}{\delta_2}$. Then $\ss_1$ and
$\ss_2$ have the same density type (otherwise one of the
inequalities would be strict). Conditions 3.3$-$3.6 follow from
conditions 3.1$-$3.4 of \refth{Theorem} b) for both pairs
$(\ss_1,\,\ss_2)$ and $(\ss_2,\,\ss_1)$.
\end{proof}

\textbf{Remark.} Isomorphic Lie algebras are clearly equivalent.
If two Lie algebras satisfy \refth{BZh1} (or \refth{BZh2}), then
they satisfy also \refcor{Corollary1}. One can check that
conditions $\Cal A_3$ and $\Cal B_3$ of \refth{BZh1} correspond
respectively to conditions 3.1 and 3.6 of \refcor{Corollary1}.

Let $\mathbb D$ denote the set of equivalence classes of
infinite-dimensional diagonal Lie algebras. If we write
$\ss_1\to\ss_2$ in case there exists an injective homomorphism
from $\ss_1$ into $\ss_2$, then the relation $\to$ is a partial
order on $\mathbb D$. It follows from \refth{Theorem} a) that
$\mathbb D$ has the only minimal element (which also is the least
element) with respect to the order $\to$: the equivalence class
consisting of the three finitary Lie algebras $\sl(\infty)$,
$\so(\infty)$, $\sp(\infty)$. The following statement shows that
there exist precisely one maximal element of $\mathbb D$ (which
also is the greatest element).

\begin{corollary}\label{Corollary2}
Let $\ss=X(\Cal T)$ be a diagonal Lie algebra. Set
$S=\mathrm{Stz}(\Cal S)$. The following are equivalent.

1) Any diagonal Lie algebra admits an injective homomorphism into
$\ss$.

2) $\ss$ is sparse and $S=p_1^\infty p_2^\infty\cdots$, where
$p_1,\,p_2,\dots$ is the increasing sequence of all prime numbers.

\end{corollary}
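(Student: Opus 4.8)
The plan is to prove the two implications separately, assembling results established earlier in the paper.

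For $(2)\Rightarrow(1)$, let $\ss'$ be an arbitrary diagonal Lie algebra. If $\ss'$ is finitary, then it is isomorphic to $\sl(\infty)$, $\so(\infty)$, or $\sp(\infty)$, and \refcor{cor1} already provides an injective homomorphism of $\ss'$ into $\ss$. If $\ss'$ is non-finitary, I would invoke \refth{Theorem} b) with $\ss_1:=\ss'$ and $\ss_2:=\ss$, and check its three conditions. Since $S_2=\mathrm{Stz}(\Cal S)=p_1^\infty p_2^\infty\cdots$ is divisible by every Steinitz number, we get $\mathrm{GCD}(S_1,S_2)=S_1$, hence $R_1=\quotst(S_1,\mathrm{GCD}(S_1,S_2))=1$ is finite, which is condition 1. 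Condition 2 holds because $\ss_2=\ss$ is sparse by hypothesis. Finally, the hypothesis of condition 3 requires $\ss_2$ to be non-sparse, which fails, so condition 3 is vacuously satisfied. Thus \refth{Theorem} b) yields an injective homomorphism $\ss'\rightarrow\ss$, and since $\ss'$ was arbitrary, $(1)$ follows.

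For $(1)\Rightarrow(2)$, I would argue in two steps. First, to see that $S=p_1^\infty p_2^\infty\cdots$: for each prime $p$, the pure one-sided Lie algebra $\sl(p^\infty)=A(\{(p,0,0)\})$ of type $A$ is a non-finitary diagonal Lie algebra, so by $(1)$ it admits an injective homomorphism into $\ss$; \refprop{prop2} then gives $p^\infty\mid S N$ for some $N\in\ZZ_{>0}$, and since the power of $p$ in $N$ is finite, this forces the power of $p$ in $S$ to be infinite. As $p$ is arbitrary, $S=p_1^\infty p_2^\infty\cdots$; in particular $S$ is infinite, so $\ss$ is non-finitary by the Remark following \refth{BZh2}. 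Second, to see that $\ss$ is sparse: a sparse non-finitary diagonal Lie algebra exists, for instance a sparse one-sided Lie algebra of type $A$ not isomorphic to $\sl(\infty)$ as considered in \refprop{prop3}; by $(1)$ it admits an injective homomorphism into $\ss$, and \refcor{cor2} then forces $\ss$ to be sparse. This establishes $(2)$.

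The argument is essentially a bookkeeping exercise built on \refcor{cor1}, \refcor{cor2}, \refprop{prop2}, and \refth{Theorem} a) and b); there is no genuinely hard analytic step here. The only points demanding a little attention are, for $(2)\Rightarrow(1)$, the careful verification that every hypothesis of \refth{Theorem} b) either holds or is vacuous — in particular that condition 3 never has to be examined once one knows $\ss_2$ is sparse — and, for $(1)\Rightarrow(2)$, producing the explicit diagonal Lie algebras (the pure one-sided $\sl(p^\infty)$ for each prime $p$, and one sparse non-finitary algebra) to which \refprop{prop2} and \refcor{cor2} are applied.
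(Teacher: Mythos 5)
Your proof is correct and uses the same underlying ingredients as the paper, but the $(1)\Rightarrow(2)$ direction is organized a little differently. The paper uses a single test algebra: a sparse Lie algebra $\ss'$ of type $A$ with $\mathrm{Stz}(\Cal S')=p_1^\infty p_2^\infty\cdots$, and reads off both conclusions (that $S=p_1^\infty p_2^\infty\cdots$ and that $\ss$ is sparse) from conditions 1 and 2 of \refth{Theorem}~b) applied to the inclusion $\ss'\subset\ss$. You instead use a family of test algebras $\sl(p^\infty)$, one for each prime, together with \refprop{prop2}, to force $p^\infty\mid S$ for every $p$, and then a separate sparse non-finitary test algebra together with \refcor{cor2} to force $\ss$ sparse. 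Since \refth{Theorem}~b) was itself proved from \refprop{prop2} and \refcor{cor2}, the two arguments are essentially equivalent; yours is slightly more elementary in that it bypasses the statement of the main theorem, at the cost of needing several test algebras rather than one. One small merit of your write-up is that you explicitly note $\ss$ must be non-finitary (via the Remark after \refth{BZh2}) before invoking \refcor{cor2}, a point the paper leaves implicit. Your $(2)\Rightarrow(1)$ direction is the same as the paper's, just spelled out in more detail.
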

\begin{proof}
1)$\Rightarrow$2): Consider a Lie algebra $\ss'=A(\Cal T')$, where
$\Cal T'$ is sparse and $\displaystyle\mathrm{Stz}(\Cal
S')=p_1^\infty p_2^\infty\cdots$. Since $\ss'$ admits an injective
homomorphism into $\ss$, the Steinitz number $\quotst(p_1^\infty
p_2^\infty\cdots,S)$ is finite and $\ss$ is sparse by
\refth{Theorem} b). Then clearly $S=p_1^\infty p_2^\infty\cdots$.

2)$\Rightarrow$1): It follows immediately from \refth{Theorem}.
\end{proof}

The equivalence class corresponding to the maximal element of
$\mathbb D$ consists of infinitely many pairwise non-isomorphic
Lie algebras. Indeed, by \refth{BZh1} there is only one, up to
isomorphism, sparse one-sided Lie algebra of type $A$ satisfying
property 2 of \refcor{Corollary2}, but there are infinitely many
sparse two-sided Lie algebras of type $A$ with this property. By
\refth{BZh2}, any Lie algebra of type other than $A$ satisfying
property 2 of \refcor{Corollary2} is isomorphic to the sparse
two-sided symmetric Lie algebra of type $A$ with
$\mathrm{Stz}(\Cal S)=p_1^\infty p_2^\infty\cdots$.

\section{Acknowledgements}

I want to express my gratitude to my thesis adviser Ivan Penkov
for sharing his many ideas with me. This project would never have
been without his guidance and support. I also thank Elizabeth
Dan-Cohen and Ivan Penkov for their invaluable assistance in the
writing of this text.

\end{document}